\def\a{\alpha}
\def\b{\beta}
\def\d{\delta}
\def\g{\gamma}
\def\l{\lambda}
\def\n{\nu}
\def\dd{ {\rm d}}
\def\s{\sigma}
\def\e{\varepsilon}
\def\Om{\Omega}
\newcommand{\cA}{{\mathcal A}}
\newcommand{\cB}{{\mathcal B}}
\newcommand{\cF}{{\mathcal F}}
\newcommand{\cL}{{\mathcal L}}
\newcommand{\cK}{{\mathcal K}}
\newcommand{\cJ}{{\mathcal J}}
\newcommand{\cC}{{\mathcal C}}
\newcommand{\fL}{{\mathfrak L}}
\newcommand{\fM}{{\mathfrak M}}
\newcommand{\p}{\partial}
\newcommand{\R}{{\mathbb R}}
\newcommand{\ov}{\overline}
\newcommand{\dom}{\ensuremath{\operatorname{dom}}}
\newcommand{\weakly}{\ensuremath{\rightharpoonup}}
\newcommand{\mres}{\mathbin{\vrule height 1.6ex depth 0pt width
0.13ex\vrule height 0.13ex depth 0pt width 1.3ex}}
\newcommand{\one}{\mathbbm{1}}
\newcommand{\ds}{\displaystyle}
\newcommand{\diver}{{\rm{div}}}
\newcommand{\dist}{{\rm{dist}}}
\newcommand{\iif}{{\rm{if}}}
\newcommand{\iin}{{\rm{in}}}
\newcommand{\oon}{{\rm{on}}}
\newcommand{\otherwise}{{\rm{otherwise}}}
\newcommand{\spt}{{\rm{spt}}}
\newcommand{\Wdqp}{W_\diamond^{1,q'}}
\newcommand{\Wdqpom}{W_\diamond^{1,q'}(\Omega)}
\newcommand{\Wqom}{W^{1,q}(\Omega)}
\newcommand{\Wqstar}{\left(W^{1,q}\right)^*}
\newcommand{\Wqstarom}{\left(W^{1,q}(\Omega)\right)^*}
\newtheorem{remark}{\textbf{Remark}}[section]
\newtheorem{theorem}{\textbf{Theorem}}[section]
\newtheorem{lemma}[theorem]{\textbf{Lemma}}
\newtheorem{corollary}[theorem]{\textbf{Corollary}}
\newtheorem{proposition}[theorem]{\textbf{Proposition}}
\numberwithin{equation}{section}
\newcommand\numberthis{\addtocounter{equation}{1}\tag{\theequation}}
\journal{Journal de Math\'ematiques Pures et Appliqu\'ees}
\begin{document}

\begin{frontmatter}

\title{A variational approach to second order mean field games with density constraints: the stationary case} 

\author{Alp\'ar Rich\'ard M\'esz\'aros}  
\address{Laboratoire de Math\'ematiques d'Orsay, Universit\'e Paris-Sud, 91405 Orsay, France}
\ead{alpar.meszaros@math.u-psud.fr} 

\author{Francisco J. Silva}
\address{Institut de recherche XLIM-DMI, Facult\'e des sciences et techniques Universit\'e de Limoges, 87060 Limoges, France}
\ead{francisco.silva@unilim.fr}

\begin{abstract}
In this paper we study second order stationary Mean Field Game systems under density constraints on a bounded domain $\Omega \subset \R^d$. We show the existence of weak solutions for power-like Hamiltonians with arbitrary order of growth. Our strategy is a variational one, i.e. we obtain the Mean Field Game system as the optimality condition of a convex optimization problem, which has a solution. 
When the Hamiltonian has  a growth of order $q' \in ]1, d/(d-1)[$, the solution of the optimization problem is continuous which implies that the problem  constraints are qualified. Using this fact and the computation of the subdifferential of a convex functional introduced by Benamou-Brenier (see \cite{BB}), we prove the existence of a solution of the MFG system. In the case where the Hamiltonian has a growth of order $q'\geq d/(d-1)$, the previous arguments do not apply and we prove the existence by means of an approximation argument. 

\vspace{1cm}
\noindent{\bf R\'esum\'e}
\vspace{0.5cm}

\noindent Dans ce papier on \'etudie des syst\`emes de jeux \`a champ moyen sous contrainte de densit\'e sur un domaine born\'e $\Omega \subset \R^d$. On d\'emontre l'existence de solutions faibles pour des hamiltoniens de type puissance avec ordre de croissance arbitraire. Notre strat\'egie est variationnelle, on obtient le syst\`eme de jeux \`a champ moyen comme condition d'optimalit\'e d'un probl\`eme convexe, lequel a une solution. Quand l'hamiltonien a un ordre de croissance $q' \in  {]}1, d/(d-1)[$, la solution du probl\`eme d'optimisation est continue, ce qui implique que les contraintes du probl\`eme sont qualifi\'ees. En utilisant cette propriet\'e et le calcul du sous-diff\'erentiel d'une fonctionnelle convexe introduite par Benamou-Brenier (voir \cite{BB}), on d\'emontre l'existence d'une solution du syst\`eme MFG. Dans les cas o\`u l'hamiltonien a un ordre de croissance $q'\geq d/(d-1)$, les arguments pr\'ec\'edents ne sont pas applicables et on montre l'existence avec un argument d'approximation. 

\end{abstract}

\begin{keyword}
Mean Field Games\sep density constraints\sep variational formulation\sep convex duality methods

\MSC[2010] 49B22\sep 35J47\ 
\end{keyword}

\end{frontmatter}

\section{Introduction}
The theory of Mean Field Games (shortly MFG in the sequel) was introduced recently and simultaneously  by J.-M. Lasry and P.-L. Lions (\cite{LasLio06i,LasLio06ii,LasLio07}) and  M. Huang, R. P. Malham{\'e} and P. E. Caines (see \cite{HuaMalCai}).
 The main objective  of the MFG theory is the study of the limit behavior of Nash equilibria for symmetric differential games with a very large number of ``small'' players. In its simplest form, as the number of players tends to infinity, limits of Nash equilibria can be characterized in terms of the solution of the following coupled PDE system:
%
%
\begin{equation}
\left\{
\begin{array}{rcll}
-\partial_t u(t,x) -\n\Delta u(t,x) + H(x,\nabla u(t,x)) &=& f[m(t)](x)& {\rm{in\ }}(0,T]\times\R^d\\[5pt]
\partial_t m(t,x)-\n\Delta m(t,x) - \diver\left(\nabla_p H(x,\nabla u(t,x))m(t,x)\right) &=& 0 & {\rm{in\ }}(0,T]\times\R^d,\\[5pt]
m(0,x)=m_0, \;\;\; u(T,x) = g(x) & & & {\rm{in\ }}\R^d,
\end{array}
\right. \tag{MFG}\label{MFG}
\end{equation}
%
where $H(x,\cdot)$ is convex. The Hamilton-Jacobi-Bellman (HJB) equation in \eqref{MFG} characterizes the value function $u[m]$ associated to a stochastic optimal control problem solved by a typical player whose cost function depends at each time $t$ on the distribution $m(t,\cdot)$ of the other agents. We remark that this interaction  can be global, e.g. if  $f[m(t,\cdot)](x)$ is a convolution of $m(t,\cdot)$ with another function, or local, i.e. when $f[m(t)](x)$ can be identified to a function $f(x,m(t,x))$. 
 The Fokker-Planck equation (FP) in \eqref{MFG} describes the evolution $m[u]$ of the initial distribution $m_0$ when all the agents follow the optimal feedback strategy computed by the typical agent. We refer the reader to the original papers \cite{LasLio06i,LasLio06ii,LasLio07} and the lectures \cite{curslions} for more details on the modeling  and the relation with the system \eqref{MFG}. See also  \cite{Car_not,GomSau14} for a survey on the subject. 
 
For local couplings $f(\cdot,m)$, system \eqref{MFG} can be obtained (at least formally) as the optimality condition of   problem
\begin{equation}\label{var}
\begin{array}{l}
\min \ds \int_0^T\int_{\R^d} \left\{ m(t,x)L\left(x,-\frac{w(t,x)}{m(t,x)}\right)+F(x,m(t,x))\right\}\,\dd x\dd t+\int_{\R^d}g(x)m(T,x)\,\dd x,\\[6pt]
\mbox{s.t.} \hspace{0.3cm}\partial_t m -\nu \Delta m+\diver(w)=0,\;\;\; m(0,x)=m_0,
\end{array}
\end{equation} 
with $\ds F(x,m):= \int_{0}^{m}f(x,m')\,\dd m'$, $L(x,v):= H^{*}(x,v)$ (where the Fenchel conjugate  $H^{*}(x,v)$ is calculated on the second variable of $H$)  and $m_0\in L^{\infty}(\R^d)$ satisfying that  $m_0\geq 0$ and $\ds\int_{\R^d} m_0\, \dd x=1$.  This type of approach, including  also the degenerate first order case ($\nu=0$),   has been  studied extensively in the last years in a series of papers \cite{Car13, Gra14, CarGra, CarGraPorTon}. The optimization problem above recalls the so-called Benamou-Brenier formulation of the 2-Wasserstein distance between two probability measures, which gives a fluid mechanical or dynamical interpretation of the Monge-Kantorovich optimal transportation problem (see \cite{BB,CarCarNaz}). We refer the reader to \cite{AchCamCap12}, \cite{LacSolTur} and the recent work \cite{BenCar} for some optimization methods to solve numerically \eqref{MFG} based on the formulation \eqref{var}.

With a well-chosen time-averaging procedure, one can introduce stationary MFG systems  as an ergodic limit of time dependent ones (see \cite{CarLasLioPor, CarLasLioPor2}),
\begin{equation}
\left\{
\begin{array}{rcll}
-\n\Delta u(x) + H(x,\nabla u(x)) -\l&=& f(x,m(x))& {\rm{in\ }}\R^d\\[5pt]
-\n\Delta m(x) - \diver\left(\nabla_p H(x,\nabla u(x))m(x)\right) &=& 0 & {\rm{in\ }}\R^d,\\[5pt]
\ds\int_{\R^d}m(x)\,\dd x=1, \;\;\; \int_{\R^d}u(x)\, \dd x= 0, \; \; \; m\geq 0. & & &
\end{array}
\right. \tag{$MFG_{\infty}$}\label{MFGinf}
\end{equation}
At least formally\eqref{MFGinf} can be obtained  as the first order optimality condition of the problem
\begin{equation}\label{var2}
\begin{array}{l}
\min\ds \int_{\R^d} \left\{ m(x)L\left(x,-\frac{w(x)}{m(x)}\right)+F(x,m(x))\right\}\,\dd x,\\[4pt]
\mbox{s.t. } \hspace{0.3cm} -\Delta m+\diver(w)=0,\;\;\; \ds\int_{\R^d}m(x)\,\dd x=1,\;\;\; m\geq 0.
\end{array}
\end{equation}
The existence of smooth solutions  for both evolutive and stationary MFG systems has been obtained in various settings in a series of papers  (see for instance \cite{GomPimSan15, GomPirSan12, GomMit14, GomPatVos14} and the references therein). The used techniques combine variational arguments and sharp PDE estimates. Connections between stationary MFG systems and the so-called Evans-Aronsson problem have been also recently studied in \cite{GomSan14}.

The objective of this work is to rigorously study the optimization problem \eqref{var2} with the additional constraint  $m\le 1$ a.e. Formally this should be linked to a system like\eqref{MFGinf} with {\it $m\le 1$ a.e.} and an additional Lagrange multiplier corresponding to the new constraint.  Moreover, in view of the interpretation of \eqref{MFG} as a continuous Nash equilibria, we expect that our derivation of an MFG system with a density constraint is  linked to symmetric games with a large number of players on which  ``hard congestion'' constraints are imposed. Similar models in the framework of   crowd motion, tumor growth, etc. have been  already studied in the literature (see  for instance \cite{MauRouSan1,MauRouSan2}). In the case of MFG systems, we refer the reader to the papers \cite{BurDiFMarWol} (for evolutive systems) and \cite{GomMit14} (for stationary systems), in which   ``soft-congestion'' effects, meaning that people slow down when they arrive to congested zones, are studied. Let us remark  that in \cite{curslions} it is also explained how to study systems like \eqref{MFGinf} by means of a (degenerate) elliptic equation in space-time. However, this approach with the additional constraint  $m\le 1$ a.e. seems to be ineffective. 

The question of hard congestion effects/density constraints for MFG systems was first raised in \cite{filippo}. More precisely,  in the cited reference the author asks  if a  MFG system can be obtained with the additional constraint that the density of the population does not exceed a given threshold, for instance 1. To the best of our knowledge, this work is the first attempt to investigate this question. The stationary setting plays an important role in our study and we expect   to extend our results to the dynamic case in some future research. 

Let $\Om\subset\R^d$ ($d\ge2$) be a non-empty bounded open set with smooth boundary and such that the Lebesgue measure of $\Omega$ is strictly greater than $1$.   Moreover, let $f:\Om\times\R\to\R$ be a continuous function which is non-decreasing in the second variable and define $\ell_q:\mathbb{R} \times\mathbb{R}^d\to\overline{\R}$ (with $\overline{\R}:= \R\cup \{+\infty\}$) and $\mathcal{L}_q:W^{1,q}(\Omega)\times L^q(\Omega)^d\to\overline{\R}$   as
\begin{equation}\label{definicionfuncionesdebase}\ell_q(a,b):=\left\{
\begin{array}{ll}
\frac{1}{q}\frac{|b|^q}{a^{q-1}}, & \iif\ a>0,\\
0, & \iif\ (a,b)=(0,0),\\
+\infty, & \otherwise.
\end{array}
\right. \hspace{0.3cm} \mathcal{L}_q(m,w):=\int_\Omega \ell_q(m(x),w(x))\, \dd x.\end{equation}
We consider the problem 
\begin{equation}
\begin{array}{l}
\ds\min \hspace{0.4cm} \mathcal{L}_q(m,w)+\int_{\Omega}F(x,m(x))\,\dd x,\\[8pt]
\mbox{s.t. } \hspace{0.3cm} -\Delta m+\diver(w)=0 \; \; \mbox{in $\Omega$}, \; \; \; (\nabla m - w)\cdot n =0 \hspace{0.4cm}  \oon\ \partial\Omega, \\[4pt]
\hspace{2cm} \ds\int_{\Omega}m(x)\,\dd x=1,\;\;\; 0\leq m \leq 1,
\end{array} \tag{$P_q$}\label{Pqin}
\end{equation}
where, as before, $F(x,m)$ is an antiderivative  of $f(x,m)$ with respect to the second variable. We divide our main results in two classes, depending on the value of $q$.\smallskip

{\it Case 1: $q>d$}.  In this case, using the classical direct method of the calculus of variations, we prove the existence of a solution $(m,w)$ of  \eqref{Pqin}. Using that $m\in W^{1,q}(\Omega)\hookrightarrow C(\overline{\Omega})$, we are able to compute the subdifferential of $\mathcal{L}_r(m,w)$ for any $1<r\leq q$. It seems that this type of result is new in the literature. Moreover, the continuity of $m$ allows us to prove that the constraints in  \eqref{Pqin} are qualified (see e.g. \cite[Chapter 2]{BonSha}). Using the computation of the subdifferential with $r=q$ and classical arguments in convex analysis, we derive the existence of $u \in W^{1,s}(\Omega)$ $(s\in [1, d/(d-1)[)$, $\lambda \in \R$ and two nonnegative regular measures $\mu$ and $p$   such that 
\begin{equation}
\left\{
\begin{array}{ll}
-\Delta u + \frac{1}{q'}|\nabla u|^{q'}+\mu - p  - \l=f(x,m), & \iin\ \Om,\\[6pt]
-\Delta m - \diver\left(m |\nabla u|^\frac{2-q}{q-1}\nabla u\right) = 0, & \iin\ \Omega, \\[6pt]
\nabla m\cdot n= \nabla u\cdot n =0, & \oon\ \partial\Omega,\\[6pt]
\ds\int_\Omega m\,\dd x=1,\ \ 0\le m\le 1 , & \iin\ \Omega,\\[8pt]
\mbox{{\rm spt}}(\mu) \subseteq \{m=0\},   \hspace{0.3cm}  \mbox{{\rm spt}}(p) \subseteq \{m=1\}, & \;
\end{array}
\right.\tag{$MFG_{q}$}\label{MFGqin}
\end{equation}
where the system of PDEs is satisfied in the weak sense, `$\mbox{{\rm spt}}$' denotes the support of a measure and $q':=
q/(q-1)$. In the above system, $p$ appears as a Lagrange multiplier associated to the constraint $m\leq 1$ and can be interpreted as a sort of  a ``pressure'' term. We also compute the dual problem associated to   \eqref{Pqin}  recovering \eqref{MFGqin} by duality.  Finally, in the open set $\{ 0 < m< 1\}$ we prove some local regularity results for the pair $(m,u)$.  
%

{\it Case 2: $1<q\leq d$}. In this case, even if the existence of a solution still holds true, $m$ is in general discontinuous, which implies that the arguments employed in the  computation of the subdifferential of  $\mathcal{L}_q(m,w)$ are no longer valid. Moreover,  the discontinuity of $m$ implies that the constraint $0\leq m\leq 1$ is in general not qualified. In order to overcome these issues, we use an approximation argument. By  adding the term $\e \mathcal{L}_r(m,w)$ with $r>d$ to the cost function and using the arguments in {\it Case 1} we obtain a system similar to \eqref{MFGqin} depending on $\e$. Then, by means of some uniform bounds with respect to $\e$ and recent results on estimates on the gradients for solutions of elliptic equations with measure data (see \cite{mingione2}), as $\e \downarrow 0$ we can prove the existence of limit points satisfying \eqref{MFGqin} where the concentration properties for $p$ and $\mu$ have to be understood in a weak sense. \smallskip

The structure of the paper is as follows: in Section \ref{preliminary} we set the basic notations and prove some preliminary results including the computation of the subdifferential of $\mathcal{L}_q(m,w)$. In Section \ref{problemstatement} we define rigorously problem \eqref{Pqin} for the case $q>d$ and we prove the existence of a solution as well as the qualification property of the constraints. 
In Section \ref{optimalityconditions} we characterize the solutions of  \eqref{Pqin} in terms of \eqref{MFGqin} still in the case $q>d$. Moreover, we prove some local regularity results and we derive the dual problem.  The uniqueness of the solutions is also discussed.  In Section \ref{aproximationargument} we complete the proof of the previous statements for any $1<q\le d$ by means of an approximation argument. Finally, in the Appendix we recall some important results about elliptic equations with irregular right hand sides. 
\vspace{10pt}

{\bf Acknowledgement:} Both authors would like to thank  F. Santambrogio for his constant interest and  valuable discussions during the preparation of this paper. They are particularly thankful for his suggestion of the approximation procedure to treat the less regular cases. The first author would like to thank the hospitality of the University of Limoges several times during this project.  He also thanks the hospitality of the Fields Institute, Toronto, in the framework of the {\it Special Semester on Variational Problems in Economics, Physics and Geometry}, Fall 2014.  Both authors thank the partial support of the project iCODE: {\it``Large-scale systems and Smart grids: distributed decision making''} -- {\it ``strategic crowds''}. F. J. Silva  benefited also from the support of the {\it ``FMJH Program Gaspard Monge in optimization and operation research''}, and from the support to this program from EDF.
Last but not least, we thank L. Brasco for the discussions on the Calder\'on-Zygmund theory, directing us towards the references on this topic used in this paper.

\section{Notations and Preliminary Results}\label{preliminary}

We  first fix some standard notation. Let $\Omega \subseteq \R^{d}$ ($d\ge 2$) be a non-empty, bounded open set with a smooth boundary, satisfying a uniform interior ball condition,  and denote by  $n$ the outward normal to $\p\Omega$.  Let us set  $|\cdot|$ for the usual euclidean norm on $\R^d$ and, given a Lebesgue measurable set $A\subseteq \R^d$, if it is not ambiguous,  we also use  $|A|$ for its $d$-dimensional Lebesgue measure.

We denote by $\fM(\ov{\Omega})$  the space of (signed) Radon measures defined on ${\ov\Omega}$. We set $\fM_+(\ov{\Omega})$ (respectively $\fM_-(\ov{\Omega})$) for the subset of   $\fM(\ov{\Omega})$ of   non-negative (respectively non-positive) Radon measures. Given the Hahn-Jordan decomposition $m=m^+- m^-$, with $m^+$, $m^- \in \fM_+(\ov{\Om})$, we set $|m|_{TV}:=m^+(\ov{\Om})+m^-(\ov{\Om})$ for the total variation of $m$.  We also denote by $\fM_{ac}(\ov{\Omega})$ and $\fM_s(\ov{\Omega})$ the spaces of absolutely continuous and singular measures w.r.t. the Lebesgue measure, respectively.  For notational convenience, if  $m\in \fM_{ac}(\ov{\Omega})$ we will also denote by $m$ its density w.r.t. the Lebesgue measure. Given   $\mu \in \fM(\ov{\Omega})$ we set   $\mu\mres A$ for its restriction to  $A\subseteq\Om$, defined as $\mu\mres A(B):=\mu(A\cap B)$ for all $B\in\cB(\Om)$ (where   $\cB(\Om)$ denotes the Borel $\sigma$-algebra on $\Om$). Finally, given $A\subseteq \R^d$, we set $\chi_A(x)=0$ if $x\in A$ and $+\infty$ otherwise. Moreover we set $\one_A(x)=1$ if $x\in A$ and $0$ otherwise.
 
Now, let $q>1$ be given and set  $q':= q/(q-1)$. Consider  the sets 
$$\begin{array}{c}A_{q'}:=\{(a,b)\in\mathbb{R}\times\mathbb{R}^d:a+\frac{1}{q'}|b|^{q'}\le 0\},\\[4pt]
\mathcal{A}_{q'}:=  \left\{(a, b) \in L^{\infty}(\Omega)\times L^{\infty}(\Omega)^{d}, \; \; (a(x), b(x)) \in A_{q'}, \; \; \mbox{{\rm for a.e.}  $x \in \Omega$}\right\},\end{array}$$
and recall the functions $\ell_q$ and  $\mathcal{L}_q$ defined in \eqref{definicionfuncionesdebase}. We have the following result.
\begin{lemma}\label{subdiff_cost} 
Suppose that $q>d$ and  let $ 1<r\leq q$. Then, the following assertions hold true:\smallskip\\
{\rm(i)} The closure of $\mathcal{A}_{r'}$ in $\Wqstarom \times  L^{q'}(\Omega)^d $ is given by 
\begin{equation}\label{expressionovA}\overline{\mathcal{A}_{r'}}= \left\{ (\alpha, \beta) \in \fM(\ov{\Omega}) \times L^{r'}(\Om)^d\; : \; \alpha + \frac{1}{r'} |\beta|^{r'} \leq 0 \right\}, \end{equation}
where the inequality in \eqref{expressionovA} means that for every non-negative $\phi \in C(\overline{\Omega})$ we have that 
\begin{equation}\label{inequalitymeasures} \int_{\ov{\Omega}} \phi(x)\, \dd \alpha(x)+  \frac{1}{r'}  \int_{\Omega} \phi(x)  |\beta(x)|^{r'}\, \dd x \leq 0.\end{equation}
{\rm(ii)}  Restricted to $W^{1,q}(\Omega) \times L^{q}(\Omega)^d$,  the functional $\cL_r$ is convex and l.s.c. Moreover,  for every $(m,w)\in W^{1,q}(\Omega)\times L^q(\Omega)^d,$   it holds that
\begin{equation}\label{expressionsforL}
\begin{array}{ll}
\ds\mathcal{L}_r(m,w)&= \ds\sup_{(\alpha,\b)\in \mathcal{A}_{r'}}   \int_{\Omega} \left[ \a(x) m(x) + \b(x)\cdot w(x) \right]  \,\dd x\\[6pt]
&=  \ds\sup_{(\alpha,\b)\in \overline{\mathcal{A}_{r'}}} \left[\int_{\ov{\Omega}} m(x)\, \dd \alpha(x)+ \int_{\Omega} \beta(x)\cdot w(x)\,  \dd x\right],
\end{array}
\end{equation}
and $\mathcal{L}_r^{*}(\alpha, \beta)= \chi_{\ov{\cA_{r'}}}(\alpha, \beta)$ for all $(\alpha,\beta)\in \Wqstarom \times L^{q'}(\Omega)^{d}$.
\end{lemma}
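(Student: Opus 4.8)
The plan is to build everything on the pointwise Fenchel duality for the integrand $\ell_r$. For $a>0$, $\ell_r(a,\cdot)$ is the perspective of $b\mapsto\frac1r|b|^r$, so a direct computation of the Legendre transform in the finite-dimensional variables $(a,b)\in\R\times\R^d$ gives $\ell_r^*=\chi_{A_{r'}}$ and, by Fenchel--Moreau (since $\ell_r$ is proper, convex and l.s.c.), the support-function identity $\ell_r(a,b)=\sup_{(\a,\b)\in A_{r'}}\{\a a+\b\cdot b\}$, with $A_{r'}$ closed and convex. This is the engine of both parts. I would stress at the outset the two structural facts that make the functional analysis work precisely when $q>d$: the Morrey embedding $\Wqom\hookrightarrow C(\ov\Omega)$ is continuous and \emph{compact}, so that $\fM(\ov\Omega)=C(\ov\Omega)^*$ embeds continuously into $\Wqstarom$ and, crucially, the adjoint of a compact embedding sends weak-$*$ convergent bounded sequences of measures to \emph{norm}-convergent sequences in $\Wqstarom$.

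For the first identity in (ii), the inequality $\ge$ is immediate: for $(\a,\b)\in\cA_{r'}$ one has $\a(x)m(x)+\b(x)\cdot w(x)\le\ell_r(m(x),w(x))$ a.e., and integrating then taking the supremum gives the bound. The reverse inequality is a measurable-selection step: truncating to the compact sets $A_{r'}\cap\ov{B}_N$, the functions $g_N(x):=\sup\{\a m(x)+\b\cdot w(x):(\a,\b)\in A_{r'}\cap\ov{B}_N\}$ are nonnegative (take $(\a,\b)=(0,0)$), measurable, and increase pointwise to $\ell_r(m(x),w(x))$; a measurable selection of the attained maximisers yields bounded, hence $L^\infty$, pairs $(\a_N,\b_N)\in\cA_{r'}$ realising $\int_\Omega g_N$, and monotone convergence gives $\int_\Omega g_N\uparrow\cL_r(m,w)$. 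Convexity of $\cL_r$ is inherited from $\ell_r$, and lower semicontinuity then follows from the representation as a supremum of the continuous linear functionals $(m,w)\mapsto\int_\Omega(\a m+\b\cdot w)$ on $\Wqom\times L^q(\Omega)^d$.

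Part (i) I would prove by double inclusion, and here lies the main work. The inclusion $\ov{\cA_{r'}}\subseteq$ the set in \eqref{expressionovA} is a compactness-and-lower-semicontinuity argument: given $(\a_n,\b_n)\in\cA_{r'}$ converging to $(\a,\b)$ in $\Wqstarom\times L^{q'}(\Omega)^d$, testing the constraint against $\phi\equiv1$ yields $\frac1{r'}\|\b_n\|_{L^{r'}}^{r'}\le-\langle\a_n,1\rangle$, which is bounded; hence $\b_n$ is bounded in the reflexive space $L^{r'}$ while $\a_n\le0$ is bounded in total variation, so along a subsequence $\b_n\weakly\b$ in $L^{r'}$ and $\a_n\weaks\a$ in $\fM(\ov\Omega)$, identifying the limits and showing in particular $\b\in L^{r'}(\Omega)^d$, $\a\in\fM(\ov\Omega)$. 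The inequality \eqref{inequalitymeasures} passes to the limit because, for $\phi\ge0$, the measure term converges while $\b\mapsto\int_\Omega\phi|\b|^{r'}$ is convex, hence weakly l.s.c. \textbf{The hard part is the reverse inclusion}, namely approximating an arbitrary admissible $(\a,\b)$ --- with $\a$ a possibly singular measure charging $\p\Omega$ --- by \emph{bounded} pairs satisfying the constraint \emph{pointwise a.e.} Writing $\gamma:=\a+\tfrac1{r'}|\b|^{r'}\dd x\in\fM_-(\ov\Omega)$, I would first truncate $\b$ to $\b^k:=\b\one_{\{|\b|\le k\}}$ (which only decreases $\tfrac1{r'}|\b|^{r'}$, preserving the constraint) and then mollify: with $\b_\eta:=\b^k*\rho_\eta$ and $\a_\eta:=\gamma*\rho_\eta-\tfrac1{r'}(|\b^k|^{r'})*\rho_\eta$, Jensen's inequality gives $\a_\eta+\tfrac1{r'}|\b_\eta|^{r'}\le\gamma*\rho_\eta\le0$ pointwise, so $(\a_\eta,\b_\eta)\in\cA_{r'}$. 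Now $\b_\eta\to\b^k$ in $L^{r'}\hookrightarrow L^{q'}$, $(|\b^k|^{r'})*\rho_\eta\to|\b^k|^{r'}$ in $L^1\hookrightarrow\Wqstarom$, and --- decisively --- $\gamma*\rho_\eta\weaks\gamma$ in $\fM(\ov\Omega)$ upgrades, by compactness of $\Wqom\hookrightarrow C(\ov\Omega)$, to norm convergence in $\Wqstarom$; letting $\eta\downarrow0$ and then $k\uparrow\infty$ closes the inclusion. The genuinely delicate point is mollifying a measure up to $\p\Omega$ (which $\a$ may charge) so that $\a_\eta,\b_\eta$ stay defined on $\ov\Omega$ while preserving both the pointwise constraint and all convergences; the uniform interior ball condition on $\Omega$ is what lets an inward-translation/extension construction go through.

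Finally, for the second identity and the conjugate in (ii), I would transport the first identity to the closure: for fixed $(m,w)\in\Wqom\times L^q(\Omega)^d$ the map $(\a,\b)\mapsto\langle\a,m\rangle+\int_\Omega\b\cdot w$ is continuous on $\Wqstarom\times L^{q'}(\Omega)^d$, so its suprema over $\cA_{r'}$ and over $\ov{\cA_{r'}}$ coincide, and on $\cA_{r'}$ it equals $\int_\Omega(\a m+\b\cdot w)\dd x$ since there $\a\in L^\infty$ does not charge $\p\Omega$. This exhibits $\cL_r$ as the support function $\sigma_{\ov{\cA_{r'}}}$, equivalently $\cL_r=(\chi_{\ov{\cA_{r'}}})^*$; as $\ov{\cA_{r'}}$ is convex and closed, Fenchel--Moreau gives $\cL_r^*=(\chi_{\ov{\cA_{r'}}})^{**}=\chi_{\ov{\cA_{r'}}}$. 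As a cross-check one may instead compute $\cL_r^*$ directly: feeding $m\equiv c>0$, respectively $m=t\phi_0$ with $\phi_0\ge0$ smooth and $w$ chosen by the pointwise Fenchel equality, shows $\cL_r^*(\a,\b)=+\infty$ unless $\b\in L^{r'}$, $\a$ is a nonpositive measure, and \eqref{inequalitymeasures} holds --- exactly membership in $\ov{\cA_{r'}}$ --- on which set $\cL_r^*$ vanishes.
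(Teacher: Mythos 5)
Your overall strategy coincides with the paper's: part (ii) via truncation of $A_{r'}$ to bounded sets, pointwise maximisation with an explicit measurable selection, and monotone convergence; part (i) by double inclusion with a mollification argument for the harder inclusion; the conjugate identity read off from the support-function representation. Two points of genuine divergence are worth recording. First, for the inclusion $\ov{\cA_{r'}}\subseteq{}$r.h.s.\ of \eqref{expressionovA} you argue by weak compactness of $(\b_n)$ in $L^{r'}$ and weak lower semicontinuity of $\b\mapsto\int_\Omega\phi|\b|^{r'}\dd x$, where the paper passes to a pointwise a.e.\ convergent subsequence and uses Fatou together with an extension theorem to realise $\a$ as a measure; both work, and your explicit remark that it is the \emph{compactness} of $W^{1,q}(\Omega)\hookrightarrow C(\ov\Omega)$ that upgrades weak-$*$ convergence in $\fM(\ov\Omega)$ to norm convergence in $\Wqstarom$ is actually more careful than the paper, which invokes only the continuous embedding at that step. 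Second, and this is where your write-up stops short of a proof: for the reverse inclusion you mollify the single nonpositive measure $\gamma=\a+\frac{1}{r'}|\b|^{r'}\dd x$ with one standard kernel. A plain convolution $\gamma\ast\rho_\eta$ restricted to $\Omega$ \emph{loses mass at $\p\Omega$}: if $\gamma$ charges the boundary (which it may, since $\a\in\fM(\ov\Omega)$), then $\int_\Omega\rho_\eta(y-x)\,\dd y<1$ for $x$ near $\p\Omega$, and the weak-$*$ limit of $\gamma\ast\rho_\eta$ is not $\gamma$. You correctly flag this as the delicate point and gesture at an inward-translation fix, but do not carry it out. The paper's device is to split $\a=\a^{\rm ac}+\a^{\rm s}$, mollify the absolutely continuous part and $\b$ with a cut-off standard mollifier, and approximate the singular part with the \emph{normalised} kernel $\rho^x_\e=\one_{B_\e(x)\cap\ov\Omega}/|B_\e(x)\cap\ov\Omega|$, which integrates to one over $\Omega$ for every $x\in\ov\Omega$ and is bounded thanks to the regularity of $\p\Omega$; this is precisely what prevents the loss of boundary mass. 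Your construction can be completed either by adopting that kernel for $\gamma$ (the Jensen step is unaffected, since it only uses nonpositivity of the mollified $\gamma$) or by pushing $\gamma$ inward before convolving, but as written this step is a sketch rather than an argument.
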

\begin{proof}[Proof of $\rm{(i)}$]
 Let $(\alpha_{n}, \beta_{n})  \in \mathcal{A}_{r'}$ be a sequence converging to some  $(\alpha, \beta)$ in $\Wqstarom \times  L^{q'}(\Omega)^d$. Then, for any non-negative $\phi \in W^{1,q}(\Omega)$ we have that 
$$ \int_{\Omega} \phi(x) \alpha_{n}(x)\,\dd x \leq -\frac{1}{r'} \int_{\Omega}  \phi(x)  |\beta_{n}(x)|^{r'}\, \dd x.$$
Since $\b_n\to \b$ in $L^{q'}(\Om)^d,$ except for some subsequence, $|\b_n(x)|^{r'}\to|\b(x)|^{r'}$ for a.e. $x\in\Om.$ Having positive integrands in the second integral, by Fatou's lemma we obtain
$$ \langle\a,\phi\rangle_{\Wqstar,W^{1,q}}\leq  -\frac{1}{r'} \int_{\Omega}  \phi(x)  |\beta(x)|^{r'} \dd x \hspace{0.3cm} \mbox{for all $\phi \in W^{1,q}(\Omega),\ \phi\ge0$}.$$
In particular, letting $\phi \equiv 1$, we have that  $\b\in L^{r'}(\Om)^d$ and by \cite[Chapitre I, Th\'eor\`eme V]{Schw66} we can extend $\alpha$ to a linear functional over $C(\overline{\Omega})$, i.e. to an element in $ \fM(\ov{\Omega})$, satisfying \eqref{inequalitymeasures}. This proves one inclusion in \eqref{expressionovA}. 

In order to prove the converse inclusion, let  $(\alpha, \beta)$ be an element of the r.h.s. of  \eqref{expressionovA}. Equivalently, 
$$\a^{\rm{ac}}+\frac{1}{r'}|\b|^{r'}\le 0\;\; {\rm{a.e.\ in\ }}\Om\;\;\; {\rm{and}}\ \a^{\rm{s}}\le 0,$$
where $\a^{\rm{ac}}$ and $\a^{\rm{s}}$ denote the absolutely continuous and singular parts of $\a$ with respect to the Lebesgue measure, respectively. We shall construct different approximations for $\a^{\rm{ac}}$ and $\b$ on the one hand and for $\a^{\rm{s}}$ on the other hand.  For $\gamma>0$ and $x\in \R^d$ we set $B_{\gamma}(x)=\{ y\in \R^d \; ; \; |y-x|<\gamma\}$. Consider a mollifier $\eta: \R^d \to \R$ satisfying that $\eta \in C_c^{\infty}(\R^d)$, $\eta\geq 0$, $\ds\int_{\R^d}\eta(x)\,\dd x=1,$ $\spt(\eta)\subseteq B_1(0)$ and $\eta(x)=\eta(-x)$ for all $x\in \R^d$. Now, for $\e>0$ set
$$\Omega_\e:=\{x\in\Omega:\ \dist(x,\partial\Omega)>\e\}, \hspace{0.3cm} \eta_{\e}(x):= \frac{1}{\e^{d}} \eta(x/\e),$$ and for all $x\in \Omega$ and $i=1,..., d$, let us define
$$\tilde\alpha_{\e}(x):= \int_{\Omega} \eta_{\e}(x-y)  \alpha^{ac}(y)\dd y \one_{\Om_\e}(x) , \hspace{0.5cm} \tilde\b_{\e}^{i}(x):=\int_{\Omega} \eta_{\e}(x-y)\beta^{i}(y) \dd y \one_{\Om_\e}(x). $$

By convexity and Jensen's inequality, for all $x \in \Omega_\e$ we have that
\begin{equation}\label{cut}
 \tilde\a_\e(x)+\frac{1}{r'}|\tilde\b_\e(x)|^{r'}\le \left(\a+\frac{1}{r'}|\b|^{r'}\right) \ast \eta_\e (x)\leq 0,
\end{equation}
and so $( \tilde\a_\e,  \tilde\b_\e)\in \mathcal{A}_{r'}$ and one easily checks that $\tilde\a_\e \to \a^{ac}$ in $\fM(\overline{\Omega})$ and $ \tilde\b_\e \to \b$ in $L^{q'}(\Omega)^{d}$. 

In order to approximate $\a^{\rm{s}}$ let us define the following kernel: for $x\in\ov\Om$ and $\e>0$ let us set $\ds\rho^x_\e:= \left(\one_{B_\e(x)\cap\ov\Om}\right) / |B_\e(x)\cap\ov\Om|$. Note that for all $x\in\ov\Om$ we have that $\rho^x_\e \to \d_x$  in $\fM(\overline{\Omega})$ as $\e\downarrow 0$. Given $y\in\ov\Om$ and $\e>0$ let us define 
$$\hat{\a}_\e(y):=\int_{\ov\Om}\rho_\e^x(y)\,\dd\a^{\rm{s}}(x).$$
Observe that for all $\e>0$ the function $\hat{\a}_\e$ is non-positive and, due to our regularity assumption on $\partial \Omega$, we have that  $\hat{\a}_\e\in L^\infty(\Om)$. 
Let us show that $\hat{\a}_\e \to \a^{\rm{s}}$  in $\fM(\overline{\Omega})$. For any $\phi\in C(\ov\Om)$, Fubini's theorem yields
\begin{align*}
\int_{ \Om}\phi(y)\hat{\a}_\e(y)\,\dd y&=\int_{ \Om}\phi(y)\int_{\ov\Om}\rho_\e^x(y)\,\dd\a^{\rm{s}}(x)\,\dd y=\int_{\ov\Om}\int_{ \Om}\phi(y)\rho_\e^x(y)\,\dd y\,\dd\a^{\rm{s}}(x)\\[5pt]
&\to\int_{\ov\Om}\phi(x)\,\dd\a^{\rm{s}}(x)\;\; {\rm{as}}\ \e\downarrow 0,
\end{align*}
where we have  used  that $\phi$ is uniformly continuous in $\ov{\Omega}$ (since this set is compact) and so 
$$\int_{ \Om}\phi(y)\rho_\e^x(y)\,\dd y\to\phi(x)\;\;\; \mbox{\rm{uniformly} in $\ov{\Om}$  as $\e\downarrow 0$.}$$
This proves the convergence of $\hat{\a}_\e$. Defining, $\alpha_\e:= \hat{\a}_\e+\tilde\a_\e$ we have that $(\alpha_\e, \tilde\b_\e) \in  \cA_{r'}$ and  $(\a_\e,\tilde \b_\e)\to(\a,\b)$ in $\fM(\overline{\Omega})\times L^{q'}(\Omega)^{d}$.   The embedding $\fM(\overline{\Omega}) \hookrightarrow \Wqstarom$ implies that    the convergence also holds in  $(W^{1,q}(\Omega))^{*} \times L^{q'}(\Omega)^{d}$,  from which   assertion {\rm(i)} follows. \smallskip\\
%
%
%
{\it Proof of {\rm (ii)}.} It suffices to show \eqref{expressionsforL} (here we remark that by the Sobolev embedding we identify $m$  with an element in  $C(\ov\Om),$ hence the second integral is   meaningful). Indeed, \eqref{expressionsforL}  shows that   $\cL_r$ is the supremum of linear and continuous functionals, hence it is convex and l.s.c. 
 For $k \in \mathbb{N}$ set $ A_{r',k}:= \left\{ (a,b) \in A_{r'} \; ; \;   a  \geq -k, \; \max_{i=1,\hdots, d}|b_i|\leq k\right\}$.  Since a.e. in  $ \Omega$ we have that 
$$ \lim_{k \to \infty} \sup_{(a,b)\in A_{r',k}} \left\{ a m(x) + b\cdot w(x)\right\}= \sup_{(a,b)\in  A_{r'}}  \left\{ a m(x) + b\cdot w(x)\right\}, $$
and $(a,b)=(0,0) \in  A_{r',k}$, by monotone convergence we have that 
\begin{equation}\label{ec:1} \mathcal{L}_r(m,w)=  \lim_{k \to \infty} \int_{\Omega} \sup_{ (a,b)\in  A_{r',k}} [ a m(x) + b\cdot w(x)]\,\dd x.\end{equation}
Note that if $\left| \{m<0\}\right|>0$, then by  \eqref{expressionovA}, we readily check that both sides in    \eqref{expressionsforL} are equal to $+\infty$.
On the other hand, note that for every $(m,w)\in\R\times\R^d$ with $m\geq 0$ there exists a unique pair $(a(m,w), b(m,w))\in\R\times\R^d$ such that 
$$ \sup_{ (a,b)\in  A_{r',k}} \left\{ a m  + b\cdot w\right\}=  a(m,w)m  + b(m,w)\cdot w. $$
Indeed,
\begin{equation}\label{ec:argmax} b(m,w)= \mbox{argmax}_{|b|_{\infty}\leq k} \left\{  -\frac{|b|^{r'} }{r'} m  + b\cdot w\right\} \; \;  \; \; \mbox{and } \;  a(m,w)=-\frac{|b(m,w)|^{r'} }{r'},\end{equation}
which are well-defined by the strict concavity of the objective function. Moreover
this implies that  $ \R_{+} \times \R^{d}\ni (m,w) \mapsto (a(m,w), b(m,w))\in \R \times \R^{d}$ is continuous and measurable and thus $W^{1,q}(\Omega) \times (L^{q}(\Omega))^{d}\ni (m,w) \mapsto (a(m,w), b(m,w)) \in L^{\infty}(\Omega) \times L^{\infty}(\Omega)^{d}$ is well defined.  Therefore, defining  $  \mathcal{A}_{r',k}:=\left\{(a,b)\in L^{\infty}(\Omega) \times L^{\infty}(\Omega)^{d}, \; \;  (a(x),b(x)) \in  A_{r',k} \; \; \mbox{for a.e. $x\in\Omega$} \right\}$ we get that 
$$\int_{\Omega} \sup_{ (a,b)\in  A_{r',k}} [ a m(x) + b\cdot w(x)]\,\dd x= \sup_{(a,b)\in  \mathcal{A}_{r',k}} \int_{\Omega}  [a(x) m(x) + b(x)\cdot w(x)]\, \dd x,$$
which, together with  \eqref{ec:1}, implies  that 
\begin{align*} 
\mathcal{L}_r(m,w) &=  \lim_{k \to \infty} \sup_{(a,b)\in  \cA_{r',k}} \int_{\Omega}  [a(x) m(x) + b(x)\cdot w(x)]\, \dd x\\[6pt]
&=\sup_{(a,b)\in \mathcal{A}_{r'}} \int_{\Omega}  [a(x) m(x) + b(x)\cdot w(x)]\, \dd x,
\end{align*}
proving the first equality in \eqref{expressionsforL}. The second equality follows from {\rm(i)} and the continuity of the considered linear application. Finally, the identity $\cL^*_r=\chi_{\ov\cA_{r'}}$ is a consequence of ${\rm{(i)}}$ and \eqref{expressionsforL}.
\end{proof}
\begin{remark}
We refer the reader to \cite[Chapter 5]{filippobook15} for the proof of the semicontinuity of $\cL_r$ in a more general setting.
\end{remark}

For  $m\in W^{1,q}(\Omega)$  denote $E_0^m:=\left\{x\in\ov\Om:m(x)=0\right\}$ and $E_{1}^m=\left\{x\in\ov\Om:m(x)>0\right\}$. Note that if $q>d$, then $E_0^{m}$ is closed. 

\begin{theorem}\label{subdifferentialcomputation}  Let $(m,w)\in W^{1,q}(\Omega)\times L^{q}(\Omega)^d$ ($q>d$) and $1<  r \leq q$.  Suppose that $\cL_r(m,w)<\infty$.     Then,  if  $v:= (w/m)\one_{E_1^m}\notin L^{r}(\Omega)^d$ we have that $\partial\cL_r(m,w)=\emptyset$. 
Otherwise,    $\cL_r$ is subdifferentiable at $(m,w)$ and  
\begin{equation}\label{subdifferential}
\partial\mathcal{L}_r(m,w)=\left\{(\a,\b)\in \overline{\cA_{r'}} \; : \; \a\mres E_1^m=-\frac{1}{r'}|v|^r \; \; \; {\rm and } \; \;  \;  \b\mres E_1^m=|v|^{r-2}v\right\}.
\end{equation} \normalsize
In particular, the singular part of $\alpha$ is concentrated in $E_0^m$. 
\end{theorem}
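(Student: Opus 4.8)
The plan is to read the subdifferential off the Fenchel--Young equality, using that the conjugate of $\cL_r$ has already been identified in Lemma \ref{subdiff_cost}. Restricted to $W^{1,q}(\Omega)\times L^q(\Omega)^d$ the functional $\cL_r$ is proper (it is finite at $(m,w)$ and nonnegative), convex and l.s.c.\ by part (ii), and $\cL_r^*=\chi_{\ov{\cA_{r'}}}$. Hence, for $(\a,\b)\in\Wqstarom\times L^{q'}(\Omega)^d$, one has $(\a,\b)\in\p\cL_r(m,w)$ if and only if
\[
\cL_r(m,w)+\cL_r^*(\a,\b)=\int_{\ov\Om}m\,\dd\a+\int_\Om\b\cdot w\,\dd x .
\]
Since the left-hand side is finite only when $(\a,\b)\in\ov{\cA_{r'}}$, this is equivalent to $(\a,\b)\in\ov{\cA_{r'}}$ together with attainment of the supremum in the second line of \eqref{expressionsforL}. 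Thus the statement reduces to identifying the maximizers of that supremum.

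To analyze them I would first record two consequences of $\cL_r(m,w)<\infty$: because $q>d$ we have $m\in C(\ov\Om)$ with $m\geq0$ (otherwise $\cL_r=+\infty$ by \eqref{expressionovA}), so $E_0^m$ is closed and $E_1^m$ relatively open; moreover $w=0$ a.e.\ on $E_0^m$, since finiteness of $\ell_r$ forces $b=0$ when $a=0$. The key estimate is the pointwise Fenchel inequality: for a.e.\ $x\in E_1^m$ and every $(a,b)\in A_{r'}$,
\[
a\,m(x)+b\cdot w(x)\ \leq\ \ell_r(m(x),w(x)),
\]
with equality if and only if $b=|v(x)|^{r-2}v(x)$ and $a=-\tfrac{1}{r'}|v(x)|^r$, by the strict concavity already exploited in \eqref{ec:argmax}. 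Writing $\a=\a^{\rm ac}+\a^{\rm s}$, integrating over $E_1^m$, using $m=w=0$ on $E_0^m$, and adding $\int_{\ov\Om}m\,\dd\a^{\rm s}\leq0$ (valid since $m\geq0$ and $\a^{\rm s}\leq0$), I obtain
\[
\int_{\ov\Om}m\,\dd\a+\int_\Om\b\cdot w\,\dd x=\int_{E_1^m}\big[\a^{\rm ac}m+\b\cdot w\big]\,\dd x+\int_{\ov\Om}m\,\dd\a^{\rm s}\ \leq\ \cL_r(m,w).
\]
Therefore $(\a,\b)$ maximizes precisely when $\int_{\ov\Om}m\,\dd\a^{\rm s}=0$ and the pointwise equality holds a.e.\ on $E_1^m$.

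It remains to translate these two equality conditions, which I expect to be the main obstacle, as it is where the measure-theoretic bookkeeping concentrates. For the singular part, $-\a^{\rm s}\geq0$, $m\geq0$ continuous and $\int m\,\dd(-\a^{\rm s})=0$ force $-\a^{\rm s}$ to give zero mass to $\{m>0\}$ (if it charged $\{m>1/n\}$ for some $n$ the integral would be strictly positive); hence $\a^{\rm s}$ is concentrated in $\{m=0\}=E_0^m$ and $\a\mres E_1^m=\a^{\rm ac}\mres E_1^m$. The pointwise condition then gives $\a\mres E_1^m=-\tfrac{1}{r'}|v|^r$ and $\b\mres E_1^m=|v|^{r-2}v$, which is exactly the description \eqref{subdifferential}, including the asserted concentration of the singular part in $E_0^m$.

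Finally I would settle nonemptiness through integrability, noting $(r-1)r'=r$ so that $\int_{E_1^m}|\b|^{r'}=\int_{E_1^m}|v|^r$ for the forced trace $\b\mres E_1^m=|v|^{r-2}v$. If $v\notin L^r(\Omega)^d$ this integral is $+\infty$, so the mandatory $\b\mres E_1^m$ cannot lie in $L^{r'}(\Om)^d$, no element of $\ov{\cA_{r'}}$ attains the supremum, and $\p\cL_r(m,w)=\emptyset$. If instead $v\in L^r(\Omega)^d$, then $\b\mres E_1^m\in L^{r'}$ and $\a\mres E_1^m=-\tfrac{1}{r'}|v|^r\in L^1$; extending by $(\a^{\rm ac},\b)=(0,0)$ on $E_0^m$ (and, say, $\a^{\rm s}=0$) produces an admissible element of $\ov{\cA_{r'}}$ satisfying \eqref{subdifferential}. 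Conversely every member of the right-hand set of \eqref{subdifferential} is admissible and attains the supremum, so it coincides with $\p\cL_r(m,w)$, completing the proof.
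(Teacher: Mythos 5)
Your argument is correct and arrives at the full statement, but it takes a genuinely different route from the paper's. Both proofs share the same starting point --- by Lemma \ref{subdiff_cost}, $\p\cL_r(m,w)$ is the set of maximizers of $(\a,\b)\mapsto\int_{\ov\Om}m\,\dd\a+\int_\Om\b\cdot w\,\dd x$ over $\ov{\cA_{r'}}$ --- but the paper then proceeds globally: it proves the reduction \eqref{mewnrenwrbw} eliminating $\a$, handles emptiness by a contradiction argument based on the Fr\'echet differentiability of $J(\b)=\int_\Om[\tfrac{1}{r'}|\b|^{r'}-v\cdot\b]\,m\,\dd x$, obtains nonemptiness by minimizing over the reflexive weighted space $L^{r'}_m(E_1^m)^d$, and derives the absolute continuity of $\a\mres E_1^m$ from \eqref{aasasasaadadas} via a Lusin-theorem approximation. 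You instead run everything pointwise: the equality case of the fiberwise Fenchel inequality on $E_1^m$ (where $m>0$ and strict concavity give a unique maximizer) is integrated against the decomposition $\a=\a^{\rm ac}+\a^{\rm s}$, and attainment of the supremum becomes equivalent to $\int_{\ov\Om} m\,\dd\a^{\rm s}=0$ together with a.e.\ equality on $E_1^m$; this delivers the forced traces of $(\a,\b)$, the concentration of $\a^{\rm s}$ on $E_0^m$ by an elementary positivity argument on the sets $\{m>1/n\}$, and both the empty and nonempty cases from the single computation $(r-1)r'=r$. Your version is more economical, dispensing with the two auxiliary variational problems and the Lusin step; it does rely on two facts you should state explicitly: that the measure inequality defining $\ov{\cA_{r'}}$ decomposes into $\a^{\rm ac}+\tfrac{1}{r'}|\b|^{r'}\le0$ a.e.\ and $\a^{\rm s}\le0$ (recorded in the proof of Lemma \ref{subdiff_cost}(i)), and that $\a^{\rm ac}m$ and $\b\cdot w$ are integrable so that the pointwise inequality may be integrated (immediate since $\a^{\rm ac}\in L^1$, $m\in L^\infty$, $\b\in L^{r'}$ and $w\in L^q\subseteq L^r$ on the bounded set $\Om$). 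With those two remarks in place the proof is complete and sound.
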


\begin{proof} First note that since $\mathcal{L}_r(m,w)<\infty$, we have that  $\left| \{m<0\}\right|=0$ and  $w=0$ a.e. in $E_0^m$. By Lemma \ref{subdiff_cost}  for all $(m,w)\in W^{1,q}(\Omega)\times L^q(\Omega)^d,$ $m\ge 0$ we have that 
$$ \partial\mathcal{L}_r(m,w)= \mbox{argmax}_{(\alpha,\beta) \in \overline{\mathcal{A}_{r'}}} \left\{\int_{\ov{\Omega}}  m\,  \dd \a + \int_{\Omega} \b\cdot w\,  \dd x\right\}. $$
We claim that
\begin{equation}\label{mewnrenwrbw}\sup_{(\alpha,\beta)\in \ov{\cA_{r'}}} \left\{\int_{\ov{\Omega}} m \,  \dd \alpha  + \int_{\Omega}\beta \cdot w\, \dd x\right\}=\sup_{\b\in    L^{r'}(\Om)^d}-\frac{1}{r'}\int_\Omega m|\beta|^{r'}\,\dd x+\int_\Omega \beta \cdot  w\,\dd x.
\end{equation}
Indeed, the inequality ``$\ge$'' is immediate. To show the converse inequality  for every $\e>0$  let $(\a_\e,\b_\e) \in  \ov{\cA_{r'}}$ such that 
$$\int_{\ov{\Omega}} m(x)\,  \dd \alpha_\e(x) + \int_{\Omega}\beta_\e(x)\cdot w(x)\, \dd x \geq \sup_{(\alpha,\beta)\in \ov{\cA_{r'}}} \left\{\int_{\ov{\Omega}} m(x)\,  \dd \alpha(x) + \int_{\Omega}\beta(x)\cdot w(x)\, \dd x\right\}-\e.$$
Then, denoting by $ \hat{s}$ the r.h.s. of  \eqref{mewnrenwrbw}, by  \eqref{expressionovA} and the previous inequality we have that 
$$ \hat{s} \geq -\frac{1}{r'}\int_\Omega m|\beta_\e|^{r'}\,\dd x+\int_\Omega \beta_{\e}\cdot  w\,\dd x \geq \sup_{(\alpha,\beta)\in \ov{\cA_{r'}}} \left\{\int_{\ov{\Omega}} m(x)\,  \dd \alpha(x) + \int_{\Omega}\beta(x)\cdot w(x)\, \dd x\right\}-\e $$
and so \eqref{mewnrenwrbw} follows by letting $\e \to 0$. 
Let us prove now that if $v \notin L^{  r}(E_1^m)^d$, then $\partial \cL_{r}(m, w)=\emptyset$.  We argue by contradiction supposing  that there exists $(\hat{\alpha},\hat{\beta}) \in \partial \cL_{r}(m, w)$. By \eqref{mewnrenwrbw} and  the assumption $w=0$ a.e. in $E_0^m$,  $\hat{\beta}$ must be a solution of the problem
\begin{equation}\label{problemaaux} \inf_{\beta \in   L^{r'}(\Omega)^d} J(\beta), \; \; \mbox{where } \;  J(\beta):= \int_{\Omega} \left[ \frac{1}{r'}|\beta|^{r'}- v\cdot \beta\right]m\,\dd x.\end{equation}
Since $vm=w \in L^{q}(\Omega)^d$ and $q\geq r$, we have that $J$ is Fr\'echet differentiable and 
\begin{equation}\label{qenqnbbdasaspqppqpw}0 = DJ(\hat{\beta})\beta= \int_{E_{1}^m} \left[ |\hat{\beta}|^{r'-2} \hat{\beta} -v\right]\cdot \beta m\, \dd x \hspace{0.3cm} \mbox{for all $\beta \in    L^{r'}(\Omega)^d$},\end{equation}
which implies that, since   $\beta$ is arbitrary  and $m>0$ on $E_1^m$,
\begin{equation}\label{qemmendbbdbas}v(x)=|\hat{\beta}(x)|^{r'-2} \hat{\beta}(x) \hspace{0.3cm} \mbox{for a.e. $x\in E_1^m$},\end{equation}
which  is a contradiction because  $|\hat{\beta}|^{r'-2} \hat{\beta} \in L^{r}(E_1^m)^{d}$. Now,   assume that $v \in L^{ r}(E_1^m)^d$ and let us prove that $\partial\cL_r(m,w)\neq \emptyset$. Define the functional $\hat{J}: L^{r'}_{m}(E_{1})^{d} \to \R $ (where $L^{r'}_{m}(E_{1}^m)$ denotes the space of measurable functions defined in $E_1^m$ which are integrable w.r.t. the measure $m$) as
$$\hat{J}(\beta) = \frac{1}{r'}  \int_{E_{1}^m} |\beta|^{r'} m\,\dd x    - \int_{E_{1}^m} v \cdot \beta  m\, \dd x.$$
Since $r'>1$, we have that $\hat{J}$ is coercive, continuous and strictly convex. Since $L^{r'}_{m}(E_{1}^m)^{d}$ is a reflexive Banach space, classical results in convex analysis imply the existence of a unique $\bar{\beta}\in L^{r'}_{m}(E_{1}^m)^{d}$ such that $\hat{J}(\bar{\beta})= \inf \{ \hat{J}(\beta) \; : \; \beta \in L^{r'}_{m}(E_{1}^m)^{d}\}$. The first order optimality condition implies that  $\bar{\beta}$ satisfies \eqref{qenqnbbdasaspqppqpw}-\eqref{qemmendbbdbas} and so
\begin{equation}\label{problemaaux2}\bar{\beta}(x)= |v(x)|^{r-2}v(x) \hspace{0.3cm} \mbox{for a.e. $x\in E_1^m$}.\end{equation}
Since $v\in L^r(E_1^m)^{d}$ we have that $\bar{\beta}  \in L^{r'}(E_1^m)^{d}$. Moreover, using that $L^{r'}(E_1^m)^{d} \subseteq L^{r'}_{m}(E_{1}^m)^{d}$, relation \eqref{mewnrenwrbw} implies that $ ( -|\bar{\beta}|^{r'}/r',\bar{\beta}) \in \partial\cL_r(m,w)$ and so $\partial\cL_r(m,w)\neq \emptyset $. Now, let $ (\alpha, \beta) \in \partial\cL_r(m,w)$.  The expression for $\ov{\cA_{r'}}$ in \eqref{expressionovA}  implies that $( -(1/r') |\beta|^{r'}, \beta)$ attains the supremum on the r.h.s. of \eqref{mewnrenwrbw}. Therefore, we must have
\begin{equation}\label{aasasasaadadas}\int_{\ov{\Omega}} m\, \dd \alpha + \frac{1}{r'} \int_{\Omega} |\beta|^{r'} m\, \dd x=\int_{E_1^m} m\, \dd \alpha + \frac{1}{r'} \int_{E_1^m} |\beta|^{r'} m\, \dd x=0.\end{equation}
Let us prove that $\alpha \mres E_1^m$ is absolutely continuous w.r.t. the Lebesgue measure restricted to $E_1^m$. Let  $B \in \mathcal{B}(E_1^m)$, 
 such that  $|B|=0$. Then, \eqref{aasasasaadadas} implies that
 $$\int_{E_1^m\setminus B} m\, \dd \alpha + \frac{1}{r'} \int_{E_1^m\setminus B} |\beta|^{r'} m\, \dd x+\int_{B} m\, \dd \alpha =0.$$
 By a standard argument using Lusin's theorem (to approximate the $\one_{E_1^m\setminus B}$ by continuous functions) and \eqref{expressionovA}   
we must have that $\int_{B} m \dd \alpha =0$ and since $m>0$ on $E_{1}^m$ we conclude that $\alpha(B)=0$. Thus $\alpha \mres E_1^m\ll |\cdot| \mres E_1^m$.  In particular, $\mbox{spt}(\a^{s}) \subseteq E_{0}^m$ and, denoting still by $\alpha$ the density of $\alpha$ restricted to  $E_1^m$,  $\alpha(x)+ (1/r')|\beta(x)|^{r'} \leq 0$ for a.e. $x\in E_{1}^m$. Therefore, by \eqref{aasasasaadadas} we have that 
 $$ \int_{E_{1}^m}m\left[ \alpha+ \frac{1}{r'}|\beta|^{r'}\right] \dd x =0, $$
 and since $m>0$ on $E_1^m$,  we conclude that $\alpha=-\frac{1}{r'}|\beta|^{r'}$ a.e. in $E_{1}^m$. Using \eqref{mewnrenwrbw} we get that $\beta$ solves problem \eqref{problemaaux} and so $\b= |v|^{r-2}v$ a.e. in  $E_1^m$ from which the result follows. 
\end{proof} 
 \begin{remark} Note that redefining the domain of $\cL_r$ as $C(\ov{\Om})\times L^{q}(\Om)^d$, the above proof   shows that the conclusions of the  Theorem \ref{subdifferentialcomputation} are still valid in this setting. 
 \end{remark}
%
%
\section{The optimization problem}\label{problemstatement}
In this entire section we suppose that  $q>d$. Let $f: \Omega \times\R \to \R$ be a  continuous function in both variables and  increasing  in the second variable.  Let us define the function 
$$\ds\Omega \times \R\ni (x,m) \mapsto F(x,m):=\int_0^m f(x,s)\, \dd s \in \R.$$
 Note that for every fixed $x\in \Omega$ the function $m\mapsto F(x, m)$ is convex.  Let us define
\begin{equation}\label{definitionF}\cF:W^{1,q}(\Omega)\to\overline{\R} \hspace{0.3cm} \mbox{as } \hspace{0.3cm}\ds\cF(m):=\int_\Omega F(x,m(x))\,\dd x.\end{equation}
 Given $w \in L^{q}(\Omega)^{d}$ we consider the following elliptic PDE 
\begin{equation}\label{principaleq} 
\left\{
\begin{array}{rcl}
-\Delta m + \diver(w)&=&0  \hspace{0.4cm} \iin\ \Omega,\\[4pt]
(\nabla m - w)\cdot n &=&0 \hspace{0.4cm}  \oon\ \partial\Omega.
\end{array}
\right.
\end{equation}
We say that $m\in W^{1,q}(\Omega)$ is a weak solution of  \eqref{principaleq} if
\begin{equation}\label{principalecweakform}
\int_\Omega \nabla m(x)\cdot \nabla\varphi(x)\, \dd x =\int_\Omega w(x) \cdot\nabla\varphi(x)\,\dd x \hspace{0.5cm} \forall \; \phi \in C^{1}(\overline{\Omega})
\end{equation}
 By Lemma \ref{div_surj} and Lemma \ref{regularity} in the Appendix for a given $w\in L^q(\Omega)^d$ equation \eqref{principaleq}   has a unique  solution $m\in W^{1,q}(\Omega)$ satisfying that $\ds\int_{\Omega} m(x)\,\dd x=1$. We consider the following optimization problem:
\begin{equation}
  \inf_{(m,w)\in\cK_P}\cJ_q(m,w):=\cL_q(m,w)+\cF(m), \tag{$P_q$}\label{Pq}
\end{equation}
where  the set of constraints $\cK_P$ is defined as
\small
$$\cK_P:=\left\{(m,w)\in W^{1,q}(\Omega)\times L^q(\Omega)^d \; ; \;  \mbox{such that $(m,w)$ satisfies \eqref{principaleq}}, \; \; \int_\Omega m(x)\,\dd x=1,\  m\le 1\right\}.$$\normalsize
\begin{remark} Since $\cL_q=\cL_q+\chi_{\{m\ge0\}}$,     the constraint $m\geq 0$ is implicitly imposed in  \eqref{Pq}.
\end{remark}
Given $s\in [1,\infty[$ we set $\ds W^{k,s}_\diamond(\Omega):=\left\{u\in W^{k,s}(\Omega):\int_\Omega u =0\right\}$.
Now, let us  define $A: \small W^{1,q}(\Omega)\to \left(W^{1,q'}_\diamond(\Omega)\right)^*$ and $B:L^q(\Omega)^d\to \left(W^{1,q'}_\diamond(\Omega)\right)^*$  \normalsize as 
$$ \llangle Am, \phi \rrangle  := \int_{\Omega} \nabla m(x)\cdot  \nabla \phi(x)\, \dd x, \hspace{0.3cm} \llangle Bw, \phi \rrangle := -\int_{\Omega} w(x)\cdot \nabla \phi(x)\, \dd x,$$
for all $m\in W^{1,q}(\Omega)$, $w\in L^q(\Omega)^d$ and $\phi \in  W^{1,q'}_\diamond(\Omega)$, 
where we used $\llangle \cdot, \cdot \rrangle$ to denote the duality product between $\left(W^{1,q'}_\diamond(\Omega)\right)^*$ and $W^{1,q'}_\diamond(\Omega)$.  Since $A$ and $B$ are linear  bounded operators,   the adjoint operators $A^*:\Wdqpom\to\Wqstarom$ and $B^*:\Wdqpom\to L^{q'}(\Omega)^d$ are well-defined and  given by
$$\langle A^*\phi,m\rangle=\int_\Omega\nabla\phi(x)\cdot \nabla m(x)\,\dd x, \hspace{0.6cm} \langle B^*\phi,w\rangle_{q',q} = -\int_\Omega \nabla\phi(x)\cdot  w(x)\,\dd x,$$
where we have used $\langle \cdot, \cdot \rangle$ to denote the duality product  between $ (W^{1,q}(\Omega))^{*}$ and $W^{1,q}(\Omega)$ and $\langle \cdot, \cdot \rangle_{q',q}$ to denote the duality product between $L^{q'}(\Om)^d$ and  $L^{q}(\Om)^d$.
  Now, let $I: W^{1,q}(\Omega) \to C(\overline{\Omega})$ be the Sobolev injection, which is well-defined since $q>d$ (see \cite{adams}),  and let $\cC:=\{ z \in C(\overline{\Omega}) \; ; \;  z  \leq 1\}$. Let us set  $X:=W^{1,q}(\Omega)\times L^q(\Omega)^d$, $Y:=\left(W^{1,q'}_\diamond(\Omega)\right)^*\times\R\times C(\overline{\Omega})$ and define the application $G:X\to Y$   as  
$$ G(m,w):=\left(Am+Bw, \int_\Omega m(x)\, \dd x-1, Im\right).$$
By setting $\cK:=\{0\}\times\{0\}\times \cC\subseteq Y$ we have that  $\cK_{P}$ can be rewritten as 
$$\cK_{P}= \{ (m,w) \in X \; : G(m,w) \in \cK  \}.$$
Since $A,$ $B$ and $I$ are linear bounded operators, we have that $ \cK_{P}$ is a closed and convex  subset of  $W^{1,q}(\Omega)\times L^q(\Omega)^d$.  
%
%
\begin{theorem}\label{existence} Problem \eqref{Pq} has (at least) one  solution $(m,w)$.
\end{theorem}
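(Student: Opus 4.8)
The plan is to apply the direct method of the calculus of variations. First I would verify that \eqref{Pq} is feasible with finite value: the constant pair $(m_0,w_0):=(1/|\Omega|,0)$ lies in $\cK_P$, since $\int_\Omega m_0\,\dd x=1$, $0<m_0=1/|\Omega|<1$ (here we use the standing hypothesis $|\Omega|>1$), and the constraint \eqref{principaleq} is trivially satisfied by a constant with $w_0=0$. Moreover $\cL_q(m_0,0)=0$ by \eqref{definicionfuncionesdebase}, and $\cF(m_0)$ is finite because $F$ is continuous and the argument $m_0$ ranges in the compact set $[0,1]$, so the infimum in \eqref{Pq} is $<+\infty$. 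It is also bounded below: $\cL_q\ge 0$ by \eqref{definicionfuncionesdebase}, and for any admissible $m$ one has $0\le m\le 1$, whence $\cF(m)\ge -|\Omega|\,\max_{\ov\Omega\times[0,1]}|F|>-\infty$.

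Next I take a minimizing sequence $(m_n,w_n)\in\cK_P$ with $\cJ_q(m_n,w_n)$ converging to the infimum. The density constraint supplies coercivity in $w$: since $0\le m_n\le 1$ and $q-1>0$, we have $m_n^{q-1}\le 1$ on $\{m_n>0\}$, so $\ell_q(m_n,w_n)\ge \tfrac1q|w_n|^q$ pointwise (on $\{m_n=0\}$ finiteness of $\cL_q$ forces $w_n=0$). Integrating gives $\tfrac1q\|w_n\|_{L^q(\Omega)^d}^q\le \cL_q(m_n,w_n)=\cJ_q(m_n,w_n)-\cF(m_n)$, which is uniformly bounded by the previous paragraph; hence $(w_n)$ is bounded in $L^q(\Omega)^d$. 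Inserting this bound into the elliptic estimate of Lemma \ref{regularity} for the solution of \eqref{principaleq} normalized by $\int_\Omega m_n\,\dd x=1$ bounds $(m_n)$ in $W^{1,q}(\Omega)$. By reflexivity I extract a subsequence with $m_n\weakly m$ in $W^{1,q}(\Omega)$ and $w_n\weakly w$ in $L^q(\Omega)^d$; since $q>d$, the compact Sobolev embedding $W^{1,q}(\Omega)\hookrightarrow C(\ov\Omega)$ upgrades this to $m_n\to m$ uniformly on $\ov\Omega$.

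It then remains to pass to the limit. The set $\cK_P$ is convex and closed (being the preimage of the closed convex $\cK$ under the bounded affine map $G$), hence weakly closed, so $(m,w)\in\cK_P$; equivalently, the linear weak formulation \eqref{principalecweakform} and the normalization $\int_\Omega m\,\dd x=1$ pass to the weak limit, while $m\le 1$ survives the uniform convergence. For the objective, $\cL_q$ is convex and lower semicontinuous by Lemma \ref{subdiff_cost}(ii), hence weakly lower semicontinuous, giving $\cL_q(m,w)\le\liminf_n\cL_q(m_n,w_n)$; and since $F$ is continuous and $m_n\to m$ uniformly, $\cF(m_n)\to\cF(m)$. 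Combining, $\cJ_q(m,w)\le\liminf_n\cJ_q(m_n,w_n)$, which equals the infimum, so $(m,w)$ is a minimizer. The one genuinely load-bearing step is the coercivity bound on $w_n$: it rests entirely on the density constraint $m\le 1$, which is what lets $\ell_q$ dominate $\tfrac1q|w|^q$; without it the quotient $|w|^q/m^{q-1}$ need not control any fixed norm of $w$, and everything else is the standard weak-compactness and weak-lower-semicontinuity machinery, already prepared by Lemma \ref{subdiff_cost} and Lemma \ref{regularity}.
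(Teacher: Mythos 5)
Your proof is correct and follows essentially the same route as the paper: feasibility via $(1/|\Omega|,0)$, coercivity of $w_n$ in $L^q$ from the density constraint $m\le 1$, the elliptic estimate of Lemma \ref{regularity} plus Poincar\'e to bound $m_n$ in $W^{1,q}$, weak closedness of $\cK_P$, and weak lower semicontinuity of the cost via Lemma \ref{subdiff_cost}. The only cosmetic difference is that you handle $\cF$ by uniform convergence through the compact Sobolev embedding while the paper invokes convexity and lower semicontinuity of the full functional $\cJ_q$; both are valid.
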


\begin{proof}
Since $|\Om|>1$ we have that $(m,w):=\ds\left(1/ |\Omega|,0\right)$ belongs to $\cK_{P}$     and the  cost function is finite. Now, let $(m_k,w_k)\in \cK_{P}$ be a   minimizing sequence.
 Since    $m\leq 1$ a.e. in $\Omega$ and $\cL_{q}(m_k, w_k)$ is bounded uniformly  in $k$, we get that $\|w_k\|_{L^q}$ is bounded.  Therefore,  there exists $w\in L^q(\Omega)^d$ such that, except for some subsequence,  $w_k \weakly w$ weakly in $L^q(\Omega)^d$. In addition, by Lemma \ref{regularity} and the boundedness of $w_k$ in $ L^q(\Omega)^d$ we have that  
$\nabla m_k$ is uniformly bounded in $L^q(\Omega)^d.$
Since $\ds\int_{\Omega} m_{k}(x)\, \dd x=1$,   Poincar\'e's inequality $\left(\left\|m_k-\frac{1}{|\Omega|}\right\|_{L^q}\le C\|\nabla m_k\|_{L^q}\right)$ implies that $m_k$ is bounded in  $W^{1,q}(\Omega)$. Thus,  there exists $m\in W^{1,q}(\Omega)$ such that, except for some subsequence, $m_k\weakly m$ weakly in $W^{1,q}(\Omega)$. Using these convergences, we get that $Am+ Bw=0$ and $\ds\int_{\Omega} m(x)\, \dd x=1$. 
%
The continuous embedding $I$ preserves the weak convergence and $\cC$ is weakly closed in $C(\ov{\Omega})$. Thus, $Im \in \cC$, which implies that $(m,w)\in \cK_P$. 
%
%
Since $\cJ_q$ is convex and l.s.c. w.r.t the weak topology in $W^{1,q}(\Omega) \times L^{q}(\Omega)^{d}$ (by Lemma \ref{subdiff_cost}) we get that $\cJ_q(m,w)= \inf \{\cJ_q(m_1,w_1) \; : \; (m_1,w_1)\in\cK_{P}\}$.  
\end{proof}

Now, we prove a {\it constraint qualification}  result for problem \eqref{Pq} (see e.g. \cite[Chapter 2]{BonSha}), which is crucial for deriving optimality conditions.  We set $ \mbox{{\rm dom}}(\cJ_q):=\{ (m,w) \in W^{1,q}(\Omega) \times L^{q}(\Omega)^{d} \; : \; \cJ_q(m,w) <\infty\}$. 
\begin{lemma}\label{qual_verified} We have that
\begin{equation}\label{qualificationproblemq}
0 \in \mbox{{\rm int}} \left\{   G(  \mbox{{\rm dom}}(\cJ_q)) - \cK \right\}.
\end{equation}

\end{lemma}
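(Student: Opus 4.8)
The plan is to verify the Robinson-type qualification \eqref{qualificationproblemq} by an explicit construction: I will produce $\rho>0$ such that every target $(\xi,t,\psi)\in Y$ with $\|\xi\|_{\Wdqpstarom}\le\rho$, $|t|\le\rho$ and $\|\psi\|_{C(\ov\Om)}\le\rho$ can be written as $G(m,w)-(0,0,\zeta)$ with $(m,w)\in\mbox{{\rm dom}}(\cJ_q)$ and $\zeta\in\cC$; this exhibits a neighbourhood of $0$ inside $G(\mbox{{\rm dom}}(\cJ_q))-\cK$. Two facts drive the argument. First, since $q>d$ the embedding $W^{1,q}(\Om)\hookrightarrow C(\ov\Om)$ forces $\cF$ to be finite-valued, so $\mbox{{\rm dom}}(\cJ_q)=\mbox{{\rm dom}}(\cL_q)$, and the latter only asks that $m\ge0$ and $\cL_q(m,w)<\infty$. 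Second, the constant density $\bar m\equiv1/|\Om|$ is a \emph{strict} point for the soft constraint $m\le1$; this is exactly where the standing hypothesis $|\Om|>1$ enters, as it produces the slack $1-1/|\Om|>0$ that I will spend on the perturbations.

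First I would record that $B$ is onto $\Wdqpstarom$. Given $\xi\in\Wdqpstarom$, the $L^q$-solvability of the weak Neumann problem furnished by Lemmas \ref{div_surj} and \ref{regularity} yields $u\in W^{1,q}_\diamond(\Om)$ with $\int_\Om\nabla u\cdot\nabla\phi\,\dd x=\langle\xi,\phi\rangle$ for all $\phi\in W^{1,q'}_\diamond(\Om)$, that is $Au=\xi$. Setting $w:=-\nabla u\in L^q(\Om)^d$ then gives $Bw=\xi$. The crucial point is that $w$ carries \emph{no} constraint beyond membership in $L^q(\Om)^d$, so this surjectivity lets me absorb the entire first component $\xi$ of the target into $w$ alone, while keeping $m$ free.

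Now fix a target $(\xi,t,\psi)$ as above and set $m\equiv(1+t)/|\Om|$ together with $w:=-\nabla u$, where $Au=\xi$. For $|t|<1$ the density $m$ is a positive constant, whence $Am=0$, so $Am+Bw=\xi$ and $\int_\Om m\,\dd x-1=t$. Moreover $\cL_q(m,w)=\tfrac{1}{q\,m^{q-1}}\|w\|_{L^q}^q<\infty$ and $\cF(m)<\infty$, so $(m,w)\in\mbox{{\rm dom}}(\cJ_q)$. Defining $\zeta:=Im-\psi\in C(\ov\Om)$ we obtain $G(m,w)-(0,0,\zeta)=(\xi,t,\psi)$, and it remains only to check $\zeta\in\cC$, i.e.\ $m-\psi\le1$ on $\ov\Om$. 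Since $\psi\ge-\|\psi\|_{C(\ov\Om)}\ge-\rho$ and $t\le\rho$, we have $m-\psi\le(1+\rho)/|\Om|+\rho$; choosing $\rho:=(|\Om|-1)/(|\Om|+1)>0$ makes the right-hand side $\le1$ (and $\rho<1$ guarantees $m>0$). Hence $\zeta\le1$, which shows $(\xi,t,\psi)\in G(\mbox{{\rm dom}}(\cJ_q))-\cK$ and establishes \eqref{qualificationproblemq}.

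The only genuine analytic input is the $L^q$-solvability of the divergence/Neumann problem (Lemmas \ref{div_surj}, \ref{regularity}), giving surjectivity of $B$; everything else is bookkeeping. Accordingly the main point to watch is the balance of the two scalar budgets: the mass perturbation $t$ and the size of $\psi$ both eat into the fixed slack $1-1/|\Om|$ of the soft constraint, so $\rho$ must be taken small relative to $|\Om|-1$. The clean device that keeps the argument elementary is to leave $m$ constant, so that $Am=0$, and to route the whole PDE-constraint correction through the otherwise unconstrained field $w$.
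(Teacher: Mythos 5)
Your proof is correct, and it takes a genuinely different route from the paper's. The paper freezes the momentum variable at $w=0$ (so that $\cL_q(m,0)=0$ trivially) and absorbs all three perturbations into $m$: it uses the surjectivity of $m\mapsto\left(Am,\int_\Omega m\,\dd x\right)$ from Lemma \ref{regularity} together with the Open Mapping Theorem to pick a kernel element $h$ close to $m_0-m_1$ in $W^{1,q}(\Omega)$, and then the Sobolev embedding to verify that the corrected density stays in $\mbox{\rm int}(\cC)$. You do the opposite: you freeze $m$ at a constant $(1+t)/|\Omega|$ (so that $Am=0$ and the constraints $0\le m$, $Im-\psi\le1$ reduce to explicit scalar inequalities in $t$, $\rho$ and $|\Omega|$) and absorb the entire PDE component $\xi$ into $w$ via the surjectivity of $B$, which is exactly Lemma \ref{div_surj} (your detour through $Au=\xi$ and $w=-\nabla u$ is equivalent). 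This buys you an argument with no Open Mapping Theorem and no norm estimate on the correction term; the only price is that you must check $\cL_q(m,w)<\infty$ for a nonconstant $w$, which is immediate since $m$ is a positive constant, and that (as in the paper's own verification that $(1/|\Omega|,0)$ has finite cost) $\cF$ is finite on bounded densities. Both proofs spend the same slack $1-1/|\Omega|>0$ coming from the hypothesis $|\Omega|>1$; your explicit choice $\rho=(|\Omega|-1)/(|\Omega|+1)$ makes that bookkeeping transparent.
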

\begin{proof} We need to prove that for any given $(\d_1,\d_2,\d_3)\in Y$ small enough there exists $(m,w,c)\in \mbox{\rm{dom}}(\cJ_q)\times \cC$ such that 
\begin{equation}
Am+Bw =\d_1, \; \;  \; \; 
\int_\Omega m (x)\, \dd x=1+\d_2, \; \; \;  \; I(m)-c=\d_3.\tag{S}\label{S}
\end{equation}
We observe that $(m,0)\in \mbox{\rm dom}(\cJ_q)$, for all $m \in W^{1,q}(\Omega)\cap\mbox{\rm dom}(\cF)$ non-negative, which implies that we can search the solution of \eqref{S} in the form $(m,0,c)\in \mbox{\rm dom}(\cJ_q)\times \cC.$ First of all, note that for $\ds m_0:= 1/|\Omega|$ we have that 
$$ Am_0=0, \hspace{0.8cm} \int_\Omega m_0(x)\, \dd x=1,  \hspace{0.8cm} I m_0=m_0\in \mbox{\rm int}(\cC).
$$
By Lemma \ref{regularity} (see the Appendix),  there exists $m_1\in W^{1,q}(\Omega)$ such that 
\begin{equation}\label{equationforgamma} Am_1=\d_1 \hspace{0.8cm}  \; \; \mbox{and }  \hspace{0.8cm}  \; \int_\Omega m_1(x)\, \dd x=1+\d_2.\end{equation}
Setting  $\d m:=m_0-m_1,$ we obtain that $A\d m=-\d_1$ and $\ds \int_\Omega \d m(x)\, \dd x=-\d_2.$
By   Lemma \ref{regularity}   the linear bounded operator $\ds W^{1,q}(\Omega)\ni m \mapsto \left(Am, \int_{\Omega} m(x)\, \dd x\right) \in  \left(W^{1,q'}_\diamond(\Omega)\right)^*\times\R$ is surjective and so,  by the Open Mapping Theorem, there exists $h\in W^{1,q}(\Omega)$ such that 
$$ A h =0, \hspace{0.8cm}    \int_{\Omega} h(x)\, \dd x=0 \hspace{0.8cm}  \mbox{and } \hspace{0.8cm}  \|h-\d m\|_{W^{1,q}}=O\left(\|(\d_1,\d_2)\|_{\left(W_\diamond^{1,q'}(\Omega)\right)^*\times\R}\right).$$
 In particular, as $q>d$   the Sobolev inequality implies   that 
\begin{equation}\label{restestimate} \|I(h)-I(\d m)\|_{L^{\infty}}=O\left( \|(\d_1,\d_2)\|_{\left(W_\diamond^{1,q'}(\Omega)\right)^*\times\R}\right).\end{equation}
 Now, let $r:=h-\d m$ and for $\gamma >0$ let us define  $m_\g:=m_1+\g h$, which   by construction solves  \eqref{equationforgamma}. Since $m_0 \in \mbox{\rm int}(\cC)$,    if $\gamma$ is near to one (and  $\d_1,\d_2$ are small enough) then, by \eqref{restestimate}, $I(m_\g) =I(m_1)+\g I(\delta m) + \g I(r) \in \mbox{\rm int}(\cC)$. Thus, if $\| \delta_3\|_{L^\infty}$ is small enough we have that $c:= I(m_{\gamma})- \delta_{3}\in \mbox{\rm int}(\cC)$. Thus, $(m_\g, 0, c)$ solves \eqref{S} and $(m_\g, 0) \in  \mbox{{\rm dom}}(\cJ_q)$. The result follows.
\end{proof}
\section{Optimality conditions and characterization of the solutions}\label{optimalityconditions}
The purpose of this section is to derive optimality conditions for problem \eqref{Pq} and, as a consequence, to obtain the existence of solutions of $(MFG_{q})$.  As in the previous section we will assume in all the statements that $q>d$. Our strategy relies on a ``direct method'', which uses the constraint qualification condition established in Lemma \ref{qual_verified} and the characterization  of the subdifferential of $\cL_q$ (see Theorem \ref{subdifferentialcomputation}). 
The uniqueness and local regularity of the solutions are also discussed.  Moreover, in Subsection \ref{subsec:daul} we formulate the associated dual problem, and we provide an alternative (but related) argument to derive optimality conditions.

Let us define the \textit{Lagrangian} $\fL: \Wqom\times L^q(\Om)^d \times W^{1,q'}_{\diamond}(\Omega) \times\fM(\ov{\Omega})  \times   \R\to \overline{\R}$ as 
\begin{align*}\numberthis\label{lagrangian}
\fL(m,w,u,p,\l)&:=\cJ_q(m,w)-\left\llangle Am+Bw,u \right\rrangle+ \int_{\Omega} Im\, \dd p+\l\left( \int_\Omega m\,\dd x-1 \right)\\[2pt]
& = \cJ_q(m,w)-\left\langle A^*u-I^*p-\lambda,m \right\rangle- \int_{\Omega}B^*u \cdot w \dd x -\lambda.
\end{align*}

\begin{remark} 
Since the inclusion $W^{1,q}(\Omega)\hookrightarrow C(\overline{\Omega})$ is dense,    for every measure $p\in \fM(\ov{\Omega})$ the adjoint    of the injection operator $I^{*}p$ at  $p$ can be identified  uniquely with the restriction  of $p$ to $W^{1,q}(\Omega)$.  Thus, for notational convenience we will still write $p$ for $I^{*}p$. 
\end{remark}
Recall that for a Banach space $X$ and a convex closed subset $K\subseteq X$, the  {\it normal cone} to  $K$ at $x\in K$ is  defined as
\begin{equation}\label{cononormala}N_K(x):=\{x^*\in X^*:\langle x^*,z-x\rangle_{X^*,X}\le 0,\forall z\in K\},\end{equation}
where $\langle \cdot , \cdot \rangle_{X^*,X}$ denotes the duality pairing of $X^*$ and $X$. Using \cite[Example 2.63]{BonSha} we have
%
\begin{equation}\label{cononormalenG} N_{\cK}(G(m,w))= \left\{(u,\lambda, p) \in  W^{1,q'}_{\diamond}(\Omega)\times \R \times \fM_+(\ov{\Omega}) \; ; \; \mbox{{\rm spt}}(p)\subseteq \{m=1\} \right\}.
\end{equation}
Now, we provide the first order optimality conditions associated to a solution $(m,w)$ of \eqref{Pq}. 
\begin{theorem}\label{mfg1} Let $(m,w)\in \cK_P$ be a solution of problem \eqref{Pq}. Then,   $v:=(w/m)\one_{\{m>0\}}\in L^{q}(\Omega)^{d}$ and there exists $(u,p,\lambda)\in W^{1,s}(\Omega)\times \fM_+(\ov{\Omega}) \times \R$  for all $s\in ]1,d/(d-1)[$ and 
 $(\alpha,\beta)\in \partial\mathcal{L}_q(m,w)$ such that  $ A^{*}u\in \fM(\ov{\Omega})$ and the following optimality conditions hold true 
\begin{equation}\label{sis1}
\left\{\begin{array}{rcl} \alpha+ f(\cdot,m) - A^{*}u +  p +\lambda &=& 0, \\[4pt]
\beta&=& B^{*}u,\\[4pt]
Am+ Bw&=&0, \\[4pt]
\mbox{{\rm spt}}(p) \subseteq \{m=1\}, & 0\le m\le 1, & \ds\int_\Om m\,\dd x=1,\; \; \int_\Om u \dd x=0,
\end{array}\right.
\end{equation}
where   the first equality holds in $\fM(\ov{\Omega})$. Conversely, if there exists $(\alpha,\beta, u, p, \lambda)\in \partial\mathcal{L}_q(m,w)\times W^{1,q'}(\Omega)\times \fM_+(\ov{\Omega}) \times \R $ such that \eqref{sis1} holds true, then $(m,w)$ solves \eqref{Pq}.
\end{theorem}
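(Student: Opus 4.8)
The plan is to prove both implications by convex duality, relying on the convexity and lower semicontinuity of $\cJ_q$, on the constraint qualification of Lemma~\ref{qual_verified}, and on the characterization of $\partial\cL_q$ obtained in Theorem~\ref{subdifferentialcomputation}.

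For the \emph{necessity} part, I would regard \eqref{Pq} as the minimization of the convex, l.s.c.\ functional $\cJ_q$ under the single constraint $G(m,w)\in\cK$, and note that Lemma~\ref{qual_verified} is exactly the Robinson constraint qualification $0\in\mathrm{int}\{G(\mathrm{dom}(\cJ_q))-\cK\}$. Applying the abstract first-order optimality theorem for convex problems (see \cite[Chapter~2]{BonSha}) yields a Lagrange multiplier $(u,\lambda,p)\in N_{\cK}(G(m,w))$, which by \eqref{cononormalenG} means $(u,\lambda,p)\in W^{1,q'}_\diamond(\Omega)\times\R\times\fM_+(\ov\Omega)$ with $\mbox{\rm spt}(p)\subseteq\{m=1\}$, such that $(m,w)$ minimizes $\fL(\cdot,\cdot,u,p,\lambda)$ over $X$; equivalently $0\in\partial_{(m,w)}\fL(m,w,u,p,\lambda)$. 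Since $\cF$ is finite and continuous on $W^{1,q}(\Omega)$ (as $f$ is continuous and $W^{1,q}(\Omega)\hookrightarrow C(\ov\Omega)$) and differentiable with $\partial\cF(m)=\{f(\cdot,m)\}$, the Moreau--Rockafellar sum rule gives $\partial\cJ_q(m,w)=\partial\cL_q(m,w)+(f(\cdot,m),0)$. Splitting the stationarity condition into its $m$- and $w$-components and using the expressions for $A^{*}$, $B^{*}$ and $I^{*}p=p$, I recover $(\alpha,\beta)\in\partial\cL_q(m,w)$ satisfying the first two lines of \eqref{sis1}, the remaining lines being the feasibility of $(m,w)$ and the normalization $\int_\Omega u=0$ built into $W^{1,q'}_\diamond(\Omega)$. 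As a by-product $\partial\cL_q(m,w)\neq\emptyset$, so Theorem~\ref{subdifferentialcomputation} forces $v\in L^{q}(\Omega)^d$ and provides the explicit form of $(\alpha,\beta)$. Finally, since $\alpha\in\fM(\ov\Omega)$, $f(\cdot,m)\in C(\ov\Omega)$, $p\in\fM_+(\ov\Omega)$ and $\lambda\in\R$, the first line of \eqref{sis1} identifies $A^{*}u$ with a Radon measure; the elliptic regularity theory for the Neumann problem with measure data (see the Appendix and \cite{mingione2}) then upgrades the a priori bound $u\in W^{1,q'}_\diamond(\Omega)$ to $u\in W^{1,s}(\Omega)$ for every $s\in\,]1,d/(d-1)[$.

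For the \emph{sufficiency} part, which requires no qualification, I would argue directly by convexity. Given any competitor $(m_1,w_1)\in\cK_P$, the subgradient inequality for $\cL_q$ at $(m,w)$ and the convexity of $\cF$ give
\[
\cJ_q(m_1,w_1)-\cJ_q(m,w)\ge\langle\alpha+f(\cdot,m),m_1-m\rangle+\langle\beta,w_1-w\rangle.
\]
Inserting $\alpha+f(\cdot,m)=A^{*}u-p-\lambda$ and $\beta=B^{*}u$ from \eqref{sis1}, the $u$-terms recombine into $\llangle A(m_1-m)+B(w_1-w),u\rrangle=0$ because both pairs satisfy the state equation, the $\lambda$-term vanishes since $\int_\Omega m_1=\int_\Omega m=1$, and the remaining pressure term equals $-\int_{\ov\Omega}(m_1-m)\,\dd p\ge0$, since $p\ge0$, $m\equiv1$ on $\mbox{\rm spt}(p)$ and $m_1\le1$. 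Hence $\cJ_q(m_1,w_1)\ge\cJ_q(m,w)$ for every feasible $(m_1,w_1)$, i.e.\ $(m,w)$ solves \eqref{Pq}.

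I expect the main obstacle to be the necessity part: combining the abstract multiplier theorem, which produces $(u,\lambda,p)$, with the sum rule for the non-smooth, measure-valued subdifferential $\partial\cL_q$, and then matching each scalar, vector and measure component of the stationarity condition to the corresponding line of \eqref{sis1}. The subsequent regularity bootstrap, passing from $u\in W^{1,q'}_\diamond(\Omega)$ to $u\in W^{1,s}(\Omega)$ for all $s<d/(d-1)$ through the measure-data estimates, is the other delicate step; by contrast the sufficiency direction is a routine convexity computation.
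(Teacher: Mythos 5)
Your proposal is correct and follows essentially the same route as the paper: constraint qualification (Lemma \ref{qual_verified}) plus the abstract Lagrange multiplier theorem to get $(u,\lambda,p)\in N_{\cK}(G(m,w))$ with $0\in\partial_{(m,w)}\fL$, the subdifferential sum rule (the paper cites Ekeland--Temam where you cite Moreau--Rockafellar) to isolate $\partial\cL_q(m,w)\neq\emptyset$ and invoke Theorem \ref{subdifferentialcomputation} for $v\in L^q$, and elliptic regularity for the measure-data equation to get $u\in W^{1,s}$ for $s<d/(d-1)$. The only difference is that you write out the sufficiency direction as an explicit convexity computation where the paper simply cites \cite[Theorem 2.158]{BonSha}; your computation is correct.
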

\begin{proof} By Lemma \ref{qual_verified} the problem is qualified (see e.g. \cite[Chapter 2]{BonSha}). Thus, by classical results in convex analysis (see e.g.  \cite[Theorem 2.158 and Theorem 2.165]{BonSha}), we have the existence of $(u,p,\lambda)\in N_{\cK}(G(m,w))$  such that 
\begin{equation}\label{zerointhesubdi}\begin{array}{rcl} (0,0) \in \partial_{(m,w)} \fL(m,w,u,p,\lambda). \end{array}
\end{equation}
Since $ \cL_q$ is finite at $(1/|\Omega|,0)$   and the other terms   appearing in  $\fL$ are differentiable, by  \cite[Chapter 1, Proposition 5.6]{ekeland-temam}  and  \eqref{zerointhesubdi},  we must have that $\cL_{q}$ is subdifferentiable at $(m,w)$. Thus,    by Theorem \ref{subdifferentialcomputation} and \eqref{cononormala}   we get that $v\in L^{q}(\Omega)^{d}$ and there exists $(\alpha,\beta)\in \ov{\cA_{q'}}$ such that \eqref{sis1} holds true, with the first equation being an equality in $\Wqstarom$. Since, except by $A^{*}u$, all the other terms can be identified with  elements in $\fM(\ov{\Omega})$,  we have that  $A^{*}u$ can be identified to an element of  $\fM(\ov{\Omega})$. Using classical elliptic regularity theory (see \cite[Th\'eor\`eme 9.1]{stampacchia3}) we get that  $u\in W_\diamond^{1,s}(\Om)$ for any $s\in]1,d/(d-1)[$.  The fact that \eqref{sis1} is a sufficient condition follows also by the convexity of the problem (see \cite[Theorem 2.158]{BonSha}).
%
\end{proof}
As a corollary we  immediately obtain the following existence result for a  MFG type system with density constraints 
\begin{corollary}\label{mfgregularcase} There exists $(m, u,\mu, p,\lambda)\in W^{1,q}(\Om)\times W_{\diamond}^{1,s}(\Omega)\times \fM_+(\ov{\Omega})\times \fM_+(\ov{\Omega}) \times \R$  {\rm(}$s\in]1,d/(d-1)[${\rm)} such that
\begin{equation}
\left\{\begin{array}{rcl}
-\Delta u + \frac{1}{q'}|\nabla u|^{q'} +\mu - p - \l&=&f(x,m) \hspace{0.4cm} \iin\; \; \Om,\\[6pt]
\nabla u\cdot n&=&0 \hspace{0.4cm} \oon  \; \; \partial\Omega,\\[4pt]
-\Delta m - \diver\left(m |\nabla u|^\frac{2-q}{q-1}\nabla u\right) &=& 0 \hspace{0.4cm}\iin \; \; \Omega, \\[5pt]
\nabla m\cdot n&=&0  \hspace{0.4cm}  \oon  \; \; \partial\Omega,\\[4pt]
\ds\int_\Omega m\,\dd x=1, & \; &\ \ 0\le m\le 1 , \; \;  \iin\ \Omega,\\[6pt]
\mbox{{\rm spt}}(\mu) \subseteq \{m=0\},  & \; &   \mbox{{\rm spt}}(p) \subseteq \{m=1\},  
\end{array}\right.\tag{$MFG_q$}\label{MFGq}
\end{equation}
where the coupled system for $(u,m)$ is satisfied in the following weak sense: for all $\varphi \in C^{1}(\ov{\Om})$
\begin{equation}\label{interpretacionsistema}
\begin{array}{c}
\ds \int_\Om\nabla u\cdot\nabla\varphi\,\dd x +\int_{\Om}\frac{1}{r'}|\nabla u|^{r'}\varphi\,\dd x-\l\int_\Om\varphi\,\dd x +\int_{\ov{\Om}}\varphi\,\dd(\mu-p)=\int_\Om f(x,m(x))\varphi(x)\,\dd x,\\[6pt]
\ds \int_\Om\left(\nabla m+m |\nabla u|^\frac{2-q}{q-1}\nabla u\right)\cdot \nabla\varphi\,\dd x=0.
\end{array}
\end{equation}
\end{corollary}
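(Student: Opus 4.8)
The plan is to obtain Corollary \ref{mfgregularcase} as a direct translation of the optimality system \eqref{sis1} from Theorem \ref{mfg1}, using the explicit description of the subdifferential $\partial\cL_q(m,w)$ from Theorem \ref{subdifferentialcomputation} (with $r=q$) together with the structure of the normal cone \eqref{cononormalenG}. First I would take the element $(\alpha,\beta)\in\partial\cL_q(m,w)$ and $(u,p,\lambda)$ produced by Theorem \ref{mfg1}. By Theorem \ref{subdifferentialcomputation}, on $E_1^m=\{m>0\}$ we have $\beta=|v|^{q-2}v$ and $\alpha=-\tfrac{1}{q'}|v|^{q}$, where $v=(w/m)\one_{\{m>0\}}\in L^q(\Omega)^d$; moreover the singular part of $\alpha$ is concentrated on $E_0^m=\{m=0\}$. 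The second equation of \eqref{sis1}, namely $\beta=B^*u=-\nabla u$ (as an $L^{q'}$ function), is the key identification: it lets me replace $\beta$ by $-\nabla u$ everywhere and hence express $v$ in terms of $\nabla u$.

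The central computation is the Fenchel-type inversion. From $\beta=|v|^{q-2}v$ and $\beta=-\nabla u$ I would solve for $v$, obtaining $v=|\nabla u|^{q'-2}(-\nabla u)=-|\nabla u|^{\frac{2-q}{q-1}}\nabla u$ on $\{m>0\}$, using $q'-2=\tfrac{2-q}{q-1}$. Substituting $w=mv$ into the continuity equation $Am+Bw=0$, which in weak form reads $\int_\Omega(\nabla m-w)\cdot\nabla\varphi\,\dd x=0$, yields exactly the second weak equation in \eqref{interpretacionsistema}, i.e. the Fokker--Planck equation $-\Delta m-\diver\big(m|\nabla u|^{\frac{2-q}{q-1}}\nabla u\big)=0$ with the Neumann condition $\nabla m\cdot n=0$ encoded in the test-function formulation over $\varphi\in C^1(\ov\Omega)$. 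For the first (HJB) equation I would take the first line of \eqref{sis1}, an equality in $\fM(\ov\Omega)$, and unwind $A^*u$ against a test function $\varphi\in C^1(\ov\Omega)$: by definition $\langle A^*u,\varphi\rangle=\int_\Omega\nabla u\cdot\nabla\varphi\,\dd x$. Splitting $\alpha$ into its absolutely continuous part on $\{m>0\}$, where $\alpha=-\tfrac{1}{q'}|v|^{q}=-\tfrac{1}{q'}|\nabla u|^{q'}$ (again via $v=-|\nabla u|^{q'-2}\nabla u$ so that $|v|^q=|\nabla u|^{q'}$), and the nonpositive singular part, I set $\mu:=-\alpha^{\mathrm s}\in\fM_+(\ov\Omega)$, which by the theorem is supported in $\{m=0\}$. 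Rearranging $\alpha+f(\cdot,m)-A^*u+p+\lambda=0$ then gives precisely the first weak identity in \eqref{interpretacionsistema} and the concentration properties $\spt(\mu)\subseteq\{m=0\}$, $\spt(p)\subseteq\{m=1\}$ (the latter coming from \eqref{cononormalenG}).

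The main obstacle is the careful bookkeeping of the measure-valued HJB equation, specifically isolating the sign and support of the singular part to define $\mu$. Theorem \ref{mfg1} only asserts $(\alpha,\beta)\in\partial\cL_q(m,w)\subseteq\ov{\cA_{q'}}$, so a priori $\alpha$ is a general measure satisfying the constraint inequality \eqref{inequalitymeasures}; I must invoke the last assertion of Theorem \ref{subdifferentialcomputation} to guarantee that its singular part lives on $\{m=0\}$ and is nonpositive, so that $\mu=-\alpha^{\mathrm s}$ is a genuine nonnegative measure with the claimed support. A secondary point to verify is that all integrals in \eqref{interpretacionsistema} are well-defined against $\varphi\in C^1(\ov\Omega)$: the term $\int_{\ov\Omega}\varphi\,\dd(\mu-p)$ is a pairing of a continuous function with finite measures, while $\int_\Omega\tfrac{1}{q'}|\nabla u|^{q'}\varphi\,\dd x$ is finite since $\beta=-\nabla u\in L^{q'}(\Omega)^d$ forces $|\nabla u|^{q'}\in L^1(\Omega)$, and $\int_\Omega f(x,m)\varphi\,\dd x$ is finite because $m\in L^\infty$ and $f$ is continuous. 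The regularity $u\in W_\diamond^{1,s}(\Omega)$ for $s\in\,]1,d/(d-1)[$ is inherited directly from Theorem \ref{mfg1} (via the elliptic estimate of \cite{stampacchia3} applied to $A^*u\in\fM(\ov\Omega)$), so no additional work is needed there; the corollary is thus a matter of renaming and of reading off the weak formulation from the abstract optimality conditions.
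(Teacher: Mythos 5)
Your overall route is exactly the one the paper intends: Corollary \ref{mfgregularcase} is stated as an immediate consequence of Theorem \ref{mfg1}, obtained by unwinding \eqref{sis1} with the subdifferential description of Theorem \ref{subdifferentialcomputation}, the identification $\beta=B^*u=-\nabla u$, and the inversion $v=-|\nabla u|^{q'-2}\nabla u$ on $\{m>0\}$. All of that, including the Fokker--Planck substitution $w=mv$ and the verification that every term in \eqref{interpretacionsistema} makes sense, is correct.

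There is, however, one genuine (if small) gap: the definition $\mu:=-\alpha^{\mathrm s}$. Theorem \ref{subdifferentialcomputation} pins down $\alpha$ only on $E_1^m=\{m>0\}$ (where $\alpha\mres E_1^m=-\tfrac{1}{q'}|v|^{q}\,\dd x=-\tfrac{1}{q'}|\nabla u|^{q'}\,\dd x$) and tells you that the singular part lives on $E_0^m$; on the set $\{m=0\}$, which may well have positive Lebesgue measure, the absolutely continuous part of $\alpha$ is only constrained by $\alpha^{\mathrm{ac}}+\tfrac{1}{q'}|\beta|^{q'}\le 0$, so it can be strictly below $-\tfrac{1}{q'}|\nabla u|^{q'}$ there. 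If you keep only $-\alpha^{\mathrm s}$, the first equation of \eqref{sis1} rearranges to the HJB equation only up to an unaccounted nonnegative $L^1$ density on $\{m=0\}$. The correct choice is the full defect measure
\begin{equation*}
\mu:=-\Bigl(\alpha+\tfrac{1}{q'}|\nabla u|^{q'}\,\dd x\Bigr),
\end{equation*}
which is nonnegative because $(\alpha,\beta)\in\overline{\cA_{q'}}$ and $\beta=-\nabla u$ (inequality \eqref{inequalitymeasures}), and which vanishes on $E_1^m$ by the subdifferential characterization, hence is supported in the closed set $\{m=0\}$ (closedness uses $q>d$). With this single correction the rest of your argument goes through verbatim and reproduces \eqref{MFGq} and \eqref{interpretacionsistema}.
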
 \smallskip
Let us define $E_2^m:=\left\{x\in\ov\Om: 0 < m(x) < 1\right\}$. Note that by the continuity of $m$, $E_2^m$ is an open set.
\begin{remark}[The uniqueness of the solutions] Assuming that the coupling $f$ is strictly increasing in its second variable, the objective functional in \eqref{Pq} becomes strictly convex in the $m$ variable (and the set $\cK_P$ is convex). Thus,  the function $m\in W^{1,q}(\Om)$ in \eqref{MFGq}  is unique, which implies also the uniqueness of  $w\in L^q(\Om)^d$. In particular,   $\nabla u\in L^{q'}(\Om)^d$ is also unique on $E_1^m$.   The first identity in \eqref{interpretacionsistema} with $\varphi\in C_c^1(E_2^m)$ implies   the uniqueness of  $\l\in\R$.  If   $\varphi\in C_c^1(E_1^m)$ we obtain 
$$\ds \int_\Om\nabla u\cdot\nabla\varphi\,\dd x +\int_{\Om}\frac{1}{r'}|\nabla u|^{r'}\varphi\,\dd x-\l\int_\Om\varphi\,\dd x - \int_\Om\varphi\,\dd p=\int_\Om f(x,m(x))\varphi(x)\,\dd x,$$
which together with the condition $\spt(p)\subseteq\{m=1\}$ yields the uniqueness of  $p$. Using   \cite[Theorem 3.4]{BarPor} we obtain that  on $E_1^m$ $u\in W^{1,q'}(\Om)$ is unique up to an additive constant which may differ on each connected component of the set $E_1^m$. In general, we cannot expect uniqueness for $\mu$.
%
%
%
\end{remark}

Now, let us discuss some interior regularity properties for the solutions on the open set $E_2^m$. Our approach is based on a bootstrapping argument. 

\begin{proposition}\label{regularity_boots} There exists  $\gamma_0 \in ]0,1[$ such that 
\begin{equation}\label{reg1} u\in C^{1,\g_0}_{\rm{loc}}(E^m_2)\;\;\; {\rm{and}}\;\;\; m\in C^{1,\g_0}_{\rm{loc}}(E^m_2).\end{equation}
If in addition, $f\in C_{loc}^{j,\g_1}({\ov\Om}\times\R)$ for $j\in\{0,1\}$ and some $\gamma_1 \in ]0,1[$, we have that for some $\gamma_2 \in ]0,1[$
\begin{equation}\label{reg2}u\in C^{2+j,\g_2}_{\rm{loc}}(E_2^m).\end{equation}
\end{proposition}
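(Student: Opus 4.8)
The plan is to exploit the fact that on the open set $E_2^m=\{0<m<1\}$ both density constraints are inactive, so that the multiplier measures drop out and $(u,m)$ solve a genuine local PDE system to which a bootstrap of elliptic estimates applies. Since $\spt(\mu)\subseteq\{m=0\}$ and $\spt(p)\subseteq\{m=1\}$ are disjoint from $E_2^m$, testing \eqref{interpretacionsistema} with $\varphi\in C_c^1(E_2^m)$ kills the two measure terms and yields, in the distributional sense on $E_2^m$,
\begin{equation*}
-\Delta u+\tfrac{1}{q'}|\nabla u|^{q'}-\l=f(\cdot,m),\qquad -\Delta m=\diver\!\left(m\,|\nabla u|^{q'-2}\nabla u\right),
\end{equation*}
where I have used $\tfrac{2-q}{q-1}=q'-2$. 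The starting regularity is $m\in C^{0,\alpha}(\ov\Om)$ (Morrey, since $q>d$), with $m$ locally bounded away from $0$ and $1$ on $E_2^m$, together with $\nabla u\in L^{q'}_{\rm loc}(E_2^m)$. The latter follows from Theorem \ref{mfg1}: there $\b=B^*u=-\nabla u$ and $\b\mres E_1^m=|v|^{q-2}v$ with $v\in L^q$, so that $|\nabla u|^{q'}=|v|^{q}\in L^1_{\rm loc}$.

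The core of the argument, and its main difficulty, is to upgrade the integrability of $\nabla u$ through the superlinear term $|\nabla u|^{q'}$. I would run a self-improving iteration on the HJB equation written as $-\Delta u=h$ with $h:=\l+f(\cdot,m)-\tfrac1{q'}|\nabla u|^{q'}$; since $f$ is continuous and $m\in C^0$, the datum $\l+f(\cdot,m)$ lies in $L^\infty_{\rm loc}(E_2^m)$ and plays no role in the iteration. If $\nabla u\in L^{t}_{\rm loc}$, then $h\in L^{t/q'}_{\rm loc}$, and interior Calder\'on--Zygmund estimates give $u\in W^{2,t/q'}_{\rm loc}$, whence $\nabla u\in L^{(t/q')^*}_{\rm loc}$ by Sobolev. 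Setting $\sigma=t/q'$, the new exponent is $\Phi(\sigma)=d\sigma/(d-\sigma)$, which strictly exceeds $t=q'\sigma$ precisely when $\sigma>d/q$. Starting from $t_0=q'$, i.e. $\sigma_0=1>d/q$ (this is exactly where the hypothesis $q>d$ enters in an essential way), the sequence $\sigma_n$ is strictly increasing, has no fixed point in $\,]d/q,d[\,$, and $\Phi(\sigma)\to+\infty$ as $\sigma\to d^-$; hence after finitely many steps $\sigma_n>d$, so $\nabla u\in W^{1,\sigma_n}_{\rm loc}\hookrightarrow C^{0,\g_0}_{\rm loc}$ by Morrey, i.e. $u\in C^{1,\g_0}_{\rm loc}(E_2^m)$.

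With $u\in C^{1,\g_0}_{\rm loc}$ in hand I would turn to the Fokker--Planck equation. The drift $b:=m\,|\nabla u|^{q'-2}\nabla u=m\,G(\nabla u)$, where $G(p)=|p|^{q'-2}p$ is $(q'-1)$-H\"older continuous (note $q'\in\,]1,2[$ since $q>d$), is then locally $C^{0,\g'}$ for some $\g'>0$ as a product of H\"older functions. Schauder-type estimates for $-\Delta m=\diver(b)$ with H\"older right-hand side in divergence form give $\nabla m\in C^{0,\g'}_{\rm loc}$, i.e. $m\in C^{1,\g_0}_{\rm loc}(E_2^m)$ after relabelling the exponent; this establishes \eqref{reg1}.

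For \eqref{reg2} I would bootstrap once or twice more by Schauder theory. If $f\in C^{0,\g_1}_{\rm loc}$, then $f(\cdot,m)\in C^{0,\cdot}_{\rm loc}$ and, since $\nabla u\in C^{0,\g_0}_{\rm loc}$, the map $|\nabla u|^{q'}$ is H\"older; hence $-\Delta u\in C^{0,\cdot}_{\rm loc}$ and Schauder gives $u\in C^{2,\g_2}_{\rm loc}$. If moreover $f\in C^{1,\g_1}_{\rm loc}$, then $f(\cdot,m)\in C^{1,\cdot}_{\rm loc}$ (using $m\in C^{1,\g_0}$), while $\nabla(|\nabla u|^{q'})=q'\,|\nabla u|^{q'-2}(D^2u)\nabla u$ is H\"older because $|\nabla u|^{q'-2}\nabla u=G(\nabla u)\in C^{0,q'-1}_{\rm loc}$ and $D^2u\in C^{0,\cdot}_{\rm loc}$; thus $-\Delta u\in C^{1,\cdot}_{\rm loc}$ and Schauder yields $u\in C^{3,\g_2}_{\rm loc}$. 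The restriction $j\in\{0,1\}$ is sharp for this scheme: reaching $u\in C^{4}$ would require $|\nabla u|^{q'}\in C^{2}$, hence control of $D^2(|p|^{q'})\sim|p|^{q'-2}$, which is singular at $p=0$ when $q'<2$. Throughout, the only genuine obstacle is the gradient bootstrap of the second paragraph; the remaining steps are routine interior Calder\'on--Zygmund and Schauder estimates applied on nested subdomains of $E_2^m$.
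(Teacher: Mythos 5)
Your overall strategy coincides with the paper's: on $E_2^m$ the multiplier measures vanish and $f(\cdot,m)$ is bounded, so one upgrades the integrability of $\nabla u$ through the Hamilton--Jacobi equation until $u\in W^{2,k}_{\rm loc}$ with $k>d$, hence $u\in C^{1,\gamma}_{\rm loc}$; one then gets $m\in C^{1,\gamma_0}_{\rm loc}$ from a Schauder estimate for the divergence-form Fokker--Planck equation, using that $p\mapsto |p|^{q'-2}p$ is $(q'-1)$-H\"older because $q'<2$; and finally one bootstraps $u$ by Schauder theory for $j=0,1$. Your recursion $\sigma_{n+1}=d\sigma_n/((d-\sigma_n)q')$ with unique fixed point $d/q<1$ is exactly the paper's $\delta_{i+1}=d\delta_i/((d-\delta_i)q')$ together with its increment bound coming from $q'+d-dq'>0$.

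There is, however, one step that fails as written: the base of your iteration. You start from $\nabla u\in L^{q'}_{\rm loc}$, i.e. $\sigma_0=t_0/q'=1$, and invoke interior Calder\'on--Zygmund estimates to deduce $u\in W^{2,1}_{\rm loc}$ from $-\Delta u=h\in L^{1}_{\rm loc}$. This estimate is false at the endpoint: the map $h\mapsto D^2u$ is not bounded on $L^1$ (it is only of weak type $(1,1)$), so the first application of Calder\'on--Zygmund needs an exponent strictly greater than $1$. The repair is immediate and is precisely what the paper does: Theorem \ref{mfg1} yields $u\in W^{1,s}(\Omega)$ for every $s\in\,]1,d/(d-1)[\,$, and since $q>d$ one has $q'<d/(d-1)$, so one may pick $r_1\in\,]q',d/(d-1)[\,$ and start from $\sigma_0=r_1/q'>1$; the monotonicity condition $\sigma>d/q$ is still met because $d/q<1$. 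With this correction the remainder of your argument goes through, and your closing observation on why the scheme stops at $j=1$ (the singularity of $D^2(|p|^{q'})$ at $p=0$ for $q'<2$) is a pertinent remark not made in the paper.
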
 

\begin{proof}
{\it Step 1.} We show that there exists $k>d$ such that $u\in W_{\rm{loc}}^{2,k}(E_2^m)$.  By the classical Sobolev embeddings, this implies  that $u\in C_{\rm{loc}}^{1,\g}(E_2^m)$ (for some $\gamma \in ]0,1[$).   Let  $r_1\in ]q',d/(d-1)[$. Since $u\in W_\diamond^{1,r_1}(\Om)$   we have  that $|\nabla u|^{q'}\in L^{r_1/q'}(\Om)$. The continuity of $f$ and the density constraint on $m$ imply that $f(\cdot, m(\cdot))\in L^{\infty}(\Omega)$. Thus, denoting by $\d_1:=r_1/q'$, classical regularity theory (see \cite{gilbarg}) yields $u\in W_{\rm{loc}}^{2,\d_1}(E_2^m)$. In particular,    the Sobolev inequality (see e.g. \cite{adams}) yields $u\in W_{\rm{loc}}^{1,\frac{d\d_1}{d-\d_1}}(E_2^m)$ and so $|\nabla u|^{q'}\in L_{\rm{loc}}^{\frac{d\d_1}{q'(d-\d_1)}}(E_2^m)$. We easily check that $\delta_2:= d\delta_1/q'(d -\delta_1)>\delta_1$ 
%
%
%
 and so $u\in W^{2,\d_2}_{\rm{loc}}(E_2^m)$.
 Let us define  the sequence $\d_{i+1}:=  \frac{d\d_i}{(d-\d_i)q'}$. Since $\delta_{i+1}-\delta_{i} \geq (q'+d-dq')/(d-\delta_i)q'$ and $q'+d-dq'>0$ , after a finite number of steps we get the existence of $  i^*\ge 2$ such that $k:=\d_{i^*}>d$ and $u\in W_{\rm{loc}}^{2,k}(E_2^m)$.   \smallskip\\
{\it Step 2.} Let us prove that $m\in C_{\rm{loc}}^{1,\g_0}(E_2^m)$ for some $\gamma_0 \in ]0,1[$. Since $m\in W^{1,q}(\Om)$ and $q>d$, we already have that $m$ is  H\"older continuous.   Having $u\in C_{\rm{loc}}^{1,\g}(E_2^m)$, this implies that  $\nabla u\in C_{\rm{loc}}^{0,\g}(E_2^m)^d,$ hence $m|\nabla u|^{\frac{2-q}{q-1}}\nabla u\in C_{\rm{loc}}^{0,\hat{\gamma}}(E_2^m)^d$, for some $\hat{\gamma}\in ]0,1[$. Using a Schauder-type estimate (see  \cite[Theorem 5.19]{giaquinta-martinazzi}) we get that $m\in C_{\rm{loc}}^{1,\gamma'}(E_2^m)$ for some $\gamma' \in ]0,1[$. 

{\it Step 3.}  Using the above regularity for $m$, if $f\in C_{\rm{loc}}^{0,\gamma_1}(\ov{\Omega}\times \R)$, the   local H\"older regularity for  $\frac{1}{q'}|\nabla u|^{q'}$ and \cite[Corollary 6.9]{gilbarg} imply that $u \in  C_{\rm{loc}}^{2,\g''}(E_2^m)$ for some $\gamma''\in ]0,1[$. Finally,  if $f\in C_{\rm{loc}}^{1,\gamma_1}(\ov{\Omega}\times \R)$, the local H\"older regularity of $\nabla m$ and of $D^{2}u$ imply that  $u \in  C_{\rm{loc}}^{3,\g'''}(E_2^m)$ for some $\gamma'''\in ]0,1[$.

\end{proof}
%
%

\subsection{The dual problem}\label{subsec:daul}
In order to write explicitly the dual problem we will need the following Lemma concerning the Legendre-Fenchel transform of $\cF.$ 
\begin{lemma}\label{cF_transform}
Let $\cF$ be defined by \eqref{definitionF}. Then its Legendre-Fenchel transform $\cF^*:\Wqstarom\to\overline{\R}$ is given by 
\begin{equation}\label{definiciondeFmay}
\cF^*(m^*)=\left\{
\begin{array}{ll}
\ds\int_\Omega F^*(x,m^*(x))\,\dd x, & \iif\ m^*\in\fM_{ac}(\ov{\Omega}),\\
+\infty, & \rm{otherwise},
\end{array}
\right.
\end{equation}
where $F^*$ denotes the Legendre-Fenchel transform of $F$ w.r.t. the second variable.
\end{lemma}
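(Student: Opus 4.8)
The plan is to compute $\cF^*(m^*)=\sup_{m\in W^{1,q}(\Omega)}\{\langle m^*,m\rangle-\cF(m)\}$ by treating separately the absolutely continuous and the singular behaviour of $m^*$, exploiting that $q>d$ so that $W^{1,q}(\Omega)$ embeds continuously into, and is dense in, $C(\ov\Omega)$. I first record the elementary upper bound: for $m^*\in\fM_{ac}(\ov\Omega)$ (identified with its $L^1$ density) and any $m\in W^{1,q}(\Omega)\subseteq L^\infty(\Omega)$, the pointwise Fenchel inequality $m^*(x)m(x)-F(x,m(x))\le F^*(x,m^*(x))$ integrates to $\langle m^*,m\rangle-\cF(m)\le\int_\Omega F^*(x,m^*(x))\,\dd x$, whence $\cF^*(m^*)\le\int_\Omega F^*(x,m^*(x))\,\dd x$. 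Note $F^*(x,\cdot)\ge -F(x,0)=0$, so the right-hand side is well defined in $[0,+\infty]$.

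Next I would show that finiteness of $\cF^*(m^*)$ forces $m^*$ to be an absolutely continuous measure, which simultaneously settles the ``$+\infty$ otherwise'' part of the claim. If $\cF^*(m^*)=:c_0<\infty$, then $\langle m^*,m\rangle\le\cF(m)+c_0$; since $\cF(m)=\int_\Omega F(x,m(x))\,\dd x\le\int_\Omega\max_{|t|\le1}F(x,t)\,\dd x<\infty$ whenever $\|m\|_{C(\ov\Omega)}\le1$, testing with $\pm m$ shows that $m^*$ is bounded for the sup-norm on $W^{1,q}(\Omega)$, hence extends by density to an element of $\fM(\ov\Omega)$. Writing $m^*=\mu^{ac}+\mu^s$ with $\mu^s=\mu^s_+-\mu^s_-$, I rule out $\mu^s\neq0$ by a concentration argument. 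Assuming, say, $\mu^s_+\neq0$, inner regularity gives a compact $K$ with $|K|=0$ and $\mu^s_+(K)>0$; taking Lipschitz bumps $\phi_\delta$ with $0\le\phi_\delta\le1$, $\phi_\delta\equiv1$ on $K$ and support in $U_\delta:=\{\dist(\cdot,K)<\delta\}$, one gets $\liminf_{\delta\downarrow0}\langle m^*,\phi_\delta\rangle\ge\mu^s_+(K)>0$ (the $\mu^{ac}$ and $\mu^s_-$ contributions vanish, as $|U_\delta\cap\Omega|\to0$ and $\mu^s_-(U_\delta)\to\mu^s_-(K)=0$), whereas $\cF(t\phi_\delta)\le|U_\delta\cap\Omega|\max_{\ov\Omega\times[0,t]}F\to0$ for each fixed $t>0$. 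Hence $\cF^*(m^*)\ge t\,\mu^s_+(K)$ for all $t>0$, so $\cF^*(m^*)=+\infty$, a contradiction; the case $\mu^s_-\neq0$ is symmetric, testing with $-t\phi_\delta$.

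It then remains to establish the reverse inequality $\cF^*(m^*)\ge\int_\Omega F^*(x,m^*(x))\,\dd x$ for $m^*\in\fM_{ac}(\ov\Omega)$. Here I would approximate a pointwise near-maximiser of the Fenchel problem by admissible functions: for $\e>0$, a measurable selection yields $\hat m$ with $m^*(x)\hat m(x)-F(x,\hat m(x))\ge\min(F^*(x,m^*(x)),1/\e)-\e$ a.e.; truncating to $\hat m_N:=\hat m\,\one_{A_N}$ on $A_N:=\{|\hat m|\le N,\ |m^*|\le N\}\cap\{\dist(\cdot,\partial\Omega)>1/N\}$ produces bounded, compactly supported functions, and mollification yields $m_\rho\in C_c^\infty(\Omega)\subseteq W^{1,q}(\Omega)$ with $m_\rho\to\hat m_N$ in $L^q$ and a.e., and $\|m_\rho\|_\infty\le\|\hat m_N\|_\infty$. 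Dominated convergence (using $m^*\in L^1$, the continuity of $F(x,\cdot)$, and $|F(x,m_\rho)|\le\max_{|t|\le\|\hat m_N\|_\infty}|F(x,t)|\in L^1(\Omega)$) gives $\langle m^*,m_\rho\rangle-\cF(m_\rho)\to\int_\Omega[m^*\hat m_N-F(x,\hat m_N)]\,\dd x$, which, letting $N\to\infty$ and $\e\downarrow0$ and invoking monotone convergence, exceeds every level below $\int_\Omega F^*(x,m^*)\,\dd x$.

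I expect the main obstacle to be precisely this last step: producing a recovery sequence inside $W^{1,q}(\Omega)$ that controls the (growth-unconstrained, possibly non-superlinear) integrand $\cF$, together with the measurable-selection ingredient. The truncation-then-mollification scheme above is what makes the passage to the limit legitimate without any growth hypothesis on $f$; alternatively, once the integrand is recognised as a normal convex integrand, one may quote the classical conjugation formula for integral functionals on $L^q$ (see \cite{ekeland-temam}) and transfer it through the dense embedding $W^{1,q}(\Omega)\hookrightarrow L^q(\Omega)$, at the cost of a qualification argument that the direct construction avoids.
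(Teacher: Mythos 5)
Your argument is correct, but it is worth pointing out that the paper does not prove this lemma at all: its ``proof'' is a one-line citation of \cite[Section 2]{brezis}, where precisely this conjugation formula for convex integral functionals restricted to a Sobolev space is established. What you have done is reconstruct, essentially verbatim, the two key mechanisms of that classical proof: (a) the concentration argument on a Lebesgue-null compact $K$ carrying part of the singular component, with Lipschitz bumps $t\phi_\delta$ whose energy $\cF(t\phi_\delta)$ vanishes as $\delta\downarrow 0$ while $\langle m^*,t\phi_\delta\rangle$ stays bounded below by $t\,\mu^s_+(K)$, forcing $\cF^*=+\infty$ off $\fM_{ac}(\ov{\Omega})$; and (b) the recovery of the lower bound $\cF^*(m^*)\ge\int_\Omega F^*(x,m^*)\,\dd x$ via a measurable $\e$-maximizer, truncation, and mollification into $C^\infty_c(\Omega)\subseteq W^{1,q}(\Omega)$, which correctly sidesteps the absence of growth conditions on $f$. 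Two points deserve to be made explicit if this were written out in full: the domination $\max_{|t|\le N}|F(\cdot,t)|\in L^1(\Omega)$ (and likewise the finiteness of $\int_\Omega\max_{|t|\le 1}F(x,t)\,\dd x$ in your step forcing $m^*\in\fM(\ov{\Omega})$) requires $f$ to be locally bounded uniformly up to $\partial\Omega$, an assumption the paper uses implicitly elsewhere (e.g.\ when asserting $f(\cdot,m(\cdot))\in L^\infty(\Omega)$); and the $\e$-maximizer selection should be justified, e.g.\ by noting that $F^*(x,p)=\sup_{t\in\mathbb{Q}}\{pt-F(x,t)\}$ makes the relevant multifunction measurable. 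The trade-off between the two routes is the usual one: the citation is shorter and places the result in its proper context, while your direct proof is self-contained and makes visible exactly which structural features (density of $W^{1,q}(\Omega)$ in $C(\ov{\Omega})$ for $q>d$, $F(x,0)=0$, convexity of $F(x,\cdot)$) the formula actually depends on.
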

\begin{proof}
The result is a consequence of  \cite[Section 2]{brezis}. 
\end{proof}
We recall that given a Banach space $X$ and a convex closed set $K\subseteq X$,  the  {\it support function}  $\s_K:X^*\to\ov\R$ is defined as
$$\s_K(x^*):=\sup_{x\in K}\langle x^*,x\rangle_{X^*,X} \; \hspace{0.5cm} \forall x^*\in X^*.$$
\begin{proposition}\label{dual} The dual problem of \eqref{Pq} (in the sense of convex analysis) has at least one solution and can be written as 
\begin{equation}
-\min_{(u,p,\l,a)\in\cK_D}\left\{\int_\Omega F^*(x,a)\,\dd x+\l+p(\ov{\Omega})\right\} \tag{$PD_q$}\label{PDq}
\end{equation}
where
$$\cK_D:=\left\{(u,p,\l,a)\in\Wdqpom\times  \fM_+(\ov{\Omega}) \times\R\times \fM_{ac}(\ov{\Omega}):\ A^* u +\frac{1}{q'}|B^* u|^{q'} - p - \l \le a\right\}$$ where the inequality has to be understood in the sense of  measures. 
\end{proposition}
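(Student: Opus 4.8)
The plan is to obtain \eqref{PDq} as the Fenchel--Rockafellar dual of \eqref{Pq} (see \cite[Chapter 1]{ekeland-temam} and \cite[Chapter 2]{BonSha}), starting from the writing $\inf_{(m,w)\in X}\{\cJ_q(m,w)+\chi_{\cK}(G(m,w))\}$. Because of the normalization $\int_\Omega m\,\dd x-1$ the map $G$ is affine, so I would first split off its constant part, keeping the linear operator $G_0(m,w):=\big(Am+Bw,\int_\Omega m\,\dd x,Im\big)$ and replacing $\cK$ by $\cK':=\{0\}\times\{1\}\times\cC$. Identifying $Y^*=\Wdqpom\times\R\times\fM(\ov{\Omega})$ (using reflexivity of $\Wdqpom$ and $C(\ov{\Omega})^*=\fM(\ov{\Omega})$), the dual then reads
\[\sup_{(u,\l,\pi)\in Y^*}\Big\{-\cJ_q^*\big(G_0^*(u,\l,\pi)\big)-\s_{\cK'}\big(-(u,\l,\pi)\big)\Big\}.\]
The constraint qualification \eqref{qualificationproblemq} proved in Lemma \ref{qual_verified} is exactly what is needed to guarantee at once the absence of a duality gap and the solvability of the dual (existence of Lagrange multipliers), so that the supremum is attained; this is the source of the ``at least one solution'' assertion.

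The computation then reduces to three conjugates. For the support function I would check directly that $\s_{\cK'}\big(-(u,\l,\pi)\big)=-\l+\sup_{z\le1}\big(-\int_{\ov{\Omega}}z\,\dd\pi\big)$, which is finite precisely when $\pi\le0$; writing $p:=-\pi\in\fM_+(\ov{\Omega})$ it equals $p(\ov{\Omega})-\l$. The adjoint is read off from the definitions of $A^*$, $B^*$, $I^*$ as $G_0^*(u,\l,-p)=\big(A^*u+\l-p,\,B^*u\big)$, where, as in the remark after \eqref{lagrangian}, $I^*p$ is identified with $p$.

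The decisive step is the transform of $\cJ_q=\cL_q+\cF\circ P$ with $P(m,w)=m$. Since $\cF\circ P$ is finite and continuous on all of $X$ (as $F$ is continuous and $W^{1,q}(\Omega)\hookrightarrow C(\ov{\Omega})$), the theorem on the conjugate of a sum applies and yields the \emph{exact} infimal convolution $\cJ_q^*=\cL_q^*\,\infconv\,(\cF\circ P)^*$. By Lemma \ref{subdiff_cost} one has $\cL_q^*=\chi_{\ov{\cA_{q'}}}$, while $(\cF\circ P)^*(\zeta,\eta)=\cF^*(\zeta)+\chi_{\{0\}}(\eta)$ with $\cF^*$ supported on $\fM_{ac}(\ov{\Omega})$ and equal to $\int_\Omega F^*(x,\cdot)\,\dd x$ by Lemma \ref{cF_transform}. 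Carrying out the convolution forces the second component onto the $\cL_q^*$ factor, and membership in $\ov{\cA_{q'}}$ becomes, via \eqref{expressionovA}, the inequality $\zeta_1+\frac{1}{q'}|\eta|^{q'}\le0$; setting $a:=\zeta-\zeta_1$ I obtain
\[\cJ_q^*(\zeta,\eta)=\min\Big\{\int_\Omega F^*(x,a)\,\dd x\;:\;a\in\fM_{ac}(\ov{\Omega}),\ a\ge\zeta+\tfrac{1}{q'}|\eta|^{q'}\Big\},\]
which is precisely the term of \eqref{PDq} in which $a$ appears as a free dual variable.

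Finally I would substitute $\zeta=A^*u+\l-p$ and $\eta=B^*u$ and assemble the three pieces: the inequality $a\ge\zeta+\frac{1}{q'}|\eta|^{q'}$ becomes the defining constraint of $\cK_D$, and collecting the constants $-\l$ (from $\s_{\cK'}$) and $p(\ov{\Omega})$ turns the supremum into $-\min_{\cK_D}\{\int_\Omega F^*(x,a)\,\dd x+\l+p(\ov{\Omega})\}$ after the harmless relabelling $\l\mapsto-\l$. I expect the real work to lie not in any single estimate but in the bookkeeping: tracking the affine shift and the support function so that the additive constants $\l$ and $p(\ov{\Omega})$ and the sign of $\l$ land correctly in the objective, and justifying exactness of the infimal convolution so that a genuine optimal $a$ exists. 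For the attainment one may also argue explicitly, which pins down the optimal $a$: given a solution $(m,w)$ of \eqref{Pq} with multipliers $(u,p,\l)$ and $(\a,\b)\in\partial\cL_q(m,w)$ from Theorem \ref{mfg1}, the choice $a:=f(\cdot,m)\in L^\infty(\Omega)\subseteq\fM_{ac}(\ov{\Omega})$ is feasible for $\cK_D$, because $\b=B^*u$, $A^*u-p-\l=\a+f(\cdot,m)$ and $\a+\frac{1}{q'}|\b|^{q'}\le0$ give $A^*u+\frac{1}{q'}|B^*u|^{q'}-p-\l=\a+\frac{1}{q'}|\b|^{q'}+f(\cdot,m)\le f(\cdot,m)=a$; pairing the first identity in \eqref{sis1} with $m$ and using $\spt(p)\subseteq\{m=1\}$ together with the Fenchel identity $F^*(x,f(x,m))=f(x,m)\,m-F(x,m)$ then shows that the dual value at $(u,p,\l,a)$ equals $-\cJ_q(m,w)$, i.e. this tuple is optimal.
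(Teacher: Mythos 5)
Your proposal is correct and follows essentially the same route as the paper: Lagrangian/Fenchel--Rockafellar duality for $\inf\{\cJ_q+\chi_{\cK}\circ G\}$, attainment of the dual supremum via the qualification condition of Lemma \ref{qual_verified} and \cite[Theorem 2.165]{BonSha}, the sum rule $\cJ_q^*=\cL_q^*\,\infconv\,\cF^*$ combined with Lemmas \ref{subdiff_cost} and \ref{cF_transform}, and the computation of $\s_{\cC}(p)=p(\ov{\Omega})$ on $\fM_+(\ov{\Omega})$; your repackaging of the affine constraint into $\cK'=\{0\}\times\{1\}\times\cC$ and the relabelling $\l\mapsto-\l$ are cosmetic. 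The only substantive divergence is in establishing that the inner minimum over $a$ is attained: the paper argues directly with a minimizing sequence $a_n$, an $L^1$ bound obtained by testing $F^*$ against $\mathrm{sgn}(a_n)$, weak-$*$ compactness in $\fM(\ov{\Omega})$ and lower semicontinuity, whereas you invoke the exactness clause of the conjugate-of-a-sum theorem (which is indeed part of \cite[Theorem 9.4.1]{attouch}, so this is legitimate and slightly shorter) and, alternatively, exhibit the explicit minimizer $a=f(\cdot,m)$ from the optimality system \eqref{sis1} --- a verification the paper only carries out afterwards, in the reverse direction, when it recovers \eqref{sis1} from primal--dual optimality.
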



\begin{proof} The dual problem of \eqref{Pq} can be written as
\begin{equation}\label{primerdual}\max_{\begin{subarray}{c}
(u,p,\l)\in\\
\Wdqp(\Omega)\times\fM(\ov{\Omega})\times\R
\end{subarray}
}\left\{\inf_{\begin{subarray}{c}
(m,w)\in\\
W^{1,q}(\Omega)\times L^q(\Omega)^{d}
\end{subarray}
}\fL(m,w,u,p,\l)-\s_{\cK}(u,\lambda,p)\right\} ,
\end{equation}
where $\fL$ is defined in \eqref{lagrangian} and we recall that $\cK:=\{0\}\times\{0\}\times \cC$. The fact that we have a $\max$ instead of a $\sup$ in \eqref{primerdual} is justified by Lemma  \ref{qual_verified} and  \cite[Theorem 2.165]{BonSha}.  Now, note that 
\begin{equation}\label{expressionfuncionsop}\s_{\cK}(u,\lambda,p)= \s_{\cC}(p)=\left\{\begin{array}{ll}  p(\ov{\Omega}) & \mbox{if } p  \in \fM_{+}(\ov{\Omega}), \\[4pt]
+\infty  & \mbox{otherwise.}\end{array}\right.\end{equation}
%
On the other hand,  we have that 
\begin{align*}
\inf_{(m,w)}\fL(m,w,u,p,\l)&=-\sup_{(m,w)}-\fL(m,w,u,p,\l),\\
& = -\sup_{(m,w)}\left\{\left\langle A^*u - p -\l, m \right\rangle + \int_{\Omega}B^*u \cdot w \dd x -\cJ_q(m,w)\right\}-\l,\\
& = - \cJ_q^*(A^*u-p-\l,B^*u)-\l.
\end{align*}
Since there exists $(m,w)\in \dom(\cL_q)$ at which $\ds\cF$ is continuous (take for example $(m,w)=(1/|\Om|,0)$),   for any $(\a,\b)\in\Wqstarom\times L^{q'}(\Omega)^d$   we have that (see e.g. \cite[Theorem 9.4.1]{attouch})
\begin{equation}\label{expressionconjugate}
\begin{array}{ll}
\cJ_q^*(\a,\b)&=\left(\cL_q+\cF\right)^*(\a,\b),\\[4pt]
&= \inf_{a\in\Wqstarom}\left\{\cL_q^*(\a-a,\b)+\cF^*(a)\right\},\\[4pt]
&= \inf_{a\in\Wqstarom}\left\{\chi_{\overline{\cA_{q'}}}(\a-a,\b)+\cF^*(a)\right\},\\
&= \inf_{a\in\fM_{ac}(\ov{\Omega})}\left\{\ds \int_\Omega F^*(x,a(x))\,\dd x:\ \a+\frac{1}{q'}|\b|^{q'}\le a\right\},
\end{array}
\end{equation}
where   we have used Lemma \ref{subdiff_cost} and Lemma \ref{cF_transform}. 
Let us prove that the above minimization problem has a solution. First,  by \eqref{definiciondeFmay} the integral functional is l.s.c. with respect to the weak$-\star$ topology of measures. Let us take a minimizing sequence $a_n\in L^1(\Omega).$ There exists a    constant $C>0$ such that 
$$C\ge \int_\Omega F^*(x,a_n(x))\,\dd x\ge \int_\Omega \left[a_n(x) y(x)-F(x,y(x))\right]\dd x,\;\;\forall y\in L^{\infty}(\Omega).$$
By choosing $y(x)= {\rm sgn}(a_n(x))$ (which is equal to $1$ if $a_n(x)\geq 0$ and $-1$ if not), we obtain that $a_n$ is   bounded in $L^1(\Omega)$. 

Therefore, when the sequence $a_n$ is identified to a sequence of measures, we get a  weakly$-*$ convergent subsequence to   some $a\in\fM(\ov{\Omega}).$ The constraint is convex and closed with respect to this convergence, so by the lower semicontinuity  of the objective functional we have that $a$ is a solution and,  by Lemma \ref{cF_transform},  $a\in \fM_{ac}(\ov{\Omega})$ as well.  Using this result and \eqref{primerdual}, \eqref{expressionfuncionsop} and \eqref{expressionconjugate}, the conclusion follows.  
\end{proof}

Using the dual problem, let us provide an alternative, but related  way, to obtain first order necessary and sufficient optimality conditions. By  Theorem \ref{existence} and Proposition \ref{dual} we know that that there exist $(m,w)\in \cK_P$ and $(u,p,\l,a)\in\cK_D$ optimizers for \eqref{Pq} and \eqref{PDq} respectively. Moreover, since Lemma \ref{qual_verified} implies that  problem \eqref{Pq} is qualified, by  \cite[Theorem 2.165]{BonSha} problem \eqref{PDq} has the same value as problem \eqref{Pq}. Therefore,
\begin{equation}\label{condicionprimaligualdual}\begin{array}{c}\cL_q(m,w)+\cF(m)=- \ds \int_\Omega F^*(\cdot,a)\,\dd x - \l - \s_\cC(p), \\[4pt]
\ds Am + Bw=0, \; \; \;  m\le 1,\;\;\; \int_\Om m\,\dd x=1,\;\;\; A^*u-p-\l-a+\frac{1}{q'}|B^*u|^{q'}\le 0.\end{array}\end{equation}
Using the above relations, we obtain
\begin{align*}
0&=\cF(m)+\int_\Omega F^*(\cdot,a)\,\dd x+\cL_q(m,w)+\chi_{\overline{\cA_{q'}}}(A^*u-p-\l-a,B^*u)+\l+\s_\cC(p),\\[4pt]
&=\cF(m)+\int_\Omega F^*(\cdot,a)\,\dd x+\cL_q(m,w)+\cL_q^*(A^*u-p-\l-a,B^*u)+\l+\s_\cC(p),\\[4pt]
&\ge \langle a,m\rangle_{\fM(\ov{\Omega}),C(\overline{\Omega})}+\cL_q(m,w)+\cL_q^*(A^*u-p-\l-a,B^*u)+\l\int_\Omega m\,\dd x+\s_\cC(p),\\[4pt]
&\ge \langle a,m\rangle_{\fM(\ov{\Omega}),C(\overline{\Omega})} + \langle A^*u-p-\l-a, m\rangle_{\fM(\ov{\Omega}),C(\overline{\Omega})} + \int_{\Omega} B^*u \cdot w\, \dd x+\l\int_\Omega m\,\dd x+\s_\cC(p),\\[4pt]
&\ge \langle A^*u-p, m\rangle_{\fM(\ov{\Omega}),C(\overline{\Omega})} + \int_{\Omega} B^*u \cdot w\, \dd x+\langle p,m\rangle_{\fM(\ov{\Omega}),C(\overline{\Omega})},\\[4pt]
& = \langle A^*u,m \rangle + \langle B^*u,w \rangle_{q',q},\\[4pt]
& =  \left\llangle Am,u \right \rrangle +  \left\llangle Bw,u \right \rrangle=0.
\end{align*}

This means that all the inequalities in the previous list are actually equalities. Thus, 
\begin{itemize}
\item[(i)] $\cF(m)+\cF^*(a)=\langle a, m\rangle$ and so,  using the fact that $\cF$ is differentiable on $W^{1,q}(\Omega)$, we have $a=f(\cdot,m)$.  \vspace{0.2cm}
\item[(ii)] $\cL_q(m,w)+\cL_q^*(A^*u-p-\l-a,B^*u)=\langle A^*u-p-\l-a, m\rangle  + \langle B^*u,w \rangle_{q',q},$
namely 
$$\left(A^*u-p-\l-a,B^*u \right) \in\partial \cL_q(m,w)$$
\item[(iii)] $\ds \s_\cC(p) =\langle p,m\rangle_{\fM(\ov\Omega),C(\overline{\Omega})},$
which implies that  $p\in N_\cC(m)$. 
\end{itemize} \vspace{0.3cm}

Using \eqref{cononormalenG}, \eqref{condicionprimaligualdual} and {\rm(i)}-{\rm(iii)} we recover system  \eqref{sis1}.

%
%
%
%
%

\section{Treating less regular cases via an approximation argument}\label{aproximationargument}
In this section we  provide   the proof of  the existence of a solution of a suitable form of \eqref{MFGq} when $1<q\le d$.  Note that given $w\in L^{q}(\Omega)^d$ the solution $m$ of \eqref{principaleq} is in general discontinuous. Because of the constraint $0\leq m \leq 1$, this implies that  problem \eqref{Pq} is in general not qualified (see \cite[Chapter 2]{BonSha}) and thus the arguments in the previous section are no longer valid. 
%
%
%
To handle this issue, we propose an approach which is based on a regularization argument.

Let us fix  $1<q\le d$ and $r>d$. For $\e>0$   define $\cJ_{q,\e}: W^{1,r}(\Om)\times L^r(\Omega)^d\to\overline{\R}$ as
$$\cJ_{q,\e}(m,w):=\cJ_q(m,w)+\e\cL_r(m,w).$$
Following the arguments in the proof of Theorem \ref{existence},   problem 
\begin{equation}
\inf_{(m,w)\in\cK_P}\cJ_{q,\e}(m,w)\tag{$P_{q,\e}$}\label{Pqe}
\end{equation}
admits at least one solution $(m_\e,w_\e)$. Since $m_\e\in C(\ov{\Omega})$, problem  \eqref{Pqe} is qualified. Moreover, since  both $\cL_q$ and $\cL_r$ are continuous at $(\hat{m}, \hat{w}):=(1/|\Om|,0)$, by \cite[Chapter 1, Proposition 5.6]{ekeland-temam} we have that 
$$\partial(\cL_q(m,w)+\e\cL_r(m,w))=\partial\cL_q(m,w)+\e\partial\cL_r(m,w) \hspace{0.2cm} \mbox{for all $(m,w)\in W^{1,r}(\Om)\times L^r(\Om)^d.$}$$ 
Therefore, exactly as in the proof of Theorem \ref{mfg1}, if we define  $v_\e:=(w_\e/m_\e)\one_{E_1^{m_\e}}$ we have that  $v_\e \in L^r(\Om)^d$ and there exist $(u_\e,p_\e,\l_\e)\in W^{1,s}_{\diamond}(\Omega)\times \fM_+(\ov{\Omega}) \times \R$ ($s\in]1,d/(d-1)[$),  $(\a_{\e,q},\b_{\e,q})\in\partial\cL_q(m_\e,w_\e)$ and $(\a_{\e,r},\b_{\e,r})\in\partial\cL_r(m_\e,w_\e)$ such that 
\begin{equation}\label{opt_eps1}
\left\{
\begin{array}{rcl}
\a_{\e,q} + \e\a_{\e,r} - A^* u_\e + f(x,m_\e) + p_\e + \l_\e & =  & 0,\\[4pt]
\b_{\e,q}+\e\b_{\e,r} & = & B^* u_\e,\\[4pt]
A m_\e + B w_\e & = & 0,\\[4pt]
\ds\int_\Om m_\e\,\dd x = 1,& 0\le m_\e\le 1, & {\rm{spt}}(p_\e)\subseteq \{m_\e=1\}.
\end{array}
\right.
\end{equation}
%
%
%
Now, for $\e \geq 0$, let us define
$F_{q,\e},G_{q,\e},H_{q,\e}:\R^d\to\R$ as
$$F_{q,\e}(z):=\frac{1}{q}|z|^q+\frac{\e}{r}|z|^r,\;\;\; G_{q,\e}(z):=\frac{1}{q'}|z|^q+\frac{\e}{r'}|z|^r,\;\;\;{\rm{and}}\;\;\; H_{q,\e}(z):=G_{q,\e}(\nabla F_{q,\e}^*(z)).$$
For notational convenience, we set   $H_{q}:= H_{q,0}$. Elementary arguments in convex analysis show that $H_{q,\e} \to H_{q}$ uniformly over compact sets. System \eqref{opt_eps1} can be written in the following alternative form:
%

\begin{proposition}
There exists $\tilde\a_\e\in\fM_-(\ov\Om)$ such that 
\begin{equation}
\left\{
\begin{array}{rcl}
\vspace{5pt}
-\Delta u_\e + H_{q,\e}(-\nabla u_\e) - p_\e - \tilde\a_\e - \l_\e&=&f(x,m_\e), \;\; \iin\ \Om,\\
\vspace{5pt}
-\Delta m_\e + \diver\left(m_\e\nabla F_{q,\e}^*(-\nabla u_\e)\right) &=& 0, \;\; \iin\ \Omega, \\[4pt]
\nabla m_\e\cdot n = 0 &\;& \nabla u_\e\cdot n =0, \;\; \oon\ \partial\Omega.\\[4pt]
 0\le m_\e\le 1, & \; &\ \ds\int_\Om m_\e\,\dd x=1,\\[4pt]
\mbox{{\rm spt}}(p_\e) \subseteq \{m_\e=1\},   &  \; &{\rm{spt}}(\tilde\a_\e)\subseteq\{m_\e=0\}.
\end{array}
\right.\tag{$MFG_{q,\e}$}\label{MFGqe}
\end{equation}
\end{proposition}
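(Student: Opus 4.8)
The plan is to show that the optimality system \eqref{opt_eps1} can be rewritten in the form \eqref{MFGqe} by identifying the subdifferential terms explicitly and absorbing the singular (concentration) parts into a single non-positive measure $\tilde\a_\e$. First I would apply Theorem \ref{subdifferentialcomputation} to both subdifferential inclusions. Since $\cJ_{q,\e}(m_\e,w_\e)<\infty$ and $(m_\e,w_\e)$ is optimal, we have $v_\e:=(w_\e/m_\e)\one_{E_1^{m_\e}}\in L^r(\Om)^d$ (recall $r>d$, and $q\le d<r$ so $v_\e\in L^r\subseteq L^q$). Theorem \ref{subdifferentialcomputation} then gives, on $E_1^{m_\e}$,
\begin{equation*}
\b_{\e,q}=|v_\e|^{q-2}v_\e,\quad \a_{\e,q}=-\tfrac{1}{q'}|v_\e|^q,\quad \b_{\e,r}=|v_\e|^{r-2}v_\e,\quad \a_{\e,r}=-\tfrac{1}{r'}|v_\e|^r,
\end{equation*}
while the singular parts of $\a_{\e,q}$ and $\a_{\e,r}$ are concentrated on $E_0^{m_\e}=\{m_\e=0\}$.

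Next I would translate the relation for $\b$'s into the flux term of the Fokker--Planck equation. From the second line of \eqref{opt_eps1}, $B^*u_\e=-\nabla u_\e=\b_{\e,q}+\e\b_{\e,r}=|v_\e|^{q-2}v_\e+\e|v_\e|^{r-2}v_\e=\nabla F_{q,\e}(v_\e)$ on $E_1^{m_\e}$, using that $\nabla F_{q,\e}(z)=|z|^{q-2}z+\e|z|^{r-2}z$. Since $F_{q,\e}$ is strictly convex with $\nabla F_{q,\e}$ a homeomorphism, this inverts to $v_\e=\nabla F_{q,\e}^*(-\nabla u_\e)$, so the flux $w_\e=m_\e v_\e=m_\e\nabla F_{q,\e}^*(-\nabla u_\e)$. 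Substituting into $Am_\e+Bw_\e=0$ (the weak form \eqref{principalecweakform}) yields exactly the second equation of \eqref{MFGqe} together with the Neumann condition $\nabla m_\e\cdot n=0$. The Neumann condition $\nabla u_\e\cdot n=0$ comes from $u_\e\in W^{1,s}_\diamond(\Om)$ being the adjoint state associated with the boundary term, as in Corollary \ref{mfgregularcase}.

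For the HJB equation I would combine the first line of \eqref{opt_eps1} with the pointwise identities above on $E_1^{m_\e}$. There the combination $\a_{\e,q}+\e\a_{\e,r}=-\tfrac1{q'}|v_\e|^q-\tfrac{\e}{r'}|v_\e|^r=-G_{q,\e}(v_\e)=-H_{q,\e}(-\nabla u_\e)$, where I use $G_{q,\e}(v_\e)=G_{q,\e}(\nabla F_{q,\e}^*(-\nabla u_\e))=H_{q,\e}(-\nabla u_\e)$ by the very definition of $H_{q,\e}$. Recalling $A^*u_\e$ is identified with $-\Delta u_\e$ (weakly, with Neumann data), the first line of \eqref{opt_eps1} becomes, on $E_1^{m_\e}$,
\begin{equation*}
-\Delta u_\e+H_{q,\e}(-\nabla u_\e)-p_\e-\l_\e=f(x,m_\e).
\end{equation*}
I then define $\tilde\a_\e$ to be the measure that collects the leftover singular contribution: precisely, set $\tilde\a_\e:=\a_{\e,q}+\e\a_{\e,r}+H_{q,\e}(-\nabla u_\e)$ as measures, so that the first line of \eqref{opt_eps1} reads exactly as the HJB equation of \eqref{MFGqe}. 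By the two previous computations $\tilde\a_\e$ vanishes on $E_1^{m_\e}$, hence $\mathrm{spt}(\tilde\a_\e)\subseteq\{m_\e=0\}$; and $\tilde\a_\e\in\fM_-(\ov\Om)$ because both $\a_{\e,q}+\tfrac1{q'}|\b_{\e,q}|^{q'}\le0$ and $\a_{\e,r}+\tfrac1{r'}|\b_{\e,r}|^{r'}\le0$ in the sense of \eqref{expressionovA}, and on $E_1^{m_\e}$ the absolutely continuous parts are already balanced, leaving only a non-positive singular remainder. The remaining constraints ($0\le m_\e\le1$, mass one, $\mathrm{spt}(p_\e)\subseteq\{m_\e=1\}$) carry over verbatim from \eqref{opt_eps1}.

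The main obstacle I anticipate is the bookkeeping of the singular parts: verifying that $\tilde\a_\e$ is globally non-positive as a measure (not merely on $E_1^{m_\e}$) and correctly supported on $\{m_\e=0\}$. The delicate point is that $\a_{\e,q}$ and $\a_{\e,r}$ are elements of $\ov{\cA_{q'}}$ and $\ov{\cA_{r'}}$ respectively, so their singular parts are non-positive by definition, but one must check that adding the bounded continuous function $H_{q,\e}(-\nabla u_\e)$ (which equals $-(\a_{\e,q}^{\mathrm{ac}}+\e\a_{\e,r}^{\mathrm{ac}})$ on $E_1^{m_\e}$) does not spoil non-positivity, and that the definition is consistent with the concentration property $\mathrm{spt}(\tilde\a_\e)\subseteq\{m_\e=0\}$ guaranteed by Theorem \ref{subdifferentialcomputation}. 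This is essentially a careful decomposition argument using \eqref{expressionovA} and the fact that $m_\e$ is continuous so that $\{m_\e=0\}$ is closed.
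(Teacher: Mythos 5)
Your overall strategy is the same as the paper's: apply Theorem \ref{subdifferentialcomputation} to both subdifferential inclusions, invert $\nabla F_{q,\e}$ to identify the drift $v_\e=\nabla F^*_{q,\e}(-\nabla u_\e)$ on $E_1^{m_\e}$, and define $\tilde\a_\e$ as the leftover term $\a_{\e,q}+\e\a_{\e,r}+H_{q,\e}(-\nabla u_\e)$ in the first equation of \eqref{opt_eps1}. However, there is a genuine gap at precisely the point you flag as ``the main obstacle'': you never prove that $\tilde\a_\e$ is non-positive on $\{m_\e=0\}$, and the reasons you sketch do not suffice. The singular part of $\tilde\a_\e$ is indeed $\a_{\e,q}^{s}+\e\a_{\e,r}^{s}\le 0$, but the absolutely continuous part on $E_0^{m_\e}$ equals $\a_{\e,q}^{ac}+\e\a_{\e,r}^{ac}+H_{q,\e}(-\nabla u_\e)$, and the constraints $(\a_{\e,q},\b_{\e,q})\in\ov{\cA_{q'}}$, $(\a_{\e,r},\b_{\e,r})\in\ov{\cA_{r'}}$ only give $\a_{\e,q}^{ac}+\e\a_{\e,r}^{ac}\le -\frac{1}{q'}|\b_{\e,q}|^{q'}-\frac{\e}{r'}|\b_{\e,r}|^{r'}$. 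Theorem \ref{subdifferentialcomputation} characterizes $\b_{\e,q}$ and $\b_{\e,r}$ only on $E_1^{m_\e}$; on $E_0^{m_\e}$ only their combination $\b_{\e,q}+\e\b_{\e,r}=-\nabla u_\e$ is prescribed, so a priori nothing prevents $\frac{1}{q'}|\b_{\e,q}|^{q'}+\frac{\e}{r'}|\b_{\e,r}|^{r'}<H_{q,\e}(-\nabla u_\e)$ on a set of positive measure there, which would make $\tilde\a_\e^{ac}$ positive. (Also, $H_{q,\e}(-\nabla u_\e)$ is not a bounded continuous function: $\nabla u_\e$ is merely in $L^{r'}(\Om)^d$, so this term is only integrable.)

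The paper closes exactly this gap by a decomposition and convexity argument. Writing $\ov v_\e:=\nabla F^*_{q,\e}(-\nabla u_\e)$, the identity $\b_{\e,q}+\e\b_{\e,r}=\nabla F_{q,\e}(\ov v_\e)$ a.e. in $\Om$ forces $\b_{\e,q}=|\ov v_\e|^{q-2}\ov v_\e+\xi_\e$ and $\b_{\e,r}=|\ov v_\e|^{r-2}\ov v_\e-\xi_\e/\e$ for some $\xi_\e$ supported in $E_0^{m_\e}$. Convexity of $\frac{1}{q'}|\cdot|^{q'}$ and $\frac{1}{r'}|\cdot|^{r'}$ at the points $|\ov v_\e|^{q-2}\ov v_\e$ and $|\ov v_\e|^{r-2}\ov v_\e$ (whose subgradients are both $\ov v_\e$) gives
\begin{equation*}
\frac{1}{q'}|\b_{\e,q}|^{q'}\ge \frac{1}{q'}|\ov v_\e|^{q}+\ov v_\e\cdot\xi_\e,\qquad \frac{\e}{r'}|\b_{\e,r}|^{r'}\ge \frac{\e}{r'}|\ov v_\e|^{r}-\ov v_\e\cdot\xi_\e,
\end{equation*}
and the cross terms cancel upon summation, yielding $\frac{1}{q'}|\b_{\e,q}|^{q'}+\frac{\e}{r'}|\b_{\e,r}|^{r'}\ge H_{q,\e}(-\nabla u_\e)$ a.e., with equality on $E_1^{m_\e}$. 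This is the inequality your argument is missing; with it, $\tilde\a_\e\le 0$ and $\spt(\tilde\a_\e)\subseteq E_0^{m_\e}$ follow as you describe. You should supply this step (or an equivalent one) before the proof can be considered complete.
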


\begin{proof}
By  Theorem \ref{subdifferentialcomputation} we have that 
$$\a_{\e,q}\mres E_1^{m_\e}=-\frac{1}{q'}|v_\e|^q,\;\;\; \a_{\e,r}\mres E_1^{m_\e}=-\frac{1}{r'}|v_\e|^r, \; \; \b_{\e,q}\mres E_1^{m_\e}=|v_\e|^{q-2}v_\e,\;\;\; \b_{\e,r}\mres E_1^{m_\e}=|v_\e|^{r-2}v_\e.$$
On the other hand, since $\nabla u_\e \in L^{r'}(\Omega)^{d}$ we have that $ \ov v_\e:=\nabla F_{q,\e}^*(-\nabla u_\e)\in L^r(\Om)^d$.  Using that $\nabla F_{q,\e}(v_\e)= \b_{\e,q}+\e\b_{\e,r}=-\nabla u_\e$ in $ E_1^{m_\e}$ and that $\nabla F_{q,\e}^{-1}= \nabla F_{q,\e}^{*}$, we get that $\ov v_\e=  v_\e$ in $ E_1^{m_\e}$. Therefore,  
%
there exists $\xi_\e\in L^{q'}(\Om)^d$ such that $\spt(\xi_\e)\subseteq E_0^{m_\e}$ and a.e. in $\Om$
$$\b_{\e,q}=|\ov v_\e|^{q-2}\ov v_\e+\xi_\e\;\;\;{\rm{and}}\;\;\; \b_{\e,r}=\frac{1}{\e}( \nabla F_{q,\e}(\ov v_\e)-\b_{\e,q})= |\ov v_\e|^{r-2}\ov v_\e-(1/\e)\xi_\e.$$
Using the convexity of $\frac{1}{q'}|\cdot|^{q'}$ and  $\frac{1}{r'}|\cdot|^{r'}$,  we easily check  that 
$$\frac{1}{q'}|\b_{\e,q}|^{q'}\ge \frac{1}{q'}|\ov v_\e|^q + \ov v_\e\cdot\xi_\e \hspace{0.2cm} \mbox{and }  \hspace{0.2cm} \frac{\e}{r'}|\b_{\e,r}|^{r'}\ge \frac{\e}{r'}|\ov v_\e|^r - \ov v_\e\cdot\xi_\e.$$
Hence 
$$-\frac{1}{q'}|\b_{\e,q}|^{q'}-\frac{\e}{r'}|\b_{\e,r}|^{r'}\le - \frac{1}{q'}|\ov v_\e|^q- \frac{\e}{r'}|\ov v_\e|^r=-H_{q,\e}(-\nabla u_\e),$$
with an equality a.e. in $E_1^{m_\e}.$ In particular, we have the existence of a positive measure $\g_\e$ such that $\spt(\g_\e)\subseteq \spt(\xi_\e)\subseteq E_0^{m_\e}$ and
$$-\frac{1}{q'}|\b_{\e,q}|^{q'}-\frac{\e}{r'}|\b_{\e,r}|^{r'}= -H_{q,\e}(-\nabla u_\e)-\g_\e.$$
Since the definition of $(\a_{\e,q},\b_{\e,q})$ and $(\a_{\e,r},\b_{\e,r})$ implies the existence of  two  positive measures $\tilde\a_{\e,q}$ and $\tilde\a_{\e,r}$ such that $\spt(\tilde\a_{\e,q})\subseteq E_0^{m_\e},$ $\spt(\tilde\a_{\e,r})\subseteq E_0^{m_\e}$ and 
$$\a_{\e,q}=-\frac{1}{q'}|\b_{\e,q}|^{q'}-\tilde\a_{\e,q}\;\;\; {\rm{and}}\;\;\; \a_{\e,r}=-\frac{1}{r'}|\b_{\e,r}|^{r'}-\tilde\a_{\e,r},$$
the result follows by setting   $\tilde\a_\e:=-\tilde\a_{\e,q}-\e\tilde\a_{\e,r}-\g_\e$.
\end{proof}

Now we present the main theorem of this section.

\begin{theorem}\label{main_appr}
There exists $(m,u,p,\mu,\l)\in  W^{1,q}(\Om)\times W^{1,q'}_\diamond(\Om)\times\fM_+(\ov\Om)\times\fM_+(\ov\Om)\times\R$  such that 
\begin{equation}
\left\{
\begin{array}{rcl}
-\Delta u + \frac{1}{q'}|\nabla u|^{q'} +\mu - p - \l&=&f(x,m) \hspace{0.4cm} \iin\; \; \Om,\\[6pt]
\nabla u\cdot n&=&0 \hspace{0.4cm} \oon  \; \; \partial\Omega,\\[4pt]
-\Delta m - \diver\left(m |\nabla u|^\frac{2-q}{q-1}\nabla u\right) &=& 0 \hspace{0.4cm}\iin \; \; \Omega, \\[5pt]
\nabla m\cdot n&=&0  \hspace{0.4cm}  \oon  \; \; \partial\Omega,\\[4pt]
\ds \int_\Omega m\,\dd x=1, & \; &\ \ 0\le m\le 1 , \; \;  \iin\ \Omega,\\[6pt]
\end{array}\right.\tag{$MFG_q$}\label{MFGq5}
\end{equation}
where the coupled system for $(u,m)$ is satisfied in the  weak sense {\rm(}see \eqref{interpretacionsistema}{\rm)}. Moreover, defining 
\begin{equation}\label{definiciondualidad}\langle \mu-p,m\rangle:=\lambda+ \int_{\Om} \left[f(x,m)-\frac{1}{q'}|\nabla u|^{q'}\right] m\, \dd x-\int_{\Om} \nabla m \cdot \nabla u \, \dd x\end{equation}
we have the inequality 
\begin{equation}\label{weakconcentrationproperty}
 \int_\Om\dd p+\langle \mu-p,m\rangle\leq 0.
\end{equation}
%
\end{theorem}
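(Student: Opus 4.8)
The plan is to let $\e\downarrow0$ in the family $(MFG_{q,\e})$ constructed above and to read off \eqref{weakconcentrationproperty} from an exact energy identity valid at each fixed $\e>0$.

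First I would collect the uniform estimates. Comparing $(m_\e,w_\e)$ with the admissible competitor $(1/|\Om|,0)$ gives $\cJ_{q,\e}(m_\e,w_\e)\le\cF(1/|\Om|)$, so that $\cL_q(m_\e,w_\e)$, $\e\cL_r(m_\e,w_\e)$ and $\cF(m_\e)$ are bounded uniformly in $\e$; with $0\le m_\e\le1$ this bounds $\|w_\e\|_{L^q}$, and then Lemma~\ref{regularity} bounds $\|m_\e\|_{W^{1,q}}$. Up to a subsequence, $m_\e\weakly m$ in $W^{1,q}(\Om)$ (hence $m_\e\to m$ strongly in $L^p$ for $p<q^\ast$ and a.e., with $0\le m\le1$ and $\int_\Om m\,\dd x=1$) and $w_\e\weakly w$ in $L^q(\Om)^d$, so $Am+Bw=0$. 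Writing $\zeta_\e:=\nabla F_{q,\e}^*(-\nabla u_\e)$, one has $w_\e=m_\e\zeta_\e$ and $\cL_q(m_\e,w_\e)=\tfrac1q\int_\Om m_\e|\zeta_\e|^q\,\dd x$. The remaining, more delicate bounds on $u_\e$, $p_\e$, $\l_\e$ and $|\tilde\a_\e|_{TV}$ come from the gradient estimates for elliptic equations with measure data (\cite{mingione2} and the Appendix) applied to the first line of \eqref{MFGqe}, whose right-hand side is a bounded family in $\fM(\ov\Om)$: these furnish a uniform $W^{1,s}$ bound on $u_\e$ for every $s<d/(d-1)$ and, combined with the control of $\int_\Om m_\e|\zeta_\e|^q\,\dd x$, the higher integrability placing the limit in $\Wdqpom$. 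Thus $p_\e\weaks p$, $-\tilde\a_\e\weaks\mu$ in $\fM_+(\ov\Om)$, $\l_\e\to\l$, and $u_\e\weakly u$.

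Next I would pass to the limit in the weak formulation \eqref{interpretacionsistema}. The linear terms and $f(\cdot,m_\e)\to f(\cdot,m)$ (dominated convergence, using $0\le m_\e\le1$ and $m_\e\to m$ a.e.) are routine; the real content, and the main obstacle, is to identify the limits of the nonlinearities $H_{q,\e}(-\nabla u_\e)$ and $m_\e\nabla F_{q,\e}^*(-\nabla u_\e)$ with $\tfrac1{q'}|\nabla u|^{q'}$ and $m|\nabla u|^{\frac{2-q}{q-1}}\nabla u$. This requires upgrading the weak convergence of $\nabla u_\e$ to almost everywhere convergence, for which I would run a monotonicity (Minty-type) argument on the $u$-equation together with the measure-data gradient estimates. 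Granting this, the limit $(m,u,\mu,p,\l)$ solves \eqref{MFGq5} in the sense of \eqref{interpretacionsistema}, with $u\in\Wdqpom$ and $w=-m|\nabla u|^{\frac{2-q}{q-1}}\nabla u$; in particular $\cL_q(m,w)=\tfrac1q\int_\Om m|\nabla u|^{q'}\,\dd x$.

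Finally I would establish \eqref{weakconcentrationproperty}. At fixed $\e$ the functions $m_\e\in C(\ov\Om)$ and $u_\e\in W^{1,r'}(\Om)$ are admissible test functions, so I test the first line of \eqref{MFGqe} against $m_\e$ and the second against $u_\e$. Using $\spt(p_\e)\subseteq\{m_\e=1\}$, $\spt(\tilde\a_\e)\subseteq\{m_\e=0\}$, the Fenchel identity $\zeta_\e\cdot\nabla u_\e=-(|\zeta_\e|^q+\e|\zeta_\e|^r)$ and $H_{q,\e}(-\nabla u_\e)=\tfrac1{q'}|\zeta_\e|^q+\tfrac{\e}{r'}|\zeta_\e|^r$, the two identities combine into
\begin{equation*}
p_\e(\ov\Om)+\l_\e+\int_\Om f(x,m_\e)m_\e\,\dd x=-\int_\Om m_\e F_{q,\e}(\zeta_\e)\,\dd x\le-\cL_q(m_\e,w_\e).
\end{equation*}
The left-hand side converges to $p(\ov\Om)+\l+\int_\Om f(x,m)m\,\dd x$ (testing $p_\e\weaks p$ against $1$, together with $\l_\e\to\l$ and the strong convergence of $m_\e$), while the weak lower semicontinuity of $\cL_q$ (see the Remark following Lemma~\ref{subdiff_cost} and \cite[Chapter 5]{filippobook15}) gives $\liminf_\e\cL_q(m_\e,w_\e)\ge\cL_q(m,w)$. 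Passing to the $\limsup$ yields
\begin{equation*}
p(\ov\Om)+\l+\int_\Om f(x,m)m\,\dd x\le-\cL_q(m,w)=-\tfrac1q\int_\Om m|\nabla u|^{q'}\,\dd x.
\end{equation*}
On the other hand, since $u\in\Wdqpom$, I may extend the limiting Fokker--Planck identity $\int_\Om(\nabla m-w)\cdot\nabla\varphi\,\dd x=0$ from $\varphi\in C^1(\ov\Om)$ to $\varphi=u$ by density, which gives $\int_\Om\nabla m\cdot\nabla u\,\dd x=\int_\Om w\cdot\nabla u\,\dd x=-\int_\Om m|\nabla u|^{q'}\,\dd x$. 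Recalling the definition \eqref{definiciondualidad}, this identity turns the previous display into $p(\ov\Om)+\langle\mu-p,m\rangle\le0$, which is exactly \eqref{weakconcentrationproperty}.
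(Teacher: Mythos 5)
Your overall strategy is the paper's: pass to the limit $\e\downarrow 0$ in \eqref{MFGqe} and obtain \eqref{weakconcentrationproperty} from the energy identity gotten by testing the HJB equation with $m_\e$ and the Fokker--Planck equation with $u_\e$; that final computation is correct and coincides with the paper's Step 4. However there are two genuine gaps earlier. First, your a priori bounds are circular: you propose to bound $p_\e$, $\tilde\a_\e$, $\l_\e$ and $u_\e$ by applying the measure-data gradient estimates to the first line of \eqref{MFGqe}, ``whose right-hand side is a bounded family in $\fM(\ov\Om)$'' --- but $p_\e$, $\tilde\a_\e$, $\l_\e$ and $H_{q,\e}(-\nabla u_\e)$ \emph{are} that right-hand side, so their boundedness is precisely what must be proved and cannot be extracted from the equation they sit in. The paper's device is to test the HJB equation with \emph{both} $m_\e$ and $1-m_\e$: the first test (which you do perform, but only at the very end) gives $\l_\e+|p_\e|_{TV}$ bounded; the second gives $(|\Om|-1)\l_\e-|\tilde\a_\e|_{TV}\ge -C$ together with control of $\int_\Om H_{q,\e}(-\nabla u_\e)\,\dd x$, and only the combination of the two, using $|\Om|>1$ and the sign/support conditions, separates $\l_\e$ from $|p_\e|_{TV}$ and $|\tilde\a_\e|_{TV}$. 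Note also that the $L^1(\Om)$ bound on $H_{q,\e}(-\nabla u_\e)\ge\frac1{q'}|\nabla u_\e|^{q'}$ coming from that second test is what puts $u_\e$ in $W^{1,q'}(\Om)$; the Mingione estimates alone only give $W^{1,s}$ for $s<d/(d-1)$, which is weaker than $W^{1,q'}$ precisely in the regime $q\le d$ treated here, and the quantity $\int_\Om m_\e|\zeta_\e|^q\,\dd x$ you invoke says nothing about $\nabla u_\e$ on $\{m_\e=0\}$.

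Second, even granting a.e. convergence of $\nabla u_\e$, you identify the distributional limit of $H_{q,\e}(-\nabla u_\e)$ with $\frac1{q'}|\nabla u|^{q'}$. An a.e.\ convergent sequence that is merely bounded in $L^1$ need not converge weakly-$*$ to its pointwise limit: Fatou only yields that the weak-$*$ limit $\g\in\fM(\ov\Om)$ satisfies $\g\ge\frac1{q'}|\nabla u|^{q'}\dd x$, and the nonnegative defect $\rho:=\g-\frac1{q'}|\nabla u|^{q'}\dd x$ must be retained. The paper absorbs it into the multiplier by setting $\mu:=\rho-\tilde\a$, whereas you take $\mu$ to be the limit of $-\tilde\a_\e$ alone; without $\rho$ the limit HJB equation you write is not justified. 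This is also why the concentration property survives only in the weak form \eqref{weakconcentrationproperty} rather than as genuine support conditions.
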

\begin{proof} 
{\it Step 1:  Bounds for   $\l_\e,$ $p_\e$ and $\tilde\a_\e$.} Note first that the second equation in \eqref{opt_eps1} and Theorem \ref{subdifferentialcomputation} imply that $w_\e= m_\e v_\e$ a.e. in $\Omega$. Also, in the set $E_{1}^{m_\e}$ we have that $\nabla F_{q,\e}(v_\e)=-\nabla u_\e$ and so in $E_{1}^{m_\e}$ the identities $v_\e= \nabla F_{q,\e}^{*}(-\nabla u_\e)$ and $ H_{q,\e}(-\nabla u_\e)=G_{q,\e}(v_\e)$ hold true. Now, by the second and third equations in \eqref{opt_eps1} we get that 
\begin{equation}\label{nice_iden}
\ds\int_\Om\nabla u_\e\cdot\nabla m_\e\,\dd x=\int_\Om\nabla u_\e\cdot w_\e\,\dd x=-\int_\Om m_\e\nabla F_{q,\e}(v_\e)\cdot v_\e\,\dd x=-\int_\Om m_\e(|v_\e|^q+\e|v_\e|^r)\,\dd x
\end{equation}
and so taking $m_\e$ as test function in the first equation of \eqref{opt_eps1}, we obtain  
\begin{align*}
\l_\e+\int_\Omega m_\e\,\dd p_\e & =\int_\Omega\left( G_{q,\e}(v_\e)m_\e+\nabla u_\e\cdot\nabla m_\e-f(x,m_\e)m_\e\right)\,\dd x\\
& =\int_{\Omega}\left(-\frac{1}{q}|v_\e|^qm_\e-\e\frac{1}{r}|v_\e|^rm_\e-f(x,m_\e)m_\e\right)\,\dd x,
\end{align*}
which implies that 
\begin{equation}\label{identidad1}
\l_\e+\int_\Omega m_\e\,\dd p_\e=-\cL_q(m_\e,w_\e) - \e\cL_r(m_\e,w_\e) -\int_{\Omega}f(x,m_\e)m_\e\,\dd x.
\end{equation}
The optimality of $(m_\e,w_\e)$ yields
\begin{equation}\label{uniformboundsdifferencelq}0\leq \cL_q(m_\e,w_\e) + \e\cL_r(m_\e,w_\e) \leq \cJ_{q,\e}(1/|\Om|,0)- \cF(m_\e).\end{equation}
Thus, since $f$ is continuous, $0\leq m_\e \leq 1$,   \eqref{identidad1} and the fact that $\mbox{{\rm spt}}(p_\e) \subseteq \{m_\e=1\}$ yield the existence of a constant $c_1>0$ (independent of $\e$) such that 
\begin{equation}\label{bound1}-c_1 \leq \l_\e+\int_\Omega m_\e\,\dd p_\e = \l_\e+ |p_\e|_{TV}\leq c_1.\end{equation}
On the other hand, by taking $1-m_\e$ as test function in the first equation of \eqref{opt_eps1}, a similar computation using  \eqref{nice_iden} yields 
\begin{equation}\label{alltheterms}(|\Omega|-1)\l_\e -  |\tilde\a_\e|_{TV}=\int_\Omega H_{q,\e}(-\nabla u_\e)\,\dd x+\cL_q(m_\e,w_\e) + \e\cL_r(m_\e,w_\e)-\int_\Omega f(x,m_\e)(1-m_\e)\,\dd x,\end{equation}
from which 
\begin{equation}\label{bound2}
 (|\Omega|-1)\l_\e - |\tilde\a_\e|_{TV} \geq -\int_\Omega f(x,m_\e)(1-m_\e)\,\dd x \geq c_2,\end{equation}
where $c_2>0$ is independent of $\e$. Since $|\Omega|>1$, inequalities \eqref{bound1}-\eqref{bound2} imply that $\lambda_\e$ is uniformly bounded w.r.t. $\e$ and so $p_\e$ and  $\tilde\a_\e$ are uniformly bounded w.r.t. $\e$ in $\fM(\ov\Om)$.\smallskip\\
{\it Step 2: Convergence of  $\nabla u_\e$ and  $m_\e$.} By \eqref{alltheterms}, as a function of $\e$ we have  that $H_{q,\e}(-\nabla u_\e)$ is uniformly bounded  in $L^1(\Omega)$ which implies that $u_\e$ is bounded in $W^{1,q'}(\Omega)$ and that  $-\Delta u_\e$
 is  bounded  in $\fM(\overline{\Omega})$. On the one hand, the boundedness of $u_\e$  in $W^{1,q'}(\Omega)$ implies the existence of $u\in W^{1,q'}(\Omega)$ such that up to some subsequence $u_\e$ converges weakly to $u$ in $W^{1,q'}(\Omega)$. In particular, $\ds\int_\Om u\, \dd x=0$. On the other hand, the boundedness of $-\Delta u_\e$ in  $\fM(\overline{\Omega})$ and \cite[Theorem  1.3 with $p=2$]{mingione2} 
 imply  the existence of $ s\in]0,1[$ and $\d_0>0$ such that $\nabla u_\e$ is uniformly bounded in $W^{s,1+\d_0}_{\rm{loc}}(\Om)^d$. By  \cite[Corollary 7.2]{frac_sob}  we can extract a subsequence such that $\nabla u_\e\to\nabla u$ a.e. in $\Om$   and so $H_{q,\e}(-\nabla u_\e)\to \frac{1}{q'}|\nabla u|^{q'}$ a.e. in $\Omega$.  

Now, in order to establish the convergence for $m_\e$, note that inequality \eqref{uniformboundsdifferencelq} and the fact that $0\leq m_\e \leq 1$ imply that 
$w_\e$ is uniformly bounded in $L^q(\Omega)^d$ for all $\e>0.$ This means that, up to some subsequence, $w_\e$ is converging weakly in $L^q(\Om)^d.$  Since Lemma \ref{regularity} implies that  $\|\nabla m_\e\|_{L^q}\le C\|w_\e\|_{L^q}$  (for a constant $C>0$ independent of $\e$), by Poincar\'e's inequality we get that $m_\e$ is uniformly bounded in $W^{1,q}(\Om)$. Extracting a subsequence again, there exists $m$ such that $m_\e$ converges weakly to $m$ in $W^{1,q}(\Om)$. By the compact Sobolev embedding, we get strong convergence in $L^q(\Omega)$, which implies that
 $0\le m \le 1$ a.e. in $\Om$ and $\ds\int_\Omega m \; \dd x=1$.\smallskip\\
{\it Step 3: The limit equations.} 
The weak formulation of the second equation in \eqref{MFGqe}  yields
$$\int_\Om\nabla m_\e\cdot\nabla\varphi\,\dd x=-\int_\Om m_\e\nabla F^*_{q,\e}(-\nabla u_\e)\cdot\nabla\varphi\,\dd x, \hspace{0.3cm} \mbox{for all $\varphi\in C^\infty(\ov\Om).$}$$
Since, extracting a subsequence,  $w_\e=m_\e\nabla F^*_\e(-\nabla u_\e)$  converges weakly in $L^q(\Om)^d$ to some $w$,  the weak convergence of $m_\e$ in $W^{1,q}(\Om)$ implies that 
$$\int_\Om\nabla m \cdot\nabla\varphi\,\dd x=\int_\Om w\cdot\nabla\varphi\,\dd x, \hspace{0.3cm} \mbox{for all $\varphi\in C^\infty(\ov\Om).$}$$
Moreover, extracting a subsequence again,  we get  that 
 $$m_\e(x)\nabla F^*_{q,\e}(-\nabla u_\e(x))\to -m(x)|\nabla u(x)|^{\frac{2-q}{q-1}}\nabla u(x) \hspace{0.4cm} \mbox{for almost every $x\in \Om$}.$$ 
The latter equality and  Egorov's theorem imply that $w= -m|\nabla u|^{\frac{2-q}{q-1}}\nabla u$ from which the second equation in  \eqref{MFGq5} follows. 


On the other hand,  the weak formulation of the first equation in \eqref{MFGqe} reads 
\begin{equation}\label{limit_HJ}
\int_\Om\nabla u_\e\cdot\nabla\varphi\,\dd x +\int_{\Om}H_{q,\e}(-\nabla u_\e)\varphi\,\dd x-\l_\e\int_\Om\varphi\,\dd x -\int_\Om\varphi\,\dd(p_\e+\tilde\a_\e)=\int_\Om f(x,m_\e(x))\varphi(x)\,\dd x,
\end{equation}
for any test function $\varphi\in C^1(\ov\Om)$.  The continuity of $f$ and the dominated convergence theorem imply that 
$$\lim_{\e\to 0}\int_\Om f(x,m_\e(x))\varphi(x)\,\dd x=\int_{\Om}f(x,m(x))\varphi(x)\,\dd x \; \hspace{0.3cm} \mbox{for all $\varphi\in C(\ov\Om)$.}$$
The previous steps imply that we only need to study the limit behavior of the second term in \eqref{limit_HJ}. Since 
$H_{q,\e}(-\nabla u_\e)$ is bounded in $L^{1}(\Om)$, there exists $\gamma \in \fM(\ov{\Om})$ such that, extracting a subsequence, for all $\ds\varphi\in C(\ov{\Om})$,  $\ds\int_{\Om}  H_{q,\e}(-\nabla u_\e)\varphi\, \dd x \to \int_{\Om} \varphi \,\dd \gamma$.  Fatou's lemma implies that 
$$\int_\Om\frac{1}{q'}|\nabla u|^{q'}\varphi\,\dd x\le\liminf_{\e\to 0}\int_\Om H_{q,\e}(-\nabla u_\e)\varphi\,\dd x=\int_\Om\varphi\, \dd\g \; \hspace{0.3cm} \forall \; \; \varphi\in  C(\ov{\Om}), \; \; \varphi \geq 0.$$ 
%
Defining,   $\rho\in\fM_+(\ov\Om)$ as $\dd \rho:=\dd \g-\frac{1}{q'}|\nabla u|^{q'}\dd x$, we obtain that 
\begin{equation}\label{compensation}
\int_\Om\varphi\,\dd\rho+\int_\Om\frac{1}{q'}|\nabla u|^{q'}\varphi\,\dd x=\lim_{\e\to 0}\int_\Om H_{q,\e}(-\nabla u_\e)\varphi\,\dd x \; \; \hspace{0.3cm} \mbox{for all $\varphi\in C(\ov\Om)$.}
\end{equation} 
Thus passing to the limit in \eqref{limit_HJ} as $\e\to 0$ we get  
$$\int_\Om\nabla u\cdot\nabla\varphi\,\dd x +\int_{\Om}\frac{1}{q'}|\nabla u|^{q'}\varphi\,\dd x-\l\int_\Om\varphi\,\dd x -\int_\Om\varphi\,\dd(p+\tilde\a-\rho)=\int_\Om f(x,m(x))\varphi(x)\,\dd x.$$
Setting, $\mu:= \rho-\tilde\a \in \fM_{+}(\ov{\Om})$ we obtain the weak form of the first equation in \eqref{MFGq5}.\smallskip\\
{\it Step 4: Proof of \eqref{weakconcentrationproperty}.}
%
By \eqref{MFGqe} and \eqref{nice_iden} we have
\begin{align*}
0&=\int_\Om(1-m_\e)\,\dd p_\e - \int_\Om m_\e\,\dd\tilde\a_\e=\int_\Om\dd p_\e +\int_\Om m_\e\,\dd(-\tilde\a_\e-p_\e),\\
&=\int_\Om\dd p_\e+\int_\Om\left[\l_\e+f(x,m_\e) - H_{q,\e}(-\nabla u_\e)\right]m_\e\,\dd x-\int_{\Omega} \nabla u_\e\cdot \nabla m_\e\, \dd x,\\
&=\int_\Om\dd p_\e+\int_\Om\left[\frac{1}{q}|v_\e|^q+\frac{\e}{r}|v_\e|^r+\l_\e+f(x,m_\e)\right]m_\e\,\dd x,\\
&\ge \int_\Om\dd p_\e+\int_\Om\left[\frac{1}{q}|v_\e|^q+\l_\e+f(x,m_\e)\right]m_\e\,\dd x.
\end{align*}
By Fatou's lemma we have
$$\int_\Om\frac1q|\nabla u|^{q'}m\,\dd x
\le\liminf_{\e\to 0}\int_\Om\frac1q|v_\e|^qm_\e\,\dd x.$$
Thus, letting $\e\to 0$ and using \eqref{definiciondualidad}, we get
\begin{align*}
0&\ge \int_\Om\dd p+\int_\Om\left[\frac{1}{q}|\nabla u|^{q'}+\l+f(x,m)\right]m\,\dd x\\
&= \int_\Om\dd p+\int_\Om \nabla u \cdot \nabla m\, \dd x +\int_\Om |\nabla u|^{q'}m\, \dd x +\langle \mu-p,m\rangle.
\end{align*}
By taking $u\in W^{1,q'}_\diamond(\Om)$ as test function in the second equation of $(MFG_{q})$ we obtain that
$$\int_\Om\left[\nabla m\cdot\nabla u+m|\nabla u|^\frac{2-q}{q-1}|\nabla u|^2\right]\,\dd x=\int_\Om\left[\nabla m\cdot\nabla u+m|\nabla u|^{q'}\right]\,\dd x=0,$$
from which \eqref{weakconcentrationproperty} follows.
%
%
%
\end{proof}
\begin{remark}
Inequality \eqref{weakconcentrationproperty} is a sort of ``weak concentration property''. In fact, by an approximation argument it is easy to see that we can take $C(\ov{\Om})\cap W^{1,q}(\Omega)$ for the space of  test functions in the first equation of \eqref{MFGq5}. Thus, if $m$ is continuous, we would have that $\ds\langle \mu-p, m\rangle= \int_\Om m \,\dd (\mu-p)$ and so  \eqref{weakconcentrationproperty} would imply that 
$$ \ds\int_\Om m\, \dd\mu=0  \hspace{0.3cm} \mbox{and } \; \;  \int_\Om (1-m)\, \dd p=0, \; \; \mbox{i.e. } \; \mbox{{\rm spt}}(\mu) \subseteq \{m=0\} \; \; \mbox{and }    \;     \mbox{{\rm spt}}(p) \subseteq \{m=1\}, $$
as in Corollary \ref{mfgregularcase}.
\end{remark}
\section*{Appendix}
In this section we recall some classical results about  the regularity of  solutions of elliptic equations with irregular r.h.s. 
%
Recall that we set $\llangle \cdot, \cdot \rrangle$ for the duality product between $ (W^{1,q'}_\diamond(\Omega))^*$ ($q>1$) and $W^{1,q'}_\diamond(\Omega)$.  The following surjectivity result holds true.
\begin{lemma}\label{div_surj}
For any $f\in (W^{1,q'}_\diamond(\Omega))^*$ the weak formulation of
\begin{equation}\label{eqcondivergencia} \mbox{{\rm div}}(F) = f \hspace{0.2cm} \mbox{{\rm in} } \; \Omega, \hspace{0.2cm} 
			       			F \cdot n  = 0  \hspace{0.4cm} \mbox{{\rm in} } \; \partial \Omega, \hspace{0.4cm} \mbox{{\rm i.e.}} \hspace{0.2cm} \; \; -\int_{\Omega} F(x)\cdot \nabla \varphi(x)\, \dd x= \llangle f, \varphi\rrangle \end{equation}
for all $\varphi \in W^{1,q'}_\diamond(\Omega)$,					
has at least one solution $F \in L^{q}(\Omega)^{d}$.
\end{lemma}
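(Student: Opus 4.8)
The plan is to reformulate the statement as the surjectivity of an adjoint operator and deduce it from the closed-range theorem, thereby avoiding any elliptic regularity theory. First I would introduce the bounded linear operator $T:=\nabla:W^{1,q'}_\diamond(\Omega)\to L^{q'}(\Omega)^d$, $\varphi\mapsto\nabla\varphi$, which satisfies $\|T\varphi\|_{L^{q'}}\le\|\varphi\|_{W^{1,q'}}$. Since $1<q'<\infty$ we may identify $\left(L^{q'}(\Omega)^d\right)^*$ with $L^q(\Omega)^d$, and for $F\in L^q(\Omega)^d$ and $\varphi\in W^{1,q'}_\diamond(\Omega)$ the adjoint acts by $\langle T^*F,\varphi\rangle=\int_\Omega F\cdot\nabla\varphi\,\dd x$. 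Hence the weak formulation \eqref{eqcondivergencia} is exactly the identity $T^*(-F)=f$ in $\left(W^{1,q'}_\diamond(\Omega)\right)^*$, and producing a solution $F\in L^q(\Omega)^d$ is equivalent to showing that $T^*$ is onto. I would also note that the boundary condition $F\cdot n=0$ is automatically encoded, since the admissible test functions $\varphi$ need not vanish on $\partial\Omega$.

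The key step is to prove that $T$ is bounded below. By the Poincar\'e--Wirtinger inequality (the same inequality already invoked in the proof of Theorem \ref{existence}), valid on the bounded smooth domain $\Omega$, there is $C_P>0$ with $\|\varphi\|_{L^{q'}}\le C_P\|\nabla\varphi\|_{L^{q'}}$ for every $\varphi\in W^{1,q'}_\diamond(\Omega)$. Combining this with the definition of the $W^{1,q'}$-norm yields a constant $c>0$ such that $\|T\varphi\|_{L^{q'}}=\|\nabla\varphi\|_{L^{q'}}\ge c\|\varphi\|_{W^{1,q'}}$ for all such $\varphi$; in particular $T$ is injective and its range is closed.

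Finally I would invoke the closed-range theorem of Banach, in the form stating that a bounded linear operator between Banach spaces is bounded below if and only if its adjoint is surjective. Applying it to $T$, we obtain that $T^*:L^q(\Omega)^d\to\left(W^{1,q'}_\diamond(\Omega)\right)^*$ is onto, so for the given $f$ there is $G\in L^q(\Omega)^d$ with $T^*G=f$; setting $F:=-G$ solves \eqref{eqcondivergencia}. I expect the only substantive point to be the bounded-below estimate, that is, the Poincar\'e--Wirtinger inequality, which is where the hypotheses on $\Omega$ enter, while the remaining steps are routine functional analysis; some minor care is needed only to identify $T^*$ correctly and to keep track of the sign in passing from $G$ to $F$.
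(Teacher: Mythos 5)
Your argument is correct, but it takes a genuinely different route from the paper. The paper proceeds variationally: it minimizes $\varphi\mapsto\frac{1}{q'}\int_\Omega|\nabla \varphi|^{q'}\,\dd x-\llangle f,\varphi\rrangle$ over $W^{1,q'}_\diamond(\Omega)$ (existence and uniqueness by strict convexity, coercivity and weak lower semicontinuity), and then reads off the solution explicitly from the Euler--Lagrange equation of the Neumann $q'$-Laplacian as $F=-|\nabla u|^{q'-2}\nabla u\in L^{q}(\Omega)^d$. You instead recast the problem as surjectivity of the adjoint of $T=\nabla:W^{1,q'}_\diamond(\Omega)\to L^{q'}(\Omega)^d$ and apply the closed-range theorem (in the standard form: $T$ bounded below $\Leftrightarrow$ $T^*$ onto, e.g.\ Brezis, Theorem 2.21); your identification of $T^*$ and of the sign is correct, and the reflexivity needed to identify $\bigl(L^{q'}\bigr)^*$ with $L^{q}$ holds since $1<q'<\infty$. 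Note that both proofs ultimately rest on the same substantive input, the Poincar\'e--Wirtinger inequality on $W^{1,q'}_\diamond(\Omega)$ (which the paper also uses to get coercivity, and which tacitly requires $\Omega$ connected --- an assumption the paper itself makes implicitly when invoking Poincar\'e in Theorem \ref{existence}). What each approach buys: the paper's construction produces a canonical, explicit representative $F$ tied to a potential $u$, which fits naturally alongside Lemma \ref{regularity}; yours is shorter and purely functional-analytic, and via the open mapping theorem it automatically yields a solution with $\|F\|_{L^q}\le C\,\|f\|_{(W^{1,q'}_\diamond(\Omega))^*}$, though without an explicit formula.
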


\begin{proof}
%
Let us consider the problem 
$$\min_{u\in W^{1,q'}_\diamond}\frac{1}{q'}\int_\Omega|\nabla u|^{q'} \dd x-\llangle f,u \rrangle.$$
Since the cost function is strictly convex, coercive and weakly lower semicontinuous, we have the existence of a unique $u\in  W^{1,q'}_\diamond(\Omega)$ such that  
$$\int_{\Omega} |\nabla u(x)|^{q'-2}\nabla u(x) \cdot \nabla \phi(x) \,\dd x= \llangle f, \varphi\rrangle \; \; \; \forall \; \varphi \in W^{1,q'}_\diamond(\Omega).$$
%
%
The result follows by defining $F=-|\nabla u|^{q'-2}\nabla u \in L^{q}(\Omega)^{d}$. 
%
\end{proof}
Now, given $f\in (W^{1,q'}_\diamond(\Omega))^*$, let us consider the equation 
\begin{equation}\label{edpneumanngenerica}
  -\Delta m = f \hspace{0.4cm} \mbox{in $\Omega$}, \hspace{0.2cm} \nabla m \cdot n= 0 \hspace{0.4cm} \mbox{in $\partial \Omega$}.
\end{equation}
We say that $m\in  W^{1,q}(\Omega)$ is a weak solution of \eqref{edpneumanngenerica} if 
\begin{equation}\label{equationweak}\begin{array}{rcl} \ds  \int_{\Omega} \nabla m(x) \nabla \varphi(x)\, \dd x &=&  \llangle f, \varphi \rrangle \hspace{0.3cm} \forall \; \varphi \in W^{1,q'}_\diamond(\Omega).
\end{array}\end{equation}
\begin{lemma}\label{regularity} Assume that $q>d$ and let  $a\in\mathbb{R}$. Then, there exists a unique weak solution of \eqref{edpneumanngenerica} satisfying  that  $\ds\int_{\Omega} m\, \dd x =a$.  Moreover,  there exists a constant $c>0$, independent of $(a,f)$, such that for any  $F$ solving  \eqref{eqcondivergencia} we have that 
\begin{equation}\label{estimategiacz} \| \nabla m \|_{L^{q}} \leq c \| F\|_{L^{q}}.\end{equation}
\end{lemma}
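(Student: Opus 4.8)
The plan is to reduce the whole statement to the prototypical Calder\'on--Zygmund estimate for the Neumann Laplacian in divergence form and to settle uniqueness by a direct energy argument. Since adding a constant to $m$ affects neither the gradient nor the weak formulation \eqref{equationweak}, I would first reduce to the case $a=0$: once a solution $m_0\in W^{1,q}_\diamond(\Omega)$ is produced, the function $m:=m_0+a/|\Om|$ satisfies $\int_\Om m\,\dd x=a$, solves the same equation, and leaves the estimate \eqref{estimategiacz} unchanged.

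For uniqueness, suppose $m_1,m_2\in W^{1,q}(\Omega)$ both solve \eqref{equationweak} with $\int_\Om m_i\,\dd x=a$, and set $w:=m_1-m_2\in W^{1,q}_\diamond(\Omega)$, so that $\int_\Om \nabla w\cdot\nabla\varphi\,\dd x=0$ for every $\varphi\in W^{1,q'}_\diamond(\Omega)$. Because $q>d\ge 2$ forces $q'=q/(q-1)<2<q$, boundedness of $\Om$ gives the inclusion $W^{1,q}_\diamond(\Omega)\hookrightarrow W^{1,q'}_\diamond(\Omega)$, so $w$ is itself an admissible test function; taking $\varphi=w$ yields $\int_\Om |\nabla w|^2\,\dd x=0$ (the integral being finite since $L^q\hookrightarrow L^2$ on $\Om$). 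Hence $\nabla w=0$, $w$ is constant, and $\int_\Om w\,\dd x=0$ forces $w=0$.

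For existence together with the estimate, I would start from the vector field $F\in L^q(\Omega)^d$ provided by Lemma \ref{div_surj}, so that $\llangle f,\varphi\rrangle=-\int_\Om F\cdot\nabla\varphi\,\dd x$ and the weak formulation \eqref{equationweak} becomes $\int_\Om(\nabla m+F)\cdot\nabla\varphi\,\dd x=0$ for all $\varphi\in W^{1,q'}_\diamond(\Omega)$. Since $F\in L^q\hookrightarrow L^2$, the functional $\varphi\mapsto -\int_\Om F\cdot\nabla\varphi\,\dd x$ is continuous on the Hilbert space $W^{1,2}_\diamond(\Omega)$, and Lax--Milgram (equivalently, minimizing $\varphi\mapsto\frac12\int_\Om|\nabla\varphi|^2\,\dd x+\int_\Om F\cdot\nabla\varphi\,\dd x$) produces a unique $m_0\in W^{1,2}_\diamond(\Omega)$ with $\int_\Om\nabla m_0\cdot\nabla\varphi\,\dd x=-\int_\Om F\cdot\nabla\varphi\,\dd x$ for all $\varphi\in W^{1,2}_\diamond(\Omega)$. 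The decisive step is then to upgrade the integrability: invoking the Calder\'on--Zygmund theory for the Neumann problem on the smooth domain $\Om$, a right-hand side in divergence form with datum $F\in L^q$ yields $\nabla m_0\in L^q(\Omega)^d$ together with the a priori bound $\|\nabla m_0\|_{L^q}\le c\|F\|_{L^q}$, where $c=c(q,d,\Om)$ is independent of the data. Finally, as $q'<2$ the space $W^{1,2}_\diamond(\Omega)$ is dense in $W^{1,q'}_\diamond(\Omega)$, and once $\nabla m_0\in L^q$ both sides of the identity are continuous in $\varphi$ for the $W^{1,q'}$-norm; the weak formulation thus extends from $W^{1,2}_\diamond$ to all of $W^{1,q'}_\diamond(\Omega)$, so $m_0$ is the sought weak solution.

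The main obstacle is precisely this Calder\'on--Zygmund / elliptic regularity step: passing from the $L^2$-theory to the $L^q$-bound $\|\nabla m_0\|_{L^q}\le c\|F\|_{L^q}$ for the Neumann Laplacian relies essentially on the smoothness of $\partial\Om$ and on the boundary version of the singular-integral estimates, and it is this ingredient, quoted from the literature, that carries the weight of the statement. The remaining assertions then follow at once; in particular the bound holds for \emph{any} $F$ solving \eqref{eqcondivergencia}, since the normalized solution $m_0$ depends only on $f$, so applying the estimate to each admissible representation $F$ of $f$ gives $\|\nabla m_0\|_{L^q}\le c\|F\|_{L^q}$ with the same constant.
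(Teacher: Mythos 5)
Your proof is correct and follows essentially the same route as the paper's sketch: reduction to $a=0$, Lax--Milgram in $W^{1,2}_\diamond(\Omega)$, and the Calder\'on--Zygmund/Stampacchia gradient estimate for divergence-form data with Neumann conditions (which the paper obtains via the Dirichlet case plus local regularity and reflection). You are in fact somewhat more careful than the paper on two points its sketch glosses over, namely uniqueness within the $W^{1,q}$ class via the energy argument and the density step extending the weak formulation from $W^{1,2}_\diamond(\Omega)$ to $W^{1,q'}_\diamond(\Omega)$ test functions.
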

\begin{proof}[Sketch of the proof:]
Noticing that     \eqref{equationweak} is invariant if a constant is added to   $m$, it suffices to prove the result for $a=0$. 
Since $q>d$ we have that $q'<2$  and so, by the Lax-Milgram theorem,  existence and uniqueness  for   \eqref{equationweak} holds in $W^{1,2}_\diamond(\Omega)$. Using  interpolation results due to  Stampacchia (see \cite{stampacchia1} and \cite{stampacchia2}), estimate \eqref{estimategiacz} holds if Dirichlet-boundary conditions were considered (see e.g. \cite[Theorem 7.1]{giaquinta-martinazzi}). This argument yields the desired local regularity for $m$, which can be extended up to the boundary (which we recall that it is assumed to be regular) using classical reflexion arguments.

\end{proof}
Finally let us recall the following result about elliptic equations with measure data. 
\begin{theorem}[\cite{mingione2}, Theorem 1.2]\label{reg_frac_sob}
Let $f\in\fM(\ov{\Omega}).$ Then the unique solution $u\in W_0^{1,1}(\Om)$ of the problem 
\begin{equation}\label{eq_meas}
-\Delta u = f  \; \; \iin\ \Om, \hspace{0.3cm} u=0  \; \; \oon\ \partial\Om 
\end{equation}
has the following regularity properties:\smallskip\\
{\rm(i)} $\nabla u\in W_{\rm loc}^{1-\e,1}(\Om)^d,$ for all $\e\in(0,1)$.\smallskip\\
{\rm(ii)}  More generally,  $\nabla u\in W_{\rm loc}^{\frac{\s(r)-\e}{r},r}(\Om)^d$, for all $\e\in(0,\s(r))$ where $1\le r<\frac{d}{d-1}$ and $\s(r):=d-r(d-1).$
\end{theorem}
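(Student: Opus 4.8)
The plan is to exploit the explicit fundamental solution of the Laplacian and reduce the claimed fractional differentiability of $\nabla u$ to a sharp finite-difference estimate for a Riesz-type potential of the measure $f$. Since both assertions are local and interior, I would first localize: fix a ball $B_{2\rho}(x_0)$ whose closure lies in $\Om$, write $B_{2\rho}:=B_{2\rho}(x_0)$, and split $f=f\mres B_{2\rho}+f\mres(\Om\setminus B_{2\rho})$. Setting $u_1:=G\ast(f\mres B_{2\rho})$, where $G$ is the fundamental solution of $-\Delta$ (so that $\nabla G$ is positively homogeneous of degree $1-d$, with $|\nabla G(z)|\lesssim|z|^{1-d}$ and $|\nabla^2G(z)|\lesssim|z|^{-d}$), the remainder $w:=u-u_1$ solves $-\Delta w=f\mres(\Om\setminus B_{2\rho})$, which carries no mass in $B_{2\rho}$. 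Hence $w$ is harmonic, thus $C^\infty$, in $B_\rho$ and contributes nothing to the fractional regularity. For the Laplacian the $W_0^{1,1}$-solution is represented by the Green potential, so this decomposition is legitimate and it suffices to estimate $\nabla u_1=\nabla G\ast(f\mres B_{2\rho})$ on $B_\rho$.

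The core of the argument is a bound on the $L^r$-norm of the increments $\tau_h(\nabla u_1)(x):=\nabla u_1(x+h)-\nabla u_1(x)$. Writing $K_h(z):=\nabla G(z+h)-\nabla G(z)$, Minkowski's integral inequality gives
\begin{equation*}
\|\tau_h(\nabla u_1)\|_{L^r(B_\rho)}\le \int_{\ov\Om}\|K_h(\cdot-y)\|_{L^r(\R^d)}\,\dd|f|(y)=\|K_h\|_{L^r(\R^d)}\,|f|_{TV}.
\end{equation*}
The homogeneity of $\nabla G$ yields the scaling identity $K_h(|h|\zeta)=|h|^{1-d}K_{h/|h|}(\zeta)$, whence
\begin{equation*}
\|K_h\|_{L^r(\R^d)}=C\,|h|^{\frac{d}{r}-(d-1)}=C\,|h|^{\frac{\s(r)}{r}},\qquad C:=\|K_{h/|h|}\|_{L^r(\R^d)},
\end{equation*}
where the constant $C$ is finite and independent of $h$ precisely when $1<r<d/(d-1)$: integrability of $|K_{h/|h|}|^r$ near its two singularities needs $(1-d)r+d>0$, while integrability at infinity (where $K_{h/|h|}\sim\nabla^2 G$) needs $dr>d$. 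This produces the Nikolskii-type estimate $\|\tau_h(\nabla u_1)\|_{L^r(B_\rho)}\lesssim|h|^{\s(r)/r}\,|f|_{TV}$.

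To conclude assertion (ii) I would pass from this increment bound to the Gagliardo--Slobodeckij seminorm. Since $\nabla u_1\in L^r$ for $r<d/(d-1)$ by the classical Boccardo--Gallou\"et bound for measure data, and since
\begin{equation*}
[\nabla u_1]_{W^{s,r}(B_\rho)}^r\lesssim\int_{\{|h|\le \mathrm{diam}\,B_\rho\}}|h|^{-d-sr}\,\|\tau_h(\nabla u_1)\|_{L^r(B_\rho)}^r\,\dd h\lesssim\int|h|^{-d+(\s(r)-sr)}\,\dd h,
\end{equation*}
which converges exactly for $s<\s(r)/r$, taking $s=(\s(r)-\e)/r$ gives $\nabla u\in W^{(\s(r)-\e)/r,r}_{\rm loc}(\Om)$ for every $\e\in(0,\s(r))$. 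For the endpoint case (i), the bounded domain renders the behaviour at infinity harmless, but integrability there is lost at $r=1$, producing a logarithmic factor $\|K_h\|_{L^1(B_R)}\lesssim|h|\,|\log|h||$; this still yields $\|\tau_h(\nabla u)\|_{L^1}\lesssim|h|^{1-\e}$ and hence $\nabla u\in W^{1-\e,1}_{\rm loc}(\Om)$ for every $\e\in(0,1)$.

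The main obstacle is to upgrade the point-mass heuristic into a bound that is uniform over arbitrary measures and carries the \emph{sharp} exponent $\s(r)/r$; this is exactly what the Minkowski reduction combined with the homogeneity scaling of $K_h$ delivers. The two delicate points are the precise admissible range $1\le r<d/(d-1)$, which is dictated by the near-singularity integrability $(1-d)r+d>0$ of $\nabla G$, and the borderline logarithmic loss at $r=1$ that forces the $\e$ appearing in the endpoint exponent $1-\e$.
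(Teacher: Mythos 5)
Your proposal is correct, but it takes a genuinely different route from the paper, which offers no proof at all: Theorem \ref{reg_frac_sob} is quoted verbatim from Mingione's work (\cite{mingione2}, Theorem 1.2), where it is established for general quasilinear equations of $p$-Laplacian type with coefficients, via comparison estimates with homogeneous problems and nonlinear potential-theoretic techniques at the level of De Giorgi iteration --- tools that are forced there because no fundamental solution exists in that generality. You instead exploit the linearity of the Laplacian: the splitting $u=u_1+w$ with $u_1=G\ast(f\mres B_{2\rho})$ and $w$ harmonic (hence smooth, by Weyl's lemma) in $B_{2\rho}$ is legitimate and, as you note, does not even require the Green representation, only that $-\Delta w$ vanishes in $B_{2\rho}$; the Minkowski inequality combined with the degree-$(1-d)$ homogeneity of $\nabla G$ then gives the sharp Nikolskii bound $\|\tau_h(\nabla u_1)\|_{L^r}\lesssim |h|^{\s(r)/r}|f|_{TV}$, and the conversion to the Gagliardo--Slobodeckij seminorm costs exactly the $\e$ in the statement. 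Your accounting of the admissible range is also right: near-singularity integrability forces $r<d/(d-1)$, tail integrability forces $r>1$, and at $r=1$ the compact support of $f\mres B_{2\rho}$ reduces matters to $\|K_h\|_{L^1(B_R)}\lesssim |h|\left(1+|\log|h||\right)$, which still yields $W^{1-\e,1}_{\rm loc}$; moreover the exponent $\s(r)/r$ is sharp, as the Dirac mass ($\nabla u\sim|x|^{1-d}$) shows. Two small points you should make explicit: the constant $C=\|K_{h/|h|}\|_{L^r(\R^d)}$ is independent of the direction $h/|h|$ because $G$ is radial, and in the seminorm estimate the increment norm should be taken over $\{x\in B_\rho: x+h\in B_\rho\}$ --- harmless here since $u_1$ is defined on all of $\R^d$ and your bound is global. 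What the two approaches buy is clear: yours is elementary and self-contained but strictly linear, and it silently bypasses the uniqueness issue (the paper's remark defers that to the renormalized-solution theory of \cite{dalmaso}, though your regularity argument applies verbatim to any $W^{1,1}_0$ distributional solution); Mingione's proof is far heavier but covers the nonlinear setting and is the reason the paper can cite a single reference.
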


\begin{remark}
We remark that the result about the uniqueness of the (renormalized) solution of the problem \eqref{eq_meas} can be found in \cite{dalmaso}. Moreover, since the regularity results in Theorem \ref{reg_frac_sob} are local, these remain true if we use homogeneous Neumann boundary conditions instead of Dirichlet ones. In this context the solution is unique up to an additive constant. 
\end{remark}

\bibliographystyle{elsarticle-num}
\bibliography{alparsilva}{}

\begin{thebibliography}{10}
\expandafter\ifx\csname url\endcsname\relax
  \def\url#1{\texttt{#1}}\fi
\expandafter\ifx\csname urlprefix\endcsname\relax\def\urlprefix{URL }\fi
\expandafter\ifx\csname href\endcsname\relax
  \def\href#1#2{#2} \def\path#1{#1}\fi

\bibitem{BB}
J.-D. Benamou, Y.~Brenier, A computational fluid mechanics solution to the
  {M}onge-{K}antorovich mass transfer problem, Numer. Math. 84~(3) (2000)
  375--393.

\bibitem{LasLio06i}
J.-M. Lasry, P.-L. Lions, Jeux \`a champ moyen {I}. {L}e cas stationnaire, C.
  R. Math. Acad. Sci. Paris 343 (2006) 619--625.

\bibitem{LasLio06ii}
J.-M. Lasry, P.-L. Lions, Jeux \`a champ moyen {II}. {H}orizon fini et
  contr\^ole optimal, C. R. Math. Acad. Sci. Paris 343 (2006) 679--684.

\bibitem{LasLio07}
J.-M. Lasry, P.-L. Lions, Mean field games, Jpn. J. Math. 2 (2007) 229--260.

\bibitem{HuaMalCai}
M.~Huang, R.~P. Malham{\'e}, P.~E. Caines, Large population stochastic dynamic
  games: closed-loop {M}c{K}ean-{V}lasov systems and the {N}ash certainty
  equivalence principle, Commun. Inf. Syst. 6~(3) (2006) 221--251.

\bibitem{curslions}
P.-L. Lions, \textit{Cours au {C}oll\`ege de {F}rance}, {\rm
  www.college-de-france.fr}.

\bibitem{Car_not}
P.~Cardaliaguet, Notes on mean field games, (from P.-L. Lions' lectures at
  Coll\`ege de France).

\bibitem{GomSau14}
D.~Gomes, J.~Sa{\'u}de, Mean field games models---a brief survey, Dyn. Games
  Appl. 4~(2) (2014) 110--154.

\bibitem{Car13}
P.~Cardaliaguet, Weak solutions for first order mean field games with local
  coupling, preprint.

\bibitem{Gra14}
P.~J. Graber, Optimal control of first-order {H}amilton-{J}acobi equations with
  linearly bounded {H}amiltonian, Appl. Math. Optim. 70~(2) (2014) 185--224.

\bibitem{CarGra}
P.~Cardaliaguet, P.~J. Graber, Mean field games systems of first order, ESAIM:
  COCV, to appear.

\bibitem{CarGraPorTon}
P.~Cardaliaguet, P.~J. Graber, A.~Porretta, D.~Tonon, Second order mean field
  games with degenerate diffusion and local coupling, preprint.

\bibitem{CarCarNaz}
P.~Cardaliaguet, G.~Carlier, B.~Nazaret, Geodesics for a class of distances in
  the space of probability measures, Calc. Var. Partial Differential Equations
  48~(3-4) (2013) 395--420.

\bibitem{AchCamCap12}
Y.~Achdou, F.~Camilli, I.~Capuzzo-Dolcetta, Mean field games: numerical methods
  for the planning problem, SIAM J. Control Optim. 50~(1) ((2012)) 77--109.

\bibitem{LacSolTur}
A.~Lachapelle, J.~Salomon, G.~Turinici, Computation of mean field equilibria in
  economics, Math. Models Methods Appl. Sci. 20~(4) (2010) 567--588.

\bibitem{BenCar}
J.-D. Benamou, G.~Carlier, Augmented lagrangian methods for transport
  optimization, {M}ean-{F}ield {G}ames and degenerate {PDE}s, preprint.

\bibitem{CarLasLioPor}
P.~Cardaliaguet, J.-M. Lasry, P.-L. Lions, A.~Porretta, Long time average of
  mean field games with a nonlocal coupling, SIAM J. Control Optim. 51~(5)
  (2013) 3558--3591.

\bibitem{CarLasLioPor2}
P.~Cardaliaguet, J.-M. Lasry, P.-L. Lions, A.~Porretta, Long time average of
  mean field games, Netw. Heterog. Media 7~(2) (2012) 279--301.

\bibitem{GomPimSan15}
D.~A. Gomes, E.~A. Pimentel, H.~S{\'a}nchez-Morgado, Time-dependent mean-field
  games in the subquadratic case, Comm. Partial Differential Equations 40~(1)
  (2015) 40--76.

\bibitem{GomPirSan12}
D.~Gomes, G.~Pires, H.~S{\'a}nchez-Morgado, A-priori estimates for stationary
  mean-field games, Netw. Heterog. Media 7~(2) (2012) 303--314.

\bibitem{GomMit14}
D.~A. Gomes, H.~Mitake, Existence for stationary mean field games with
  quadratic hamiltonians with congestion, preprint.

\bibitem{GomPatVos14}
D.~A. Gomes, S.~Patrizi, V.~Voskanyan, On the existence of classical solutions
  for stationary extended mean field games, Nonlinear Anal. 99 (2014) 49--79.

\bibitem{GomSan14}
D.~Gomes, H.~S{\'a}nchez-Morgado, A stochastic {E}vans-{A}ronsson problem,
  Trans. Amer. Math. Soc. 366~(2) (2014) 903--929.

\bibitem{MauRouSan1}
B.~Maury, A.~Roudneff-Chupin, F.~Santambrogio, A macroscopic crowd motion model
  of gradient flow type, Math. Models Methods Appl. Sci. 20~(10) (2010)
  1787--1821.

\bibitem{MauRouSan2}
B.~Maury, A.~Roudneff-Chupin, F.~Santambrogio, Congestion-driven dendritic
  growth, Discrete Contin. Dyn. Syst. 34~(4) (2014) 1575--1604.

\bibitem{BurDiFMarWol}
M.~Burger, M.~Di~Francesco, P.~A. Markowich, M.-T. Wolfram, Mean field games
  with nonlinear mobilities in pedestrian dynamics, Discrete Contin. Dyn. Syst.
  Ser. B 19~(5) (2014) 1311--1333.

\bibitem{filippo}
F.~Santambrogio, A modest proposal for {MFG} with density constraints, Netw.
  Heterog. Media 7~(2) (2012) 337--347.

\bibitem{BonSha}
J.~Bonnans, A.~Shapiro, Perturbation analysis of optimization problems,
  Springer-Verlag, New York, (2000).

\bibitem{mingione2}
G.~Mingione, The {C}alder\'on-{Z}ygmund theory for elliptic problems with
  measure data, Ann. Sc. Norm. Super. Pisa Cl. Sci. (5) 6~(2) (2007) 195--261.

\bibitem{Schw66}
L.~Schwartz, Th\'eorie des distributions, Publications de l'Institut de
  Math\'ematique de l'Universit\'e de Strasbourg, No. IX-X. Nouvelle \'edition,
  enti\'erement corrig\'ee, refondue et augment\'ee, Hermann, Paris, (1966).

\bibitem{filippobook15}
F.~Santambrogio, Optimal transport for applied mathematicians: Calculus of
  variations, PDEs and Modeling, To appear, (2015).

\bibitem{adams}
R.~A. Adams, Sobolev spaces, Academic Press New York-London, (1975), pure and
  Applied Mathematics, Vol. 65.

\bibitem{ekeland-temam}
I.~Ekeland, R.~T\'emam, Convex analysis and variational problems, Translated
  from the French. Studies in Mathematics and its Applications, Vol. 1.
  North-Holland Publishing Co., Amsterdam-Oxford; American Elsevier Publishing
  Co., Inc., New York, 1976.

\bibitem{stampacchia3}
G.~Stampacchia, Le probl\`eme de {D}irichlet pour les \'equations elliptiques
  du second ordre \`a coefficients discontinus, Ann. Inst. Fourier (Grenoble)
  15~(fasc. 1) (1965) 189--258.

\bibitem{BarPor}
G.~Barles, A.~Porretta, Uniqueness for unbounded solutions to stationary
  viscous {H}amilton-{J}acobi equations, Ann. Sc. Norm. Super. Pisa Cl. Sci.
  (5) 5~(1) ((2006)) 107--136.

\bibitem{gilbarg}
D.~Gilbarg, N.~Trudinger, Elliptic partial differential equations of second
  order, 2nd edition, Springer Verlag, Berlin, 1983.

\bibitem{giaquinta-martinazzi}
M.~Giaquinta, L.~Martinazzi, An introduction to the regularity theory for
  elliptic systems, harmonic maps and minimal graphs, Edizioni della Normale,
  Pisa, 2012.

\bibitem{brezis}
H.~Br{\'e}zis, Int\'egrales convexes dans les espaces de {S}obolev, in:
  Proceedings of the {I}nternational {S}ymposium on {P}artial {D}ifferential
  {E}quations and the {G}eometry of {N}ormed {L}inear {S}paces ({J}erusalem,
  1972), Vol.~13, 1972, pp. 9--23.

\bibitem{attouch}
H.~Attouch, G.~Buttazzo, G.~Michaille, Variational analysis in Sobolev and BV
  spaces. Applications to PDEs and optimization, MPS/SIAM Series on
  Optimization, (SIAM), Philadelphia, (2006).

\bibitem{frac_sob}
E.~Di~Nezza, G.~Palatucci, E.~Valdinoci, Hitchhiker's guide to the fractional
  {S}obolev spaces, Bull. Sci. Math. 136~(5) (2012) 521--573.

\bibitem{stampacchia1}
G.~Stampacchia, $\mathcal{L}^{(p,\lambda)}$-spaces and interpolation, Comm.
  Pure. Appl. Math. 17 ((1964)) 293--306.

\bibitem{stampacchia2}
G.~Stampacchia, The spaces $\mathcal{L}^{(p,\lambda)}$, ${N}^{(p,\lambda)}$ and
  interpolation, Ann. Sc. Nor. Sup. Pisa 19(3) (1965) 443--462.

\bibitem{dalmaso}
G.~Dal~Maso, F.~Murat, L.~Orsina, A.~Prignet, Renormalized solutions of
  elliptic equations with general measure data, Ann. Scuola Norm. Sup. Pisa Cl.
  Sci. (4) 28~(4) (1999) 741--808.

\end{thebibliography}

\end{document}